\documentclass[11pt]{amsart}

\usepackage{amsmath, amssymb, amsthm, cite, etoolbox, float , hyperref, tikz, ytableau}
\usetikzlibrary{matrix}
\title[Polysymmetric Expansions in $h^\otimes$ and $e^\otimes$ and plethysm]{The $h$-expansion of the Plethysm $h_n[p_r]$ and Polysymmetric Expansions in $h^\otimes$ and $e^\otimes$}
\author{Aditya Khanna}
\date{}

\theoremstyle{plain}
\newtheorem{theorem}{Theorem}
\newtheorem{lemma}[theorem]{Lemma}
\newtheorem{prop}[theorem]{Proposition}
\newtheorem{corollary}[theorem]{Corollary}

\newtheorem{openproblem}[theorem]{Question}
\theoremstyle{definition}

\newtheorem{example}[theorem]{Example}
\newtheorem{remark}[theorem]{Remark}

\newcommand{\boks}[1]{\ytableausetup{boxsize = #1 cm}}
\newcommand{\y}[1]{\ydiagram{#1}}
\newcommand{\yt}[1]{\ytableaushort{#1}}
\newcommand{\midyt}{\ytableausetup{aligntableaux = center}}

\newcommand{\yg}[2]{\ydiagram{#1}*[*(lightgray)]{#2}}

\newcommand{\ZZ}{\mathbb{Z}}
\newcommand{\QQ}{\mathbb{Q}}

\renewcommand{\SS}{\mathfrak{S}}

\newcommand{\al}{\alpha}
\newcommand{\be}{\beta}
\newcommand{\ga}{\gamma}
\newcommand{\la}{\lambda}
\newcommand{\de}{\delta}
\newcommand{\vn}{\varnothing}
\newcommand{\ox}{\otimes}
\newcommand{\si}{\sigma}

\newcommand{\x}{\textbf{x}}
\newcommand{\inv}{\mathtt{inv}}

\newcommand{\mcL}{\mathcal{L}}

\DeclareMathOperator{\mcM}{\mathcal{M}}

\DeclareMathOperator{\Sym}{Sym}
\DeclareMathOperator{\sym}{Sym}
\DeclareMathOperator{\PSym}{PSym}
\DeclareMathOperator{\psym}{PSym}
\DeclareMathOperator{\Typ}{Typ}
\DeclareMathOperator{\Par}{Par}
\DeclareMathOperator{\XCom}{OTyp} %idk
\DeclareMathOperator{\xcom}{OTyp} %idk
\DeclareMathOperator{\Com}{Com}
\DeclareMathOperator{\WCom}{WCom}

\DeclareMathOperator{\ab}{Abc}
\DeclareMathOperator{\abc}{\ab}
\DeclareMathOperator{\wt}{wt}
\DeclareMathOperator{\bdg}{bdg}
\DeclareMathOperator{\dg}{dg}
\DeclareMathOperator{\sgn}{sgn}
\DeclareMathOperator{\psgn}{psgn}
\DeclareMathOperator{\sort}{sort}
\DeclareMathOperator{\psort}{psort}
\DeclareMathOperator{\area}{area}
\DeclareMathOperator{\height}{ht}

\DeclareMathOperator{\ocon}{ocon} %ordered content
\DeclareMathOperator{\pocon}{pocon} %ordered content

\DeclareMathOperator{\rot}{rot}  %rotation
\DeclareMathOperator{\comrev}{comrev}

\DeclareMathOperator{\wrap}{wrap}
\DeclareMathOperator{\pwrap}{pwrap}

\let\bbmatrix\bordermatrix
\patchcmd{\bbmatrix}{8.75}{8.75}{}{}
\patchcmd{\bbmatrix}{\left(}{\left[}{}{}
\patchcmd{\bbmatrix}{\right)}{\right]}{}{}

\begin{document}
\begin{abstract}
The algebra of polysymmetric functions ($\psym$) is defined as the tensor product of copies of the algebra of symmetric functions ($\sym$) where the $i$th copy has variables scaled by $i$. One way to construct a basis of $\psym$ is to start with a basis $\{f_\la\}$ of $\sym$ and consider all pure tensors arising from this basis. Asvin G and Andrew O'Desky described four families of non-pure tensor bases, namely $H, E, E^+$ and $P$, which we call plethystic bases. In this paper, we study the expansions of the plethystic bases into the pure-tensor bases $\{h^\otimes_\tau\}$ and $\{e^\ox_\tau\}$, and interpret the results via combinatorial objects called polywrapping block tabloids. The expansions of $H, E$ and $E^+$ in $h^\otimes$ and $e^\otimes$ are found via the $h$-expansion of the plethysm $h_n[p_r]$. We compute this $h$-expansion using a statistic on words and prove it through abacus methods.
\end{abstract}
\maketitle
\section{Introduction}

The combinatorics of the $\QQ$-algebra of symmetric functions, Sym, has been studied extensively. The underlying $\QQ$-vector space of Sym comes equipped with some notable bases namely the $h$-basis, the $e$-basis, the $p$-basis, the $m$-basis and the $s$-basis. We define these bases in Section \ref{ssec:intro1}. The coefficients obtained when expressing one basis into another can be computed via combinatorial methods involving objects called tableaux and tabloids. We visualize these combinatorial objects as (possibly labeled) tilings of certain arrays of boxes. The tableau and tabloid descriptions of the change-of-basis coefficients have been explored in the works of Remmel and E\u{g}ecio\u{g}lu \cite{eg-rem, inv-kostka} among others. The interpretations involving tabloids and tableaux have also been useful in computing the change-of-basis coefficients for generalizations of symmetric functions such as quasisymmetric functions \cite{qsym-book} and noncommutative symmetric functions \cite{noncomm}. For a historical overview of tabloid approaches in Sym, see \cite[Sec. 2.4.7]{khanna-thesis}.

In this paper, we present combinatorial descriptions for a new generalization of Sym called the algebra of polysymmetric functions (PSym) by Asvin G and Andrew O'Desky \cite{psym}. An element of PSym is a countable tensor product of symmetric functions with finitely many factors having non-zero degree such that the symmetric function in the $i$th tensor factor has the degrees of its variables scaled by $i$. In \cite{psym}, the authors describe pure-tensor bases arising from the aforementioned notable bases of Sym. We call these the $h^\ox$-basis, the $e^\ox$-basis, the $p^\ox$-basis, the $m^\ox$-basis, and the $s^\ox$-basis. They also describe four bases which do not arise via straightforward tensor products, namely the $H$-basis, the $E$-basis, the $E^+$-basis, and the $P$-basis. We call these the \textit{plethystic bases} and define them in Section \ref{ssec:psym}.

In the paper that introduces polysymmetric functions \cite{psym}, G and O'Desky provide an interpretation for the $m^\ox$-expansion of the $H$-basis using certain tilings they call arrangements. In later work, Khanna and Loehr \cite{KLpsym} provide combinatorial interpretations of the coefficients that appear when expanding the four plethystic bases into the $m^\ox$-basis, the $s^\ox$-basis, and the $p^\ox$-basis. The combinatorial objects that appear in their work are formal tensor products of tabloids. They use inductive methods (polysymmetric analogs of the Pieri rules) to construct these objects and compute the change-of-basis coefficients. Further work by Khanna~\cite{khanna-bij} computes the change-of-basis coefficients between all plethystic bases via bijective (sign-reversing involutions) and inductive arguments. An overview of the results from  \cite{khanna-bij} and \cite{KLpsym} can be found in \cite{khanna-thesis}. A preprint by Martinez \cite{dm-psym} finds the change-of-basis coefficients between plethystic bases via generating function techniques.

In this paper, we combinatorially compute and prove the change-of-basis coefficients obtained while expressing the plethystic bases into the $h^\ox$-basis and the $e^\ox$-basis.
\subsection{Outline} We now briefly mention some of the results in the paper and reference the corresponding sections for ease of navigation. 
 \begin{itemize}
 \item In Section \ref{sec:bg}, we recall the concepts of compositions, partitions, symmetric functions, and plethysm. We also discuss the concepts of ordered types, types, and polysymmetric functions.
\item In Section \ref{sec:P-he-wrapping}, the $h^\ox$-basis and the $e^\ox$-basis expansions of the $P$-basis are described using constant wrapping block tabloids.
\end{itemize}  
For other plethystic bases, we first find the $h$-expansion of the plethysm $h_n[p_r]$. For a list of $r$ non-negative integers, compute $\hat{s}_r(w)$ as follows: point-wise add $(0,1,\ldots, r-1)$ to $w$, then find the entries modulo $r$. If the list thus obtained has any repeated entries, set $\hat{s}_r(w) = 0$. Otherwise, count the number of pairs of elements in this list in which the larger element is to the left of the smaller element.  Call this count $i$ and define $\hat{s}_r(w) = (-1)^i$. For example, if $w = (2,0,1)$, then $w + (0,1,2) = (2,1,3)$. Reducing the entries modulo 3, we find the list $(2,1,0)$. As 2 appears to the left of 1 and 0, and 1 appears to the left of 0, we find $i = 2 + 1 = 3$. So, $\hat{s}_3((2,0,1))  =-1$. Let $n,r$ be positive integers. For a partition $\mu$ of $nr$, define $\hat{s}_r(\mu)$ to be $\sum_{w} \hat{s}_r(w)$ where the sum is over lists $w$ of $r$ non-negative integers for which the multiset of positive parts is the multiset of parts of $\mu$.

 We prove (Theorem \ref{thm:h-exp-of-hnpr}) that
\[
h_n[p_r] = \sum_{\mu} \hat{s}_r(\mu) h_\mu,
\]
where the sum is over all partitions $\mu$ of $nr$.
\begin{itemize}
\item In Section \ref{ssec:Hh-Ee}, the $h^\ox$-basis expansion of the $H$-basis and the $e^\ox$-basis expansion of the $E$-basis are described using polywrapping block tabloids.
\item In Section \ref{ssec:He-Eh}, the $h^\ox$-basis expansion of the $E$-basis and the $e^\ox$-basis expansion of the $H$-basis are described using labeled polywrapping block tabloids.
\item In Section \ref{ssec:Ue-Uh}, the $h^\ox$-basis and the $e^\ox$-basis expansions of the $E^+$-basis are described using labeled and unlabled polywrapping block tabloids respctively.
\item In Section \ref{sec:misc-open}, we expand upon some concepts discussed in the paper, and we analyze some properties of the $\hat{s}_r$ function as a statistic over the set of lists of non-negative integers. We also pose some open problems and future directions.
\end{itemize}

\section{AI and Technology Statement}
No AI or Large Language Models of any kind were used in the conception, ideation, writing, code writing, or editing of this project. All the thought, proof-writing, and editing was done solely by the author of this paper. The only use of an LLM (ChatGPT 5.6 Luna) was in finding the term ``rectangular content'' (used in Lemma \ref{lem:rectangular-content}) while the citations in the Lemma were found via Google Scholar.

The examples in this document were computed by hand or via SageMath using custom code written by the author. The diagrams in this document were made in TikZ using custom Python code written by the author. All or any code can be made available upon request.
\section{Background}\label{sec:bg}
\subsection{Compositions, Partitions, and Symmetric Functions}\label{ssec:intro1}
\midyt
 We first recall the notions of compositions, partitions, symmetric functions, and plethysm which are prerequisite in understanding polysymmetric functions. Detailed treatments of the topics in this subsection can be found in \cite{macd, pleth-expose, loehr-book}.

Let $n$ be a non-negative integer. A \textit{composition} $\be$ of $n$ is a list of positive integers $\be = (\be_1,\be_2, \ldots, \be_l)$ such that $\be_1 + \be_2 + \ldots + \be_l = n$. We call $n$ the \textit{size} of $\be$ and denote it by $|\be|$.  We also use the notation $\area(\be)$ for the size of $\beta$. We call $l$ the \textit{length} of $\be$ and denote it by $\ell(\be)$. The \textit{diagram} $\dg(\be)$ of $\be$ is constructed by placing boxes in $l$ left-justified rows such that the $i$th row from top contains $\be_i$ boxes.
\boks{0.2}
\begin{example}
The list $\be = (3,4,3,1)$ is a composition of size $|\be| = 3 + 4 + 3 + 1 = 11$ and length $\ell(\be) = 4$. The diagram of $\be$ is $\dg(\be) = \y{3,4,3,1}$.
\end{example}
For a non-negative integer $n$, denote the set of compositions of $n$ by $\Com(n)$ where $\Com(0)$ is the singleton containing the empty composition $(0) = \varnothing$. Denote the set of all compositions by $\Com$.

A composition $\la = (\la_1, \la_2, \ldots, \la_l)$ with weakly decreasing entries $\la_1 \geq \la_2 \geq \ldots \geq \la_l$ is called a \textit{partition}. We call $\la_i$ for $i = 1,2,\ldots, l$ a \textit{part} of $\la$. For instance, $(4,3,3,1)$ is a partition of $11$ with parts 4, 3, 3, and 1. We denote the set of partitions of $n$ by $\Par(n)$ and the set of all partitions by $\Par$. The map $\sort:\Com\to \Par$ sends each composition $\beta$ to the unique partition $\sort(\be)$ formed by rearranging the entries of $\be$ in weakly decreasing order.

 A formal power series $f\in\QQ[[x_1, x_2, \ldots]]$ of bounded degree is called a \textit{symmetric function} if $f$ is preserved under any permutation of the indeterminates $x_1, x_2, \ldots$. We denote the set of all homogeneous symmetric functions of degree $n$ by $\Sym(n)$. The set of all symmetric functions is denoted by $\Sym$. $\sym$ is a graded $\QQ$-algebra where the degree $n$ component is the $\QQ$-vector space $\sym(n)$~\cite[I.2]{macd}.
\begin{example}
The formal power series $x_1^4 + x_2^4 + x_3^4 + \ldots + x_1^2 x_2 + x_2^2 x_1 + x_2^2 x_3 + x_3^2 x_2 + x_1^2 x_3 + x_3^2 x_1 + \ldots $ is a symmetric function in $\sym(4) \oplus \sym(3)$.
\end{example}
The partitions of $n$ index the basis elements of $\sym(n)$ \cite[Thm. 9.23]{loehr-book}. So the combinatorics of partitions is intimately tied to the algebra of symmetric functions. For any partition $\la = (\la_1,\la_2, \ldots, \la_l)$, define $\x^\la = x_1^{\la_1}x_2^{\la_2}\ldots x_l^{\la_l}$. Define $m_\la$ to be the sum of all distinct monomials obtained by permuting the indices of $\x^\la$ in all possible ways.
\begin{example}
We compute $m_{(2,2,1)} = x_1^2 x_2^2 x_3 + x_1 x_2^2 x_3^2 + x_1^2 x_2 x_3^2 + \ldots$.
\end{example}
Let $n$ be a positive integer. Define the \textit{$n$th complete homogeneous symmetric function} $h_n$ to be the sum of all monomials in $x_1, x_2,\ldots$ of degree $n$. We call a monomial \textit{square-free} if each variable appears exactly once as a factor of the monomial. Define the \textit{$n$th elementary symmetric function} $e_n$ to be the sum of all square-free monomials in $x_1, x_2,\ldots$ of degree $n$. Set $h_0= e_0 = 1$. Define the \textit{$n$th power-sum symmetric function} $p_n$ as the sum $x_1^n + x_2^n + \ldots$. The sets $\{h_n\}_{n\geq 0}$, $\{e_n\}_{n\geq 0}$, and $\{p_n\}_{n\geq 1}$ each generate $\sym$ as a $\QQ$-algebra~\cite[I.2 (2.8)]{macd}.

\begin{example}
We compute the following symmetric functions:
\begin{align*}
h_3 = & x_1^3 + x_2^3 + x_3^3 + \cdots +  x_1 x_2 x_3 + \cdots + \\& x_1^2 x_2 + x_1x_2^2 + x_1^2 x_3 + x_1x_3^2 + x_2^2x_3 + x_2x_3^2 + \cdots\\
e_3 = & x_1 x_2 x_3 + x_1 x_3 x_4 + x_1 x_2 x_4 + \cdots\\
p_3 = &x_1^3 + x_2^3 + x_3^3 + \cdots
\end{align*}
\end{example}

For a composition $\beta = (\beta_1, \beta_2, \ldots, \be_l)$, we define  $h_\be = h_{\be_1}h_{\be_2}\ldots h_{\be_l}$, $e_\be = e_{\be_1}e_{\be_2}\ldots e_{\be_l}$ and $p_\be = p_{\be_1}p_{\be_2}\ldots p_{\be_l}$. As $\sym$ is a commutative $\QQ$-algebra, the equalities $h_\beta = h_{\sort(\beta)}$, $e_\beta = e_{\sort(\beta)}$, and $p_\beta = p_{\sort(\beta)}$ hold. The sets $\{h_\la\}_{\la\in \Par(n)}$, $\{e_\la\}_{\la\in \Par(n)}$, and $\{p_\la\}_{\la\in \Par(n)}$ each form a $\QQ$-linear basis of $\sym(n)$~\cite[Thm. 9.66, 9.71, 9.79]{loehr-book}.

For a partition $\la = (\la_1, \la_2, \ldots, \la_l)$, define the \textit{Schur function} $s_\la$ using the Jacobi-Trudi identity \cite[10.60]{loehr-book}:
\[
s_\la = \det\begin{bmatrix}
h_{\la_1} & h_{\la_1 + 1} & \ldots & h_{\la_1 + l - 1}\\
h_{\la_2 - 1} & h_{\la_2} & \ldots & h_{\la_l + l - 2}\\
\vdots & \vdots & \ddots & \vdots\\
h_{\la_l - l + 1} & h_{\la_l - l + 2}& \ldots & h_{\la_l}
\end{bmatrix}
\]

We discuss a useful combinatorial interpretation of the Jacobi-Trudi identity by E\u{g}ecio\u{g}lu and Remmel \cite{inv-kostka} in Section \ref{ssec:h-exp-hnpr}.

An alternate description of Schur functions is obtained through the bialternant formula.
For a list $w$ of $l$ non-negative integers, define the $l\times l$ matrix $A_w$ such that the entry in row $i$ and column $j$ is $x_i^{w_j + l - j}$. Denote by $\vec{0}_l$, the list containing $l$ zeroes. The Jacobi's bilaternant formula \cite[I.3 (3.1)]{macd} states that for a partition $\la$, $s_\la = \frac{\det(A_\la)}{\det(A_{\vec{0}_l})}$, where we append $l - \ell(\la)$ trailing zeros to $\la$ if $\ell(\la) < l$.
\begin{remark}
The Schur function $s_\la$ is also defined as the generating function of the content statistic over semi-standard Young tableau of shape $\la$. We omit the discussion of this interpretation in this paper as the definition via the Jacobi-Trudi identity and the bilalternant formulation are of greater use to us.
\end{remark}

As $\Sym$ is a $\QQ$-algebra; sums, products, and rational multiples of symmetric functions are still symmetric functions. We now recall the operation of plethysm which may be informally understood as the extension of the notion of function composition to symmetric functions. The following paragraph summarizes the definitions and properties from \cite{pleth-expose} that we need. 

For an arbitrary symmetric function $f$ and a formal power series $g$, we call $f[g]$ the \textit{plethysm of $f$ with $g$}. We call $f$ the \textit{outer function} and $g$ the \textit{inner function}. Let $c$ be a rational number. For power-sum symmetric functions $p_m$ and $p_n$, $p_m[p_n] = p_{mn}$. For any arbitrary formal power series $g_1$ and $g_2$, $p_m[g_1 + g_2] = p_m[g_1] + p_m[g_2]$, $p_m[g_1g_2] = p_m[g_1]p_m[g_2]$ and $p_m[cg_1] =c p_m[g_1]$. For symmetric functions $f_1 $ and $f_2$ and an arbitrary formal power series $g$, $(f_1 + f_2)[g] = f_1[g] + f_2[g]$, $(f_1f_2)[g] = f_1[g]f_2[g]$ and $(cf_1)[g] = c\cdot f_1[g]$. We remark that $p_n[f] = f[p_n]$ for all $f\in \Sym$ and positive integers $n$~\cite[Thm. 3]{pleth-expose}. Also, $p_1[f] = f = f[p_1]$ \cite[Thm. 4]{pleth-expose}.

To compute a plethysm algebraically, we require  the following proposition.
\begin{prop}[I.8 (8.4) in \cite{macd}]\label{prop:monomial-sub}
 For all $f\in \Sym$ and $n\geq 1$, \[f[p_n](x_1,x_2, \ldots) = f(x_1^n, x_2^n,\ldots).\]
\end{prop}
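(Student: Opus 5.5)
The identity $f[p_n](x_1,x_2,\ldots)=f(x_1^n,x_2^n,\ldots)$ is linear in $f$, and both sides respect products: plethysm satisfies $(f_1f_2)[g]=f_1[g]f_2[g]$, and substituting $x_i\mapsto x_i^n$ is a ring homomorphism. So the plan is to reduce to the power sums, which generate $\Sym$ as a $\QQ$-algebra, verify the identity on them, and extend by linearity and multiplicativity.

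\textbf{Step 1 (algebraic generators).} Write an arbitrary $f\in\Sym$ as a $\QQ$-linear combination $f=\sum_\la c_\la p_\la$, using that $\{p_\la\}$ is a basis of $\Sym(k)$ in each degree $k$. Define the substitution map $\phi_n:\QQ[[x_1,x_2,\ldots]]\to\QQ[[x_1,x_2,\ldots]]$ by $x_i\mapsto x_i^n$. It is a $\QQ$-algebra homomorphism, and on bounded-degree series there is no convergence issue.

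\textbf{Step 2 (generators).} For a single power sum, the composition rule from \cite{pleth-expose} gives $p_m[p_n]=p_{mn}=\sum_i x_i^{mn}$. On the other side, $p_m(x_1^n,x_2^n,\ldots)=\sum_i (x_i^n)^m$. These agree, so $p_m[p_n]=\phi_n(p_m)$.

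\textbf{Step 3 (extension).} Both $f\mapsto f[p_n]$ and $f\mapsto \phi_n(f)$ are $\QQ$-linear and multiplicative. For plethysm this is exactly the outer-function properties $(f_1+f_2)[g]=f_1[g]+f_2[g]$, $(f_1f_2)[g]=f_1[g]f_2[g]$ and $(cf)[g]=c\,f[g]$ recalled above. Two algebra homomorphisms that agree on the generators $p_m$ agree everywhere. Hence
\[
f[p_n]=\sum_\la c_\la \prod_j p_{\la_j}[p_n]=\sum_\la c_\la\prod_j \phi_n(p_{\la_j})=\phi_n(f).
\]

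The only delicate point is not a computation but the foundations. Plethysm must be well defined as an algebra homomorphism in the outer variable, so the argument depends on the axioms from \cite{pleth-expose} being taken as the definition of plethysm. I would state explicitly that plethysm is the unique map satisfying those rules, namely $f[g]$ computed by expanding $f$ in the $p$-basis and applying $p_m[g]$. Under that reading, Step 3 is immediate and no further obstacle remains.
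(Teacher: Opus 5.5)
The paper gives no proof of its own here; it only cites \cite{macd}. Your argument is correct and complete. The map $f\mapsto f[p_n]$ is a $\QQ$-algebra homomorphism on $\Sym$ by the outer-function rules the paper recalls from \cite{pleth-expose}. The map $f\mapsto f(x_1^n,x_2^n,\ldots)$ is one because substitution $x_i\mapsto x_i^n$ is a ring homomorphism. The two maps agree on the generators $p_m$, since $p_m[p_n]=p_{mn}=\sum_i (x_i^n)^m$, so they agree on all of $\Sym$.

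Your route differs from the cited source in foundations, not substance. In the classical definition of plethysm underlying \cite{macd}, $f[g]$ is obtained, when $g$ is a sum of monomials, by substituting those monomials for the variables of $f$. With $g=p_n=x_1^n+x_2^n+\cdots$ the statement is then essentially the definition. You instead derive it from the rule-based description of plethysm that the paper actually sets up, which is the natural justification within the paper's framework. It is also exactly how the identity follows under the Loehr--Remmel definition, where $p_m[A]$ raises every variable of $A$ to the $m$th power and $f\mapsto f[A]$ is extended multiplicatively through the $p$-basis.

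The foundational worry in your last paragraph is unnecessary. You never use that plethysm is the \emph{unique} map satisfying those rules. You only use that $f\mapsto f[p_n]$ satisfies linearity, multiplicativity and $p_m[p_n]=p_{mn}$; agreement on generators then forces equality.
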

\begin{example}
We find
$h_3[p_4] = x_1^{12} + x_2^{12} + x_3^{12} + \cdots +  x_1^4 x_2^4 x_3^4 + \cdots + x_1^8 x_2^4 + x_1^4x_2^8 + x_1^8 x_3^4 + x_1^4x_3^8 + x_2^8x_3^4 + x_2^4x_3^8 + \cdots$
\end{example}
\subsection{Polysymmetric functions}\label{ssec:psym}
We now discuss the concepts of ordered types, splitting types, and polysymmetric functions. Polysymmetric functions were introduced by Asvin G and Andrew O'Desky in 2022 \cite{psym}. Detailed treatments of the topics discussed in this subsection can be found in \cite{psym, KLpsym,khanna-bij,khanna-thesis}

Define an \textit{ordered type} $\delta$ of $n$ as a list of compositions $(\be^{(1)},\be^{(2)},\ldots)$ such that $\sum_{i\geq 1} i \area(\be^{(i)}) = n$. We call $n$ the \textit{size} of $\delta$ and denote it by $|\delta|$. The \textit{length} $\ell(\delta)$ of $\delta$ is defined as $\sum_{i\geq 1} \ell(\be^{(i)})$.  The set of ordered types of $n$ is denoted by $\xcom(n)$ and the set of all ordered types is denoted by $\xcom$.

We write $\de|_k = \be^{(k)}$ and formally write $\delta = 1^{\delta|_1} 2^{\de|_2}\ldots$. We may choose to drop the parentheses or commas in the exponent $\delta|_i$ for clarity.
The positive integers $1,2, \ldots$  in the base of $\de$ are called \textit{degrees} while the entries of $\delta|_i$ in the exponents of $\de$ are called the \textit{associated multiplicities of $i$} for $i =1,2,\ldots$. For example, the object $\de = 1^{2,4,2} 2^{2,1} 4^{2,1,3,1}$ is an ordered type of $42$ with degrees 1, 2, and 4. The associated multiplicities for 1 are 2, 4, and 2, the associated multiplicities for 2 are 2 and 1, and the associated multiplicities for 4 are 2, 1, 3, and 1. 

If $\delta$ is such that $\delta|_d = (m)$ and $\delta|_i = \varnothing$ for all $i\neq d$, then we write $\delta = d^m$. Such a $\de$ is called a \textit{block} with degree $d$ and associated multiplicity $m$. We may view an ordered type $\delta$ as a list of $\de|_i$ for $i\geq 1$ where each $\delta|_i$ is an ordered collection of blocks with degree $i$. We list out the blocks of $\delta$ and call this list (without parentheses or commas) the \textit{block notation} for $\de$. In the preceding example, we may write $\delta$ in block notation as $1^2 1^4 1^2 2^2 2^1 4^2 4^1 4^3 4^1$.
\begin{remark}
Related objects known as polycompositions are the focal combinatorial object in the paper \cite{khanna-bij} that deals with transition matrices between plethystic bases. A polycomposition is a list of ordered collections of blocks with the same multiplicity rather than the same degree as in ordered types. Thus, in a sense, polycompositions are dual objects to ordered types. 
\end{remark}

The \textit{block diagram} $\bdg(\de)$ of an ordered type $\de$ is an ordered list of rectangular arrays of unit boxes, where for each appearance of $d^r$ in the block notation of $\de$, we place a rectangular array consisting of $d$ rows and $r$ columns.
\begin{example}
For $\delta = 1^{2,1,2} 2^{3} 3^2 4^{1,2}$,
\[
\bdg(\de) = \y{2} \: \y{1}\: \y{2} \: \y{3,3} \:
\y{2,2,2} \: \y{1,1,1,1} \: \y{2,2,2,2}.
\]
\end{example}

% We visualize the diagram $\dg(\delta)$ of $\delta$ as a formal tensor product of diagrams of $\delta|_i$.
%\begin{example}Let $\de = 1^{2,3,2} 2^{2,1} 4^{2,1,3,1}$ is an element of $\xcom(42)$ with $\ell(\de) = 9$. We visualize it as\[\dg(\de) = \y{2,3,2}\ox \y{2,1}\ox \vn \ox \y{2,1,3,1}.\]\end{example}

Let $\delta\in \xcom(n)$. If all $\delta|_i$ for $i\geq 1$ are partitions, then we call $\delta$ a \textit{splitting type}, or simply a \textit{type} of $n$. The set of types of $n$ is denoted by $\Typ(n)$ and the set of all types is denoted by $\Typ$. For example, $1^{4,2,2}2^{2,1} 4^{3,2,1,1}$ is a type of 42. Define $\psort:\xcom(n) \to \Typ(n)$ to be the map that sends an ordered type to the type obtained by replacing each composition $\delta|_i$ with $\sort(\delta|_i)$. For instance, $\psort(1^{2,4,2} 2^{2,1} 4^{2,1,3,1}) = 1^{4,2,2}2^{2,1}4^{3,2,1,1}$.

With the perspective of block notation, a type is to be viewed as a multiset of blocks. We often implicitly translate between block notation for types and ordered types, and the formal notation with compositions or partitions in exponents.

%--- 6 Aug 2026

In order to define polysymmetric functions, we first introduce the set of doubly-indexed indeterminates $\{x_{i,j}\}_{i,j\geq 1}$ where each indeterminate $x_{i,j}$ has degree $i$. We often drop the comma between the indices and write $x_{i,j}$ as $x_{ij}$. The $\QQ$-algebra of formal power series in these indeterminates is denoted by $\QQ[[\x_{**}]]$. For a monomial $f\in \QQ[[\x_{**}]]$, the degree of $f$ is the total degree of indeterminates that appear as factors (with multiplicity). For example, $x_{11}^3 x_{12}^2 x_{23}^4 x_{41}^3$ has degree $1\cdot 3 + 1\cdot 2 + 2\cdot 4 + 4\cdot 3= 25$.

A formal power series $f\in \QQ[[\x_{**}]]$ is called a \textit{polysymmetric} function if it is preserved under any swapping of pairs of indeterminates $x_{ij}$ and $x_{ik}$ for all $i,j,k\geq 1$. We denote the $\QQ$-vector space of homogeneous polysymmetric functions of degree $n$ by $\psym(n)$ and the graded $\QQ$-algebra of all polysymmetric functions is denoted by $\psym$. The $n$th degree component of $\psym$ is $\psym(n)$.

\begin{example}
The formal power series
\[
F = \sum_{\substack{j>i \geq 1\\k\geq 1}} x_{1i}x_{1j} x_{2k} + \sum_{j > i \geq 1} x_{2i}x_{2j}^3 + \sum_{i\geq 1}x_{3i}^2
\]
is a polysymmetric function in $\PSym(4)\oplus \PSym(8) \oplus \PSym(6)$.
\end{example}

Equivalently, one may view the algebra of polysymmetric functions as a tensor product of copies of the algebra of symmetric functions where the variables in the $i$th copy have their degrees scaled by $i$. If $\sym^{(i)}$ is the algebra of symmetric functions where $x_j$ is replaced by $x_{ij}$, then $\psym\cong \bigotimes_{i\geq 1} \Sym^{(i)}$.

 Let $g\in \sym$. We use the shorthand $g(\x_*)$ for $g(x_1, x_2, \ldots)$. We embed $g$ in $\psym(d)$ by writing $g(\x_{d*})$ for $g(x_{d1}, x_{d2},\ldots)$. For $G\in \psym$, we write $G(\x_{**})$ to emphasize that the variable set is $\{x_{i,j}\}_{i,j\geq 1}$. Suppose for a particular polysymmetric function $f(\x_{**})$, there exist symmetric functions $f_i$ for $i\geq 1$ such that $f(\x_{**}) = f_1(\x_{1*})f_2(\x_{2*})\cdots$. Then $f = f_1\otimes f_2 \otimes \cdots$ is an element of $\psym\cong \bigoplus_{i\geq 1} \Sym^{(i)}$. In this paper, we use these two notations interchangeably.

The types of $n$ index the basis elements of $\psym(n)$~\cite[Thm. 3.1]{psym}. First, we discuss how to construct a {pure-tensor basis} from a symmetric function basis. Let $\{h_\la\}_{\la\in \Par}$ be the complete homogeneous symmetric function basis of $\sym$. For an ordered type $\de$, define
  \[
  h_\de^\otimes = h_{\de|_1}\otimes h_{\de|_2}\otimes \ldots.
  \] 
  \begin{example}
  For $\de = 1^{2,4,2}2^{2,1}4^{2,1,3,1}$, 
  \[
  h_\de = h_{2,4,2}\ox h_{2,1}\ox 1 \ox h_{2,1,3,1}.
  \]
  \end{example}
Similarly, we define $p^\otimes_\de$ and $e^\ox_\de$. For a type $\tau$, we define $h^\otimes_\tau$ as
\[
  h_\tau^\otimes = h_{\tau|_1}\otimes h_{\tau|_2}\otimes \ldots
  \] 
  Similarly define $e^\otimes_\tau$, $p^\ox_\tau$, $m^\ox_\tau$ and $s^\ox_\tau$. Each of the sets $\{h^\ox_\tau\}_{\tau\in \Typ(n)}$, $\{e^\ox_\tau\}_{\tau\in \Typ(n)}$, $\{p^\ox_\tau\}_{\tau\in \Typ(n)}$, $\{m^\ox_\tau\}_{\tau\in \Typ(n)}$, and $\{s^\ox_\tau\}_{\tau\in \Typ(n)}$ generate $\PSym(n)$ as a $\QQ$-vector space.
  
  \begin{remark}
Refer to Rem. 1.5 in \cite{KLpsym} for discussion of the representation theoretic connection between polysymmetric functions and the algebra of uniform block permutations which was defined by Orellana et al. \cite{ubp}.
  \end{remark}

Asvin G and Andrew O'Desky \cite[Sec. 3]{psym} define four families of bases which are not pure-tensor bases. We use the term monomial to mean a monomial in $\QQ[[\x_{**}]]$ and a monomial is called \textit{square-free} if each indeterminate factor appears exactly once.

Let $d$ be a positive integer. Define $H_d$ to be the sum of all degree $d$ monomials. Define $E^+_d$ to be the sum of all degree $d$ square-free monomials. Let $\text{len}(f)$ be the number of variables in a square-free monomial $f$. Define $E_d$ to be the sum of all degree $d$ square-free monomials such that the coefficient of the monomial $f$ is $(-1)^{\text{len}(f)}$. Define $P_d = \sum\limits_{k\mid d} k  \sum_{j\geq 1}x_{kj}^{d/k}$ where the sum is over divisors $k$ of $d$. We also define $H_0 = E_0 = E^+_0 = P_0 = 1$. 

\begin{example}
Let $d = 3$. The monomial expansions of the above polysymmetric functions are:
\begin{align*}
H_3 &= x_{31} + x_{21}x_{11} + x_{11}x_{12}x_{13} + x_{11}^2 x_{12} + x_{11}^3 + \cdots\\
E_3 &= -x_{31} + x_{21}x_{11} - x_{11}x_{12}x_{13} + \ldots\\
E^+_3&= x_{31} + x_{21}x_{11} + x_{11}x_{12}x_{13} + \cdots
\end{align*}
Also for $d = 6$, $P_6 = x_{11}^6 + 2x_{21}^3 + 3x_{31}^2 + 6x_{61} + \cdots$.
\end{example}

For $F\in \QQ[[\x_{**}]]$, the notation $F(\x_{**}^r)$ denotes the power series obtained by replacing each indeterminate $x_{ij}$ with $x_{ij}^r$.

For a block $d^r$, define $H_{d^r} = H_d(\x_{**}^r)$. So,
\[
H_{3^4} = x_{31}^4 + x_{21}^4x_{11}^4 + x_{11}^4 x_{12}^4x_{13}^4 + x_{11}^8 x_{12}^4 + x_{11}^{12} + \cdots.
\] For an ordered type $\delta = d^{r_1, r_2,\ldots, r_k}$, define $H_{\delta}(\x_{**}) = H_{d}(\x_{**}^{r_1})H_d(\x_{**}^{r_2})\cdots H_{d}(\x_{**}^{r_k})$. For a general ordered type $\delta$, define $H_\delta = H_{\delta|_1} H_{\delta|_2}\ldots$. For example,
\begin{align*}
H_{1^{2,4,2} 2^{2,1} 4^{2,1,3,1}} &= H_{1^{2,4,2}} H_{2^{2,1}}H_{4^{2,1,3,1}}\\
&=H_1(\x_{**}^2)H_1(\x_{**}^4)H_1(\x_{**}^2)\cdot H_2(\x_{**}^2)H_2(\x_{**})\cdot\\ &\:H_4(\x_{**}^2)H_4(\x_{**})H_4(\x_{**}^3)H_4(\x_{**}).
\end{align*}

Similarly, define $E_\de$, $E^+_\de$, and $P_\de$. 

The sets $\{H_{d^r}\}_{d,r\geq 1}$,  $\{E_{d^r}\}_{d,r\geq 1}$,  $\{E^+_{d^r}\}_{d,r\geq 1}$, and  $\{P_{d^r}\}_{d,r\geq 1}$ each generate $\psym$ as a $\QQ$-algebra \cite[Thm. 3.1, Cor. 3.3]{psym}.

We recall the expansions of $H_d$, $E_d$, $E^+_d$ and $P_d$ in pure-tensor bases as proved in \cite{KLpsym} . For a partition $\la$, define $m_i(\la)$ to be the number of times the part $i$ appears in $\la$.

\begin{prop}[Prop. 2.9, Prop 2.23 in \cite{KLpsym}]\label{prop:d-expansions}
Let $d$ be a non-negative integer. Then
\begin{enumerate}
\item $H_d = \sum\limits_{\la\in \Par(d)} h_{m_1(\la)} \otimes h_{m_2(\la)} \otimes \cdots$.
\item $E_d = \sum\limits_{\la\in \Par(d)} (-1)^{\ell(\la)} e_{m_1(\la)} \otimes e_{m_2(\la)} \otimes \cdots$.
\item $E^+_d = \sum\limits_{\la\in \Par(d)} e_{m_1(\la)} \otimes e_{m_2(\la)} \otimes \cdots$.
\item $P_d = \sum\limits_{k|d} k p^\otimes_{k^{d/k}}$.
\end{enumerate}
\end{prop}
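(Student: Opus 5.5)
We argue at the level of monomials in $\QQ[[\x_{**}]]$, treating each of the four statements as a generating-function identity for a single degree $d$. For (1), every monomial of degree $d$ in the variables $x_{ij}$ (where $x_{ij}$ has degree $i$) records, for each $i$, a multiset of variables $x_{i*}$ of some size $m_i$. These multiplicities satisfy $\sum_i i\, m_i = d$, so they determine a unique partition $\la\in\Par(d)$ with $m_i(\la)=m_i$. Grouping the monomials of $H_d$ by this partition, the monomials with a fixed $\la$ are exactly the products, over $i$, of an arbitrary degree-$m_i(\la)$ monomial in $\x_{i*}$. Their sum factors as $\prod_i h_{m_i(\la)}(\x_{i*})$, which is $h_{m_1(\la)}\ox h_{m_2(\la)}\ox\cdots$. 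This proves (1).

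For (3), we repeat the grouping with square-free monomials only. Square-free monomials in $\x_{i*}$ of size $m_i$ sum to $e_{m_i}(\x_{i*})$, which gives (3). For (2), we track the sign $(-1)^{\text{len}(f)}$. For a square-free monomial $f$ in the class of $\la$, $\text{len}(f)=\sum_i m_i(\la)=\ell(\la)$, so the sign is constant on each class and factors out as $(-1)^{\ell(\la)}$.

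For (4), we use the definition $P_d=\sum_{k\mid d}k\sum_j x_{kj}^{d/k}$ directly. The inner sum is $p_{d/k}(\x_{k*})$, which as a pure tensor is $1\ox\cdots\ox p_{d/k}\ox\cdots$ with $p_{d/k}$ in slot $k$. This is $p^\ox_{k^{d/k}}$ for the block $k^{d/k}$, so (4) follows.

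The only step needing care is the bijection between degree-$d$ monomials and tuples $(\la,\text{monomials in each }\x_{i*})$. We check that the degree condition forces $\la\in\Par(d)$, and conversely that every $\la\in\Par(d)$ arises. Both directions hold because $\sum_i i\,m_i(\la)=|\la|$, so no real obstacle is expected. We also handle $d=0$ separately: the only partition of $0$ is empty, and both sides equal $1$.
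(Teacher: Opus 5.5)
Your proof is correct. The paper gives no argument of its own here; it quotes the result from Khanna and Loehr. So the right comparison is with the definitions, and your grouping of degree-$d$ monomials by the partition $\la=1^{m_1}2^{m_2}\cdots$ of their factor counts in each $\x_{i*}$ is the natural direct proof. It works because the variable sets $\x_{1*},\x_{2*},\ldots$ are disjoint. Every monomial therefore factors uniquely as a product of monomials in the separate $\x_{i*}$, with $\sum_i i\,m_i=d$. Here ``degree $m_i$'' in $\x_{i*}$ must mean $m_i$ factors, not weighted degree $i\,m_i$. Equivalently, you are reading off the coefficient of $t^d$ in
\[
\prod_{i,j\geq 1}(1-x_{ij}t^i)^{-1}=\prod_{i\geq 1}\sum_{m\geq 0}h_m(\x_{i*})\,t^{im},
\]
and in the analogous products $\prod_{i,j}(1\mp x_{ij}t^i)$ for (2) and (3). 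This generating-function packaging is convenient when one later substitutes $x_{ij}\mapsto x_{ij}^r$ to get Proposition~\ref{prop:dr-expansions}, but it is the same argument as yours.

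One small correction concerns your $d=0$ check, where you say both sides equal $1$. That is valid for (1)--(3), but not for (4). Every $k\geq 1$ divides $0$, so the right-hand side $\sum_{k\mid 0}k\,p^\ox_{k^{0}}$ is not $1$; it is not even a finite sum. Part (4) should be read for $d\geq 1$, with $P_0=1$ as a separate convention. This is a flaw in how the statement is phrased, not in your reasoning.
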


The polysymmetric function $H_{d^r}$ is obtained by raising each variable in $H_d$ to the power $r$ which is reminiscent of performing the operation of plethysm with $p_r$ as in Proposition \ref{prop:monomial-sub}. This observation leads to the following expansions.
\begin{prop}[Prop. 2.9, Prop 2.23 in \cite{KLpsym}]\label{prop:dr-expansions}
Let $d$ be a non-negative integer and $r$ be a positive integer. Then
\begin{enumerate}
\item $H_{d^r} = \sum\limits_{\la\in \Par(d)} h_{m_1(\la)}[p_r] \otimes h_{m_2(\la)}[p_r] \otimes \cdots$.
\item $E_{d^r}= \sum\limits_{\la\in \Par(d)} (-1)^{\ell(\la)} e_{m_1(\la)}[p_r] \otimes e_{m_2(\la)}[p_r] \otimes \cdots$.
\item $E^+_{d^r} = \sum\limits_{\la\in \Par(d)} e_{m_1(\la)}[p_r] \otimes e_{m_2(\la)}[p_r] \otimes \cdots$.
\item $P_{d^r} = \sum\limits_{k|d} k p^\otimes_{k^{dr/k}}$.
\end{enumerate}
\end{prop}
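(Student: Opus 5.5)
The plan is to deduce each item of Proposition \ref{prop:dr-expansions} from the matching item of Proposition \ref{prop:d-expansions}, using a variable-powering version of Proposition \ref{prop:monomial-sub}. By definition, $H_{d^r}(\x_{**}) = H_d(\x_{**}^r)$, where $H_d(\x_{**}^r)$ means every indeterminate $x_{ij}$ is replaced by $x_{ij}^r$; the same holds for $E$, $E^+$ and $P$. The substitution $\phi_r : x_{ij}\mapsto x_{ij}^r$ is a ring endomorphism of $\QQ[[\x_{**}]]$, so it suffices to compute $\phi_r$ on each pure tensor appearing in Proposition \ref{prop:d-expansions}.

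The key step is the claim that for symmetric functions $f_1, f_2, \ldots$ (all but finitely many equal to $1$),
\[
\phi_r\bigl(f_1(\x_{1*}) f_2(\x_{2*})\cdots\bigr) = f_1(\x_{1*}^r) f_2(\x_{2*}^r)\cdots = (f_1[p_r])(\x_{1*})\,(f_2[p_r])(\x_{2*})\cdots .
\]
The first equality holds because $\phi_r$ is a multiplicative substitution that respects the variable blocks $\x_{i*}$. The second is Proposition \ref{prop:monomial-sub} applied blockwise: the $i$th factor is $f_i$ evaluated in the variables $x_{i1}, x_{i2}, \ldots$, and raising those variables to the $r$th power gives $f_i[p_r]$ in the same variables. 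In tensor notation, $\phi_r(f_1\otimes f_2\otimes\cdots) = f_1[p_r]\otimes f_2[p_r]\otimes\cdots$.

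Applying $\phi_r$ term by term to items (1)--(3) of Proposition \ref{prop:d-expansions} then gives items (1)--(3) immediately. This is legitimate because $\phi_r$ is linear and the sums are finite. The sign $(-1)^{\ell(\la)}$ in the $E$ case is a scalar and is untouched by $\phi_r$.

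For item (4), apply $\phi_r$ to $P_d = \sum_{k\mid d} k\, p^\otimes_{k^{d/k}}$. Here $p^\otimes_{k^{d/k}}$ is $p_{d/k}$ placed in the $k$th tensor factor, that is, $\sum_j x_{kj}^{d/k}$. Since $p_{d/k}[p_r] = p_{dr/k}$, we get $\phi_r\bigl(p^\otimes_{k^{d/k}}\bigr) = p^\otimes_{k^{dr/k}}$, which yields (4).

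The only point needing care is the blockwise use of Proposition \ref{prop:monomial-sub}: it must be checked against the definition of the $\psym\cong\bigotimes_i \Sym^{(i)}$ identification, where the $i$th factor uses the variables $\x_{i*}$. Once that identification is written out, the check is routine, and the degree bookkeeping (each $x_{ij}$ of degree $i$, so the total degree scales by $r$) is consistent automatically.
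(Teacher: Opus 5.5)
Your proposal is correct and takes the same route the paper indicates. The paper cites this result rather than proving it, and motivates it by the same observation you use: $H_{d^r}$ is $H_d$ with every variable $x_{ij}$ replaced by $x_{ij}^r$, which on each block of variables $\x_{i*}$ is plethysm with $p_r$ by Proposition \ref{prop:monomial-sub}. Applying that substitution termwise to Proposition \ref{prop:d-expansions}, as you do, makes the argument explicit.
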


\subsection{Algebra involutions on Sym and PSym}
Recall that there exists a unique algebra involution $\omega$ on $\sym$ which for any composition $\beta$ maps $h_\be$ to $e_\be$ and $e_\be$ to $h_\be$ \cite[I.2 (2.7)]{macd}. Also, $\omega(p_\la) = (-1)^{|\la|-\ell(\la)} p_\la$~\cite[I.2 (2.13)]{macd}. Let $I$ be some multiset consisting of partitions of $n$. If for some $f\in \sym(n)$, $f = \sum_{\al\in I} c_\al h_{\al}$, then $\omega(f) = \sum_{\al\in I} c_\al e_{\al}$. Informally, if we can express $f$ as a linear combination of elements of the $h$-basis, then we can immediately find $\omega(f)$ as a linear combination of the $e$-basis. In this paper, we make signifcant use of this property. The involution $\omega$ also interacts nicely with the plethysm operation.
\begin{prop}[I.8 Ex. 1(a) in \cite{macd}]
Let $f$ and $g$ be homogeneous symmetric functions. Then
\[
\omega(f[g]) = \begin{cases}
f[\omega(g)] & \text{if } \deg(g) \text{ is even}\\
\omega(f)[\omega(g)] & \text{if } \deg(g) \text{ is odd}.
\end{cases}
\]
\end{prop}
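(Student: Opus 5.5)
The plan is to reduce to power sums. Both sides of the identity are multiplicative and $\QQ$-linear in the outer function $f$, so it suffices to check the identity when $f = p_n$. The power sums are the natural choice because both $\omega$ and plethysm act diagonally on them. Throughout, fix $g$ homogeneous of degree $m$.

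First I would note three facts. The map $\Phi_1\colon f \mapsto \omega(f[g])$ is a $\QQ$-algebra homomorphism $\Sym\to\Sym$, since $f\mapsto f[g]$ is one by the plethysm rules $(f_1+f_2)[g]=f_1[g]+f_2[g]$, $(f_1f_2)[g]=f_1[g]f_2[g]$, $(cf_1)[g]=cf_1[g]$, and $\omega$ is an algebra involution. Likewise $\Phi_2\colon f\mapsto f[\omega(g)]$ is an algebra homomorphism. The map $\Phi_3\colon f\mapsto \omega(f)[\omega(g)]$ is one as well, being the composite of $\omega$ with $\Phi_2$. Since $\{p_n\}_{n\ge1}$ generates $\Sym$ as a $\QQ$-algebra, it then suffices to show $\Phi_1(p_n)=\Phi_2(p_n)$ when $m$ is even, and $\Phi_1(p_n)=\Phi_3(p_n)$ when $m$ is odd.

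For the computation, expand $g=\sum_{\la\in\Par(m)} c_\la p_\la$. Using $p_n[g]=g[p_n]$, the rules $p_n[g_1g_2]=p_n[g_1]p_n[g_2]$, linearity, and $p_n[p_k]=p_{nk}$, we get $p_n[g]=\sum_\la c_\la p_{n\la}$, where $n\la$ scales every part of $\la$ by $n$. Since $|n\la|=nm$ and $\ell(n\la)=\ell(\la)$, the formula $\omega(p_\la)=(-1)^{|\la|-\ell(\la)}p_\la$ gives
\[
\Phi_1(p_n)=\sum_\la c_\la(-1)^{nm-\ell(\la)}p_{n\la}.
\]
On the other side, $\omega(g)=\sum_\la c_\la(-1)^{m-\ell(\la)}p_\la$, so
\[
\Phi_2(p_n)=\sum_\la c_\la(-1)^{m-\ell(\la)}p_{n\la}.
\]
Termwise, these two sums differ by the sign $(-1)^{(n-1)m}$.
\begin{itemize}
\item If $m$ is even, this sign is $1$, so $\Phi_1(p_n)=\Phi_2(p_n)$.
\item If $m$ is odd, the sign is $(-1)^{n-1}$. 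Since $\omega(p_n)=(-1)^{n-1}p_n$, we have $\Phi_3(p_n)=(-1)^{n-1}\Phi_2(p_n)=\Phi_1(p_n)$.
\end{itemize}

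The only point needing care is the reduction to generators. One must check that plethysm is an algebra homomorphism in the outer variable even when $f$ is not homogeneous, and that rational scalars pass through both $\omega$ and plethysm. Both facts follow from the properties recalled from \cite{pleth-expose}. The sign computation itself is routine once everything is written in the $p$-basis.
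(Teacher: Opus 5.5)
Your proof is correct and complete. The paper gives no proof of this statement; it quotes it from Macdonald (I.8, Ex.~1(a)), and your argument is the standard verification of that exercise. For fixed $g$, the three maps $f\mapsto\omega(f[g])$, $f\mapsto f[\omega(g)]$ and $f\mapsto\omega(f)[\omega(g)]$ are unital $\QQ$-algebra homomorphisms. It therefore suffices to compare them on the generators $p_n$, where the only discrepancy is the sign $(-1)^{(n-1)m}$.

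Two small remarks. First, homogeneity of $g$ is used exactly to make that sign independent of $\la$, while homogeneity of $f$ is never used. So you have in fact proved the identity for arbitrary $f\in\Sym$, which is slightly stronger than the stated version. Second, the appeal to $p_n[g]=g[p_n]$ is unnecessary. Linearity and multiplicativity of $p_n[\,\cdot\,]$ in its inner argument already give $p_n[p_\la]=p_{n\la}$ directly.
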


The following proposition shows how $\omega$ can be expressed in terms of plethysm.

\begin{prop}[I.8 Ex. 1(a) in \cite{macd} and Thm. 6 in \cite{pleth-expose}]\label{prop:negation-rule}
For all $f,g\in \sym$, $f[-g]= (-1)^{\deg(f)} (\omega(f))[g]$.
\end{prop}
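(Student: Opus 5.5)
The plan is to reduce the identity to the power-sum generators, using the fact that both sides depend multiplicatively and linearly on the outer function $f$. Fix $g$. By the rules recalled above, $(f_1+f_2)[-g]=f_1[-g]+f_2[-g]$, $(f_1f_2)[-g]=f_1[-g]\,f_2[-g]$ and $(cf)[-g]=c\,f[-g]$. So the map $\Phi_L\colon f\mapsto f[-g]$ is a $\QQ$-algebra homomorphism out of $\sym$.

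On the right-hand side, I will read $(-1)^{\deg(f)}$ on homogeneous components and extend linearly. Define $\theta\colon\sym\to\sym$ by $\theta(f)=(-1)^n\omega(f)$ for $f\in\sym(n)$. Since $\omega$ is a graded algebra involution and degrees add under multiplication, $\theta(f_1f_2)=(-1)^{n_1+n_2}\omega(f_1)\omega(f_2)=\theta(f_1)\theta(f_2)$ for homogeneous $f_1,f_2$. Hence $\theta$ is an algebra homomorphism. Composing with the homomorphism $u\mapsto u[g]$ shows that $\Phi_R\colon f\mapsto \theta(f)[g]$ is also an algebra homomorphism.

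Since $\{p_n\}_{n\ge1}$ generates $\sym$ as a $\QQ$-algebra, it suffices to check $\Phi_L(p_n)=\Phi_R(p_n)$ for every $n\ge1$. For the left side, the rule $p_m[cg_1]=c\,p_m[g_1]$ with $c=-1$ gives $p_n[-g]=-p_n[g]$. For the right side, $\omega(p_n)=(-1)^{n-1}p_n$ (the case $\la=(n)$ of $\omega(p_\la)=(-1)^{|\la|-\ell(\la)}p_\la$). Therefore
\[
(-1)^n\omega(p_n)[g]=(-1)^n(-1)^{n-1}p_n[g]=-p_n[g].
\]
The two homomorphisms agree on generators, hence on all of $\sym$, which gives $f[-g]=(-1)^{\deg(f)}(\omega(f))[g]$ for homogeneous $f$, and by linearity in general.

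The only delicate point is conventional rather than computational. In $p_m[cg_1]=c\,p_m[g_1]$ the scalar $-1$ must be treated as a constant, not as a variable that $p_m$ would raise to the $m$th power; this is exactly the convention of \cite{pleth-expose} recalled above. I will also state explicitly that $(-1)^{\deg(f)}$ is meant componentwise, so that $\theta$ is well defined as a linear map. With these two points fixed, the remaining steps are routine.
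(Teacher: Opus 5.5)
Your proof is correct. The paper gives no argument of its own for this proposition and simply cites Macdonald and Loehr--Remmel. Your argument is the standard verification behind those references: both $f\mapsto f[-g]$ and $f\mapsto (-1)^{\deg f}\omega(f)[g]$ are algebra homomorphisms in $f$, so it suffices to check the power sums. There $p_n[-g]=-p_n[g]=(-1)^n\omega(p_n)[g]$.
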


\begin{remark}
When $g = p_1$, $f[-p_1] = (-1)^{\deg(f)}\omega(f)$ is the antipode map for the Hopf algebra of symmetric functions \cite[Prop. 2.4.1]{hopf}. %Define \[S(f[g]) = f[-g].\]
\end{remark}

In \cite{psym}, the authors define $\Omega$ as an analog of $\omega$ for the algebra of polysymmetric functions. For all ordered types $\delta$, $\Omega$ is an algebra isomorphism such that $\Omega(H_\de) = E_\de$ and $\Omega(E_\de) = H_{\de}$. From \cite[Cor. 16]{khanna-bij}, we know that $\Omega(P_\de) = (-1)^{\ell(\de)} P_\de$. We now study the action of $\Omega$ on pure-tensors.

\begin{prop}\label{prop:Omega-on-plethysm}
If $f_1, f_2, \ldots, g_1, g_2,\ldots$ are symmetric functions, then
\[
\Omega(f_1[g_1]\otimes f_2[g_2]\otimes \cdots)  = f_1[-g_1]\otimes f_2[-g_2]\otimes \cdots.
\]
\end{prop}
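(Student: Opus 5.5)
Since $\Omega$ is an algebra map and the pure tensors $f_1[g_1]\otimes f_2[g_2]\otimes\cdots$ are products of their factors (each factor being $1\otimes\cdots\otimes f_i[g_i]\otimes 1\otimes\cdots$ placed in the $i$th slot), I would first reduce to a single tensor factor. The task is then to show that $\Omega$ restricted to the $d$th copy $\Sym^{(d)}$ acts as $F\mapsto F[-p_1]$, that is, as the antipode. Concretely, the claim is that for every $F\in\Sym$, embedded in slot $d$, we have $\Omega(F(\x_{d*})) = (F[-p_1])(\x_{d*})$.

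Granting this, the statement follows from associativity of plethysm. With $F=f_i[g_i]$,
\[
F[-p_1]=(f_i[g_i])[-p_1]=f_i[g_i[-p_1]]=f_i[-g_i],
\]
because $g_i[-p_1]=-g_i$, which is just the formal meaning of $-g$ in the negation rule. Multiplying the factors back together then gives the proposition.

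To prove the single-slot claim, I would use that both sides are algebra homomorphisms $\Sym^{(d)}\to\psym$, so it suffices to check them on generators. I would take the power sums $p_n(\x_{d*})=p^\otimes_{d^n}$. The right side gives $p_n[-p_1]=-p_n$. For the left side, I need $\Omega(p^\otimes_{d^n})=-p^\otimes_{d^n}$, and I would derive this from $\Omega(P_\de)=(-1)^{\ell(\de)}P_\de$ (Cor. 16 of \cite{khanna-bij}) together with Proposition \ref{prop:dr-expansions}(4). That result gives the triangular system
\[
P_{d^r}=\sum_{k\mid d} k\,p^\otimes_{k^{dr/k}},
\]
and Möbius inversion over divisors expresses $p^\otimes_{d^n}$ as a $\QQ$-linear combination of the single blocks $P_{e^{s}}$, each of length $1$. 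Every such block is negated by $\Omega$, so by linearity $\Omega(p^\otimes_{d^n})=-p^\otimes_{d^n}$, matching $p_n[-p_1]=-p_n$.

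The main obstacle is this generator check. The inversion step has to be set up carefully: it should express $p^\otimes_{k^{m}}$ purely through the length-one elements $P_{j^{s}}$, proceeding by induction on $k$ using the triangularity in $k\mid d$, and the indices $dr/k$ must be tracked so that every term really is a single block. An alternative route is to check the claim on $h_n$ directly, using $\Omega(H_d)=E_d$ and the expansions in Proposition \ref{prop:d-expansions}, but that is messier. I would therefore use the power-sum approach, since the image of each generator is simply its negative.
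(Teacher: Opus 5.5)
The step that fails is $g_i[-p_1]=-g_i$. The $-g$ in Proposition~\ref{prop:negation-rule} is the ordinary negative of the series $g$, so that $p_k[-g]=-p_k[g]$. By contrast, applying that proposition with outer function $g$ and inner function $p_1$ gives $g[-p_1]=(-1)^{\deg g}\omega(g)$; for example $h_2[-p_1]=e_2\neq -h_2$. So $(f[g])[-p_1]=f\bigl[g[-p_1]\bigr]$ is not $f[-g]$ in general.

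This cannot be patched, because the proposition is false for arbitrary $g_i$. Since $h_2=p_1[h_2]=h_2[p_1]$, the formula gives both $\Omega(h_2\ox 1\ox\cdots)=p_1[-h_2]\ox1\ox\cdots=-h_2\ox1\ox\cdots$ and $\Omega(h_2\ox1\ox\cdots)=h_2[-p_1]\ox1\ox\cdots=e_2\ox1\ox\cdots$. The second is correct. The identities $\Omega(H_{1^2})=E_{1^2}$ and $\Omega(H_{1^{1,1}})=E_{1^{1,1}}$ say that $\Omega$ negates $p_2(\x_{1*})$ and fixes $p_1(\x_{1*})^2$. Hence $\Omega(h_2(\x_{1*}))=\tfrac12\bigl(p_1^2-p_2\bigr)(\x_{1*})=e_2(\x_{1*})$, which is exactly what your own slotwise claim predicts.

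What your argument does prove is $\Omega(F_1\ox F_2\ox\cdots)=F_1[-p_1]\ox F_2[-p_1]\ox\cdots$, i.e.\ $\Omega$ acts as the antipode in each slot. Your generator check is correct: M\"obius inversion of Proposition~\ref{prop:dr-expansions}(4) gives $d\,p^\ox_{d^n}=\sum_{k\mid d}\mu(d/k)\,P_{k^{dn/k}}$, with $\mu$ the M\"obius function. This is a combination of single blocks, each negated by $\Omega$.

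The displayed identity then follows whenever $g_i[-p_1]=-g_i$, for instance $g_i=p_{r_i}$, since $p_r[-p_1]=-p_r$. That covers every use in the paper: its proof only evaluates at $h_n[p_r]$, and Corollary~\ref{cor:Omega-of-htensor} takes $g_i=p_1$. The paper's $\Omega'$ is likewise well defined only when read as this slotwise map.

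With that hypothesis added, your proof is complete but takes a different route from the paper's. You test on the power-sum generators $p^\ox_{d^n}$, using the cited fact $\Omega(P_\de)=(-1)^{\ell(\de)}P_\de$. The paper tests on the generators $H_{d^r}$, using Proposition~\ref{prop:dr-expansions}(1)--(2) and $h_n[-p_r]=(-1)^ne_n[p_r]$, which keeps its argument independent of the external result on $P$.
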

\begin{proof}
Define $\Omega':\psym\to \psym$ to be the algebra homomorphism that sends $f_1[g_1]\otimes f_2[g_2]\otimes \cdots$ to $f_1[-g_1]\otimes f_2[-g_2]\otimes \cdots$. We show that $\Omega'$ agrees with $\Omega$ on the algebraic generators of $\psym$ and thus agrees with $\Omega$ on all of $\psym$. From Thm. 3.1 in \cite{psym}, we know that $\{H_{d^r}\}_{d,r\geq 1}$ generates $\psym$ as a $\QQ$-algebra. Choose positive integers $d$ and $r$. Using Proposition \ref{prop:dr-expansions} (1), we can apply $\Omega'$ on $H_{d^r}$ to find
\[
\Omega'(H_{d^r}) = \Omega'(\sum\limits_{\la\in \Par(d)} h_{m_1(\la)}[p_r] \otimes h_{m_2(\la)}[p_r] \otimes \cdots).
\]
Under the action of $\Omega'$, the right hand side can be rewritten as
\[
\Omega'(H_{d^r}) = \sum\limits_{\la\in \Par(d)} h_{m_1(\la)}[-p_r] \otimes h_{m_2(\la)}[-p_r] \otimes \cdots.
\] 
Using the negation rule for plethysm from Proposition \ref{prop:negation-rule} and $\omega(h_n) = e_n$ for all $n\geq 1$, we find that $h_n[-p_r] = (-1)^n e_n[p_r]$ for all $n,r \geq 1$. Now
\begin{align*}
\Omega'(H_{d^r}) &= \sum\limits_{\la\in \Par(d)} (-1)^{m_1(\la)}e_{m_1(\la)}[p_r] \otimes (-1)^{m_2(\la)}e_{m_2(\la)}[p_r] \otimes \cdots\\ &= \sum\limits_{\la\in \Par(d)} (-1)^{m_1(\la)+m_2(\la) + \cdots}e_{m_1(\la)}[p_r] \otimes e_{m_2(\la)}[p_r] \otimes \cdots .
\end{align*}
The sum $m_1(\la) + m_2(\la) + \ldots$ is the sum over multiplicities of all parts of $\la$ and is thus equal to the number of parts $\ell(\la)$ of $\la$. This shows that
\[
\Omega'(H_{d^r}) = \sum\limits_{\la\in \Par(d)} (-1)^{\ell(\la)}e_{m_1(\la)}[p_r] \otimes e_{m_2(\la)}[p_r] \otimes \cdots = E_{d^r},
\]
where the final equality follows from part (2) of Proposition \ref{prop:dr-expansions}. As $d$ and $r$ were chosen arbitrarily, $\Omega'$ agrees with $\Omega$ on the generating set $\{H_{d^r}\}_{d,r\geq 1}$ and thus $\Omega'= \Omega$ as $\QQ$-algebra isomorphisms on $\psym$.
\end{proof}

Recall that for a composition $\beta$,  $\area(\be)$ is the sum of the entries of $\be$ and is equal to the size of $\be$. The notions of size and area coincide for compositions but differ for ordered types. The size of an ordered type $\de$ is $|\de| = \sum_{i\geq 1} i \area(\de|_i)$. On the other hand, we define the \textit{area} of an ordered type $\de$ as $\area(\de) =  \sum_{i\geq 1} \area(\de|_i)$. This is equal to the total number of boxes in the top rows of the blocks in the block diagram of $\de$. This definition specializes naturally to types $\tau$ as $\area(\tau) =  \sum_{i\geq 1} \area(\tau|_i)$.
\begin{corollary}\label{cor:Omega-of-htensor}
For an ordered type $\de$, $\Omega(h^\otimes_\de) = (-1)^{\area(\de)} e^\otimes_\de$ and $\Omega(e^\otimes_\de) = (-1)^{\area(\de)} h^\otimes_\de$ .
\end{corollary}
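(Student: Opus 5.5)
We apply Proposition \ref{prop:Omega-on-plethysm} directly, writing each tensor factor as a plethysm with $p_1$. Since $f = f[p_1]$ for every $f\in\Sym$, we have
\[
h^\otimes_\de = h_{\de|_1}[p_1]\otimes h_{\de|_2}[p_1]\otimes\cdots .
\]
Each factor $h_{\de|_i}$ is a product of $h_{\be_j}$ over the parts of $\de|_i$. Proposition \ref{prop:Omega-on-plethysm} then gives
\[
\Omega(h^\otimes_\de) = h_{\de|_1}[-p_1]\otimes h_{\de|_2}[-p_1]\otimes\cdots .
\]
This uses only the tensor-factor-wise form of the proposition, so the product structure inside each factor causes no difficulty.

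Next we evaluate each factor with the negation rule, Proposition \ref{prop:negation-rule}. For a composition $\be$, the function $h_\be$ is homogeneous of degree $\area(\be)$, so
\[
h_\be[-p_1] = (-1)^{\area(\be)}\,\omega(h_\be)[p_1] = (-1)^{\area(\be)} e_\be .
\]
Here we used $\omega(h_\be)=e_\be$ and $f[p_1]=f$. Alternatively, one can apply the rule to each $h_{\be_j}$ separately and multiply, since plethysm is multiplicative in the outer function.

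Multiplying the signs across all tensor factors gives $(-1)^{\sum_i \area(\de|_i)} = (-1)^{\area(\de)}$. This uses the definition of $\area(\de)$ as the sum of the areas of its components, not the size, which carries the weights $i$. Hence $\Omega(h^\otimes_\de) = (-1)^{\area(\de)}e^\otimes_\de$.

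For the second identity, $\Omega$ is an involution, since it swaps $H_\de$ and $E_\de$ on generators. Applying $\Omega$ to the first identity and solving gives $\Omega(e^\otimes_\de) = (-1)^{\area(\de)}h^\otimes_\de$. One can also repeat the computation above with $\omega(e_\be)=h_\be$.

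The main subtlety is the bookkeeping of the scaling. In the polysymmetric embedding, the $i$th factor has variables of degree $i$, but the negation rule must be applied with the symmetric-function degree $\area(\de|_i)$. Proposition \ref{prop:Omega-on-plethysm} is stated factor-wise in $\Sym$, so it supplies exactly this degree, and the sign comes out in terms of area rather than size.
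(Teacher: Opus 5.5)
Your proposal is correct and follows the paper's own proof. Like the paper, you write each tensor factor as a plethysm with $p_1$, apply Proposition~\ref{prop:Omega-on-plethysm}, evaluate $h_\be[-p_1]=(-1)^{\area(\be)}e_\be$ factor-wise with the negation rule, and multiply the signs to get $(-1)^{\area(\de)}$. For the second identity the paper only says the computation is identical (your alternative route); your involution argument also works, since $\Omega^2$ fixes the generators $H_{d^r}$.
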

\begin{proof}
We write $h^\otimes_\de = h_{\de|_1}\otimes h_{\de|_2}\otimes \cdots$. Specializing $f_i$ to $h_{\de|_i}$ and $g_i$ to $p_1$ for $i\geq 1$ in Proposition \ref{prop:Omega-on-plethysm} yields
\[
\Omega(h^\otimes_\de) = h_{\de|_1}[-p_1]\otimes h_{\de|_2}[-p_2]\otimes \cdots.
\]
Using Proposition \ref{prop:negation-rule}, $h_\beta[-p_1] = (-1)^{\area(\be)} e_\be$ for any composition $\be$, so
\[
\Omega(h^\otimes_\de) = (-1)^{\area(\de|_1)}e_{\de|_1}\otimes (-1)^{\area(\de|_2)}e_{\de|_2}\otimes \cdots= (-1)^{\area(\de)} e^\otimes_\de.
\]
The proof for $\Omega(e^\otimes_\de) = (-1)^{\area(\de)} h^\otimes_\de$ is identical.
\end{proof}
\subsection{Transition Matrices}
Let $n$ be a non-negative integer. Suppose $\{F_\tau\}_{\tau\in \Typ(n)}$ and $\{G_\tau\}_{\tau\in \Typ(n)}$  are bases of $\psym(n)$. Suppose for each $\sigma\in \Typ(n)$, we have the unique expansions
\[
F_\sigma = \sum\limits_{\tau\in \Typ(n)} a_{\tau, \sigma} G_\tau.
\]
We can record the coefficients appearing in the expansions using the \textit{transition matrix from $\{F_\tau\}_{\tau\in \Typ(n)}$ to $\{G_\tau\}_{\tau\in \Typ(n)}$} which we denote by $\mcM_n(F,G)$. The entry in column $\sigma$ and row $\tau$ records the coefficient $a_{\tau,\si}$ of $G_\tau$ in the $G$-expansion of $F_\sigma$. We present example transition matrices for $n = 4$ for bases dealt with in this paper in Section \ref{sec:matrices}.

%Sep 4 2026
\section{The $h^\otimes$-expansion and the $e^\otimes$-expansion of $P_\tau$}\label{sec:P-he-wrapping}
In this section, we find and combinatorially interpret the transition matrices $\mcM_n(P,h^\otimes)$ and $\mcM_n(P,e^\otimes)$ for $n\geq 1$. We first prove the $h^\otimes$-expansion of $P_{d^r}$ and provide a combinatorial for it. We then use this interpretation to find the $h^\ox$-expansion of $P_\si$ for types $\si$. We then apply $\Omega$ to the $h^\otimes$-expansion of $P_\tau$ to obtain the $e^\otimes$-expansion of $P_\tau$.

First, we rewrite expansion (4) from Proposition \ref{prop:dr-expansions} using the definition of $p^\otimes_\tau$:
\[P_{d^r} = \sum\limits_{k|d} k p_{dr/k}(\x_{k*}).\]

For a composition $\be$, $L(\be)$ is the last entry of $\be$. For instance, if $\be = (3,4,1,4)$, then $L(\be) = 4$. From \cite{eg-rem}, we can deduce the composition-indexed $h$-expansion of $p_n$ for $n\geq 1$ to be
\[
p_n = \sum\limits_{\alpha\in \Com(n)} (-1)^{\ell(\al)-1} L(\al) h_\al.
\]
Plugging the composition-indexed $h$-expansion of $p_n$ into the $p^\otimes$-expansion of $P_{d^r}$ yields the following.
\begin{lemma}\label{lem:P-dr-h}
For integers $d,r\geq 1$,
\[
P_{d^r} = \sum\limits_{k\mid d} k \sum_{\be\in \Com(dr/k)} (-1)^{\ell(\be)-1} L(\be) h^\otimes_{k^\beta}.
\]
\end{lemma}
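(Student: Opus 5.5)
The plan is a direct substitution. Start from part (4) of Proposition \ref{prop:dr-expansions}, rewritten just before the lemma as
\[
P_{d^r} = \sum_{k\mid d} k\, p_{dr/k}(\x_{k*}).
\]
For each divisor $k$ of $d$, the factor $p_{dr/k}(\x_{k*})$ is the pure tensor whose only nontrivial factor is $p_{dr/k}$, placed in the $k$th tensor position. That is, it is $p^\ox_{k^{dr/k}}$.

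Next, apply the composition-indexed $h$-expansion of the power sum quoted from \cite{eg-rem}, with $n = dr/k$:
\[
p_{dr/k} = \sum_{\be\in\Com(dr/k)} (-1)^{\ell(\be)-1} L(\be)\, h_\be .
\]
Then embed this in the $k$th copy $\Sym^{(k)}$ by replacing each $x_j$ with $x_{kj}$. The map $g\mapsto g(\x_{k*})$ is a $\QQ$-algebra homomorphism $\Sym\to\psym$. Hence it carries $h_\be = h_{\be_1}\cdots h_{\be_l}$ to $h_{\be_1}(\x_{k*})\cdots h_{\be_l}(\x_{k*})$, which is the pure tensor $1\ox\cdots\ox 1\ox h_\be\ox 1\ox\cdots$ with $h_\be$ in position $k$.

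The one point to check is that this pure tensor equals $h^\ox_{k^\be}$. Here $k^\be$ is the ordered type $\de$ with $\de|_k=\be$ and $\de|_i=\vn$ for $i\ne k$. Its size is $k\,\area(\be)=k\cdot dr/k=dr$, so it is a valid ordered type of $dr$. By the definition $h^\ox_\de = h_{\de|_1}\ox h_{\de|_2}\ox\cdots$ with $h_\vn=1$, we get exactly $h^\ox_{k^\be}$.

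Finally, multiply by $k$ and sum over the divisors $k\mid d$ using linearity. This gives the stated formula.

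No real obstacle is expected. The only care needed is the bookkeeping above: the embedding into the $k$th tensor factor must respect products, and the ordered-type notation $k^\be$ must match the pure tensor obtained.
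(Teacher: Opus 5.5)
Your proposal is correct and matches the paper's argument. The paper obtains the lemma in one line by plugging the E\u{g}ecio\u{g}lu--Remmel composition-indexed $h$-expansion of $p_{dr/k}$ into $P_{d^r} = \sum_{k\mid d} k\, p_{dr/k}(\x_{k*})$. Your extra check that the embedding into the $k$th tensor factor sends $h_\beta$ to $h^\otimes_{k^\beta}$ is precisely the step the paper leaves implicit.
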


We now interpret the $h^\otimes$-expansion of $P_{d^r}$ combinatorially via $k$-wrappings of the block $d^r$. A \textit{$k$-sheet} in a block $\bdg(d^r)$ is a collection of $k$ contiguous rows of $\bdg(d^r)$. We partition $d$ rows of the block diagram $\bdg(d^r)$ into $d/k$ \text{$k$-sheets}, each consisting of $k$ consecutive rows starting from top. Thus the first $k$-sheet consists of rows $1,2,\ldots, k$, the second $k$-sheet consists of rows $k+1, \ldots, 2k$, and so on. For a composition $\beta = (\be_1, \be_2, \ldots, \be_l)$ of $dr/k$, a \textit{$k$-wrapping of the block $d^r$ of content $\be$} is a partition\footnote{The term partition here is to be understood in the sense of set partitions and not integer partitions.} $(B_1, B_2, \ldots, B_l)$ of the boxes of $\bdg(d^r)$ as follows:
\begin{enumerate}
\item Suppose $\be_1 = w_1 r + e_1$ where $0 \leq e_1 < r$. The partition $B_1$ consists of all top $w_1$ $k$-sheets and $e_1$ columns of the $(w_1 + 1)$st $k$-sheet.
\item Suppose $\be_1 + \be_2 = w_2r + e_2$ where $0 \leq e_2 < r$. The partition $B_1\cup B_2$ consists of all top $w_2$ $k$-sheets and $e_2$ columns of the $(w_2 + 1)$st $k$-sheet. We find the partition $B_2$ by removing the boxes in $B_1$ from $B_1 \cup B_2$.
\item We continue this process for all $\be_1 + \be_2 + \ldots + \be_k$ for $k = 1,2,\ldots, l$. As $\be$ is a composition of a multiple of $r$, $e_l$ defined as \[|\be| = \be_1 + \be_2 + \ldots + \be_l = w_l r + e_l\] for $0\leq e_l < r$ must equal 0.
\end{enumerate}
We consider column $r$ in each $k$-sheet $f$ and column 1 in each $k$-sheet $f + 1$ to be adjacent for all $f = 1,2,\ldots, d/k-1$. We define a \textit{rectangle} as follows: place two vertical lines of height $k$ in two (possibly same) $k$-sheets. From the end points of the first vertical line we draw two parallel horizontal lines going to the right. When these horizontal lines touch the edge of the block diagram, we start drawing horizontal lines separated by the same distance from the left edge of the $k$-sheet immediately below. These lines continue onwards until they reach the endpoints of the second vertical line. The area bounded by this figure spanning possibly mutliple $k$-sheets is called a rectangle.
We visualize the $k$-wrapping $B = (B_1, B_2, \ldots, B_l)$ by covering the boxes contained in each $B_i$ for $i= 1,2,\ldots, l$ with a rectangle. For the $k$-wrapping $B = (B_1, B_2, \ldots, B_l)$, define $\ell(B) = l$, $\sgn(B) = (-1)^{\ell(B)-1}$ and $\mcL(B) = |B_l| = k\be_l$. Define the \textit{ordered content} $\ocon(B)$ to be the ordered type $k^\be$ where $\be = (\be_1, \be_2, \ldots, \be_l)$ is the composition defined as follows: each entry $\be_i$ counts the total number of columns spanned by $B_i$, where we count a column $f$ times if it appears in $f$ $k$-sheets.

\begin{example}
The following object $B$ is a 3-wrapping of the block $12^5$ with ordered content $(2,4,3,8,3)$. We find $\sgn(B) = (-1)^{5-1} = 1$ and $\mcL(B) = 3$. Note that $B_4$ spans columns 1, 2 and 5 twice, and columns 3 and 4 once. Thus, we find $\be_4 = 2 + 2 + 1 + 1 + 2 =8$.
\begin{center}
\raisebox{12.0pt}{\begin{tikzpicture}[scale =0.300,baseline=(current bounding box.north)]
\draw[step = 1.0, gray, thin] (0,0) grid (5,-12);
\draw (0.250,-0.250) -- (0.250,-2.75);
\draw (0.250,-0.250) -- (1.25,-0.250);
\draw (0.250,-2.75) -- (1.25,-2.75);
\draw (1.25,-0.250) -- (1.75,-0.250);
\draw (1.25,-2.75) -- (1.75,-2.75);
\draw (1.75,-0.250) -- (1.75,-2.75);
\draw (2.25,-0.250) -- (2.25,-2.75);
\draw (2.25,-0.250) -- (3.25,-0.250);
\draw (2.25,-2.75) -- (3.25,-2.75);
\draw (3.25,-0.250) -- (4.25,-0.250);
\draw (3.25,-2.75) -- (4.25,-2.75);
\draw (4.25,-0.250) -- (5.25,-0.250);
\draw (4.25,-2.75) -- (5.25,-2.75);
\draw (-0.25,-3.25) -- (0.25,-3.25);
\draw (-0.25,-5.75) -- (0.25,-5.75);
\draw (0.250,-3.25) -- (0.750,-3.25);
\draw (0.250,-5.75) -- (0.750,-5.75);
\draw (0.750,-3.25) -- (0.750,-5.75);
\draw (1.25,-3.25) -- (1.25,-5.75);
\draw (1.25,-3.25) -- (2.25,-3.25);
\draw (1.25,-5.75) -- (2.25,-5.75);
\draw (2.25,-3.25) -- (3.25,-3.25);
\draw (2.25,-5.75) -- (3.25,-5.75);
\draw (3.25,-3.25) -- (3.75,-3.25);
\draw (3.25,-5.75) -- (3.75,-5.75);
\draw (3.75,-3.25) -- (3.75,-5.75);
\draw (4.25,-3.25) -- (4.25,-5.75);
\draw (4.25,-3.25) -- (5.25,-3.25);
\draw (4.25,-5.75) -- (5.25,-5.75);
\draw (-0.25,-6.25) -- (0.25,-6.25);
\draw (-0.25,-8.75) -- (0.25,-8.75);
\draw (0.250,-6.25) -- (1.25,-6.25);
\draw (0.250,-8.75) -- (1.25,-8.75);
\draw (1.25,-6.25) -- (2.25,-6.25);
\draw (1.25,-8.75) -- (2.25,-8.75);
\draw (2.25,-6.25) -- (3.25,-6.25);
\draw (2.25,-8.75) -- (3.25,-8.75);
\draw (3.25,-6.25) -- (4.25,-6.25);
\draw (3.25,-8.75) -- (4.25,-8.75);
\draw (4.25,-6.25) -- (5.25,-6.25);
\draw (4.25,-8.75) -- (5.25,-8.75);
\draw (-0.25,-9.25) -- (0.25,-9.25);
\draw (-0.25,-11.8) -- (0.25,-11.8);
\draw (0.250,-9.25) -- (1.25,-9.25);
\draw (0.250,-11.8) -- (1.25,-11.8);
\draw (1.25,-9.25) -- (1.75,-9.25);
\draw (1.25,-11.8) -- (1.75,-11.8);
\draw (1.75,-9.25) -- (1.75,-11.8);
\draw (2.25,-9.25) -- (2.25,-11.8);
\draw (2.25,-9.25) -- (3.25,-9.25);
\draw (2.25,-11.8) -- (3.25,-11.8);
\draw (3.25,-9.25) -- (4.25,-9.25);
\draw (3.25,-11.8) -- (4.25,-11.8);
\draw (4.25,-9.25) -- (4.75,-9.25);
\draw (4.25,-11.8) -- (4.75,-11.8);
\draw (4.75,-9.25) -- (4.75,-11.8);
\end{tikzpicture}}
\end{center}
\end{example}

A \textit{wrapping of $d^r$} is any $k$-wrapping of $d^r$ for some $k$ dividing $d$. Denote the set of wrappings of $d^r$ by $\wrap(d^r)$.

%6 Sep 2026 00:07
\begin{prop}\label{prop:P-dr-comb}
For positive integers $d$ and $r$,
\[
P_{d^r} = \sum_{B\in \wrap(d^r)} \sgn(B) \mcL(B) h^\otimes_{{\ocon(B)}}.
\]
\end{prop}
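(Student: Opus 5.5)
The plan is to derive Proposition \ref{prop:P-dr-comb} directly from Lemma \ref{lem:P-dr-h} by a term-by-term bijection. Fix a divisor $k$ of $d$. Let $\wrap_k(d^r)$ denote the set of $k$-wrappings of $d^r$; then $\wrap(d^r)$ is the disjoint union of the $\wrap_k(d^r)$ over $k\mid d$. One point needs care here. If the same set partition of boxes arises for two different $k$, its ordered content differs (different degree $k$), so we treat a wrapping as the pair consisting of $k$ and the set partition. It therefore suffices to show, for each $k\mid d$,
\[
\sum_{B\in \wrap_k(d^r)} \sgn(B)\mcL(B) h^\otimes_{\ocon(B)} = k\sum_{\be\in\Com(dr/k)} (-1)^{\ell(\be)-1}L(\be) h^\otimes_{k^\be}.
\]

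The key step is to show that the map $\Phi_k:\Com(dr/k)\to\wrap_k(d^r)$, which sends $\be$ to the $k$-wrapping of content $\be$ built by steps (1)--(3), is a bijection whose inverse is $B\mapsto$ the composition underlying $\ocon(B)$. For well-definedness, note that the $k$-sheets, read in order with column $r$ of sheet $f$ adjacent to column $1$ of sheet $f+1$, form a linear sequence of $(d/k)\cdot r = dr/k$ columns of height $k$ (call them sheet-columns). The partial sums $\be_1+\cdots+\be_i = w_i r+e_i$ correspond exactly to cutting this sequence after sheet-column $w_i r+e_i$. So $B_i$ consists of sheet-columns $\be_1+\cdots+\be_{i-1}+1$ through $\be_1+\cdots+\be_i$. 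These sets are nonempty because $\be_i\ge1$, they are contiguous (hence each is a rectangle), and they partition the boxes because the final partial sum equals $dr/k$. For the inverse, the number of sheet-columns in $B_i$, counted with multiplicity across sheets, is precisely the $i$th entry $\be_i$ of $\ocon(B)$ as defined. Conversely, any $k$-wrapping is by definition of this form. Hence $\ocon(\Phi_k(\be)) = k^\be$ and $\Phi_k$ is bijective.

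Finally I will match the weights. We have $\ell(\Phi_k(\be))=\ell(\be)$, so $\sgn(\Phi_k(\be))=(-1)^{\ell(\be)-1}$. Also $\mcL(\Phi_k(\be))=|B_l|=k\be_l=kL(\be)$, since $B_l$ consists of $\be_l$ sheet-columns each containing $k$ boxes. Substituting these into the left side gives $k\sum_\be(-1)^{\ell(\be)-1}L(\be)h^\otimes_{k^\be}$. Summing over $k\mid d$ and invoking Lemma \ref{lem:P-dr-h} then completes the proof. I expect the only real obstacle to be bookkeeping: making precise that the informal "wrapping" description of steps (1)--(3) and of rectangles is equivalent to cutting the linear sequence of sheet-columns, and confirming that the column-counting definition of $\ocon$ agrees with the counts of sheet-columns. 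I would handle this by checking the sheet-column description explicitly against step (2), including the edge case $e_i=0$.
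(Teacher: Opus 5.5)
Your proposal is correct and follows essentially the paper's proof. Both arguments match each pair $(k,\be)$ with $k\mid d$ and $\be\in\Com(dr/k)$ from Lemma~\ref{lem:P-dr-h} to the unique $k$-wrapping of ordered content $k^\be$, and then check that $\sgn(B)=(-1)^{\ell(\be)-1}$ and $\mcL(B)=kL(\be)$. Your linearization into $dr/k$ sheet-columns, and your remark that a wrapping must remember its $k$, make precise two points the paper leaves implicit; for the latter, note that the same set partition of boxes already arises for $d=4$, $r=1$ with $k=1$, $\be=(2,2)$ and with $k=2$, $\be=(1,1)$.
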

\begin{proof}
We show that the above expression is equivalent to the $h^\ox$-expansion in Lemma \ref{lem:P-dr-h}.
We rewrite the $h^\ox$-expansion of $P_{d^r}$ from Lemma \ref{lem:P-dr-h} as \[P_{d^r} = \sum\limits_{k\mid d} \sum_{\be\in \Com(dr/k)} (-1)^{\ell(\be)-1} k L(\be) h^\otimes_{k^\beta}.\]
With each term $(-1)^{\ell(\beta)} kL(\beta) h^\otimes_{k^\beta}$ in the $h^\otimes$-expansion of $P_{d^r}$, we associate the $k$-wrapping $B = (B_1, B_2, \ldots, B_l)$ of $d^r$ of ordered content $k^\beta$. As $\ell(\beta)$ is also the number of entries in the $k$-wrapping $B$, $(-1)^{\ell(\beta)-1} = \sgn(B)$. As $B_l$ spans $\beta_l = L(\be)$ columns, each of height $k$, the number of boxes in $B_l$ is $|B_l| = k L(\beta)$. This shows that for a chosen composition $\beta$ and the $k$-wrapping $B$ of $d^r$ of ordered content $k^\be$, $(-1)^{\ell(\beta)} kL(\beta) h^\otimes_{k^\beta} =  \sgn(B) \mcL(B)h^\ox_{\ocon(B)}$. 

On the other hand, start with a $k$-wrapping $B = (B_1, B_2, \ldots, B_l)$. As each rectangle has the same height, the height must divide the total height of the array, thus $k$ must divide $d$. Furthermore, by recording the number of columns with multiplicity spanned by $B_i$, the ordered content $\ocon(B)$ is $k^\al$ for the composition $\al = (\al_1, \al_2, \ldots, \al_l)$ of $dr/k$. We also find $\sgn(B) = (-1)^{\ell(\al) - 1}$ and $\mcL(B) = |B_l| = kL(\alpha)$. Thus, the $k$-wrapping $B$ gives rise to the term $h^{\otimes}_{k^\alpha}$ with the coefficient $\sgn(B)\mcL(B) = (-1)^{\ell(\alpha)-1} k L(\alpha)$.
\end{proof}

\begin{example}
The coefficient of $h^\ox_{2^{2,2,1,1}}$ in $P_{4^3}$ is $-18$. We find $\sgn(B_i) = -1$ for $i = 1,2,\ldots, 6$.  Also $\mcL(B_1) = \mcL(B_2) = \mcL(B_3) = 2\cdot 2$ as the last rectangle spans 2 columns and has height 2, and $\mcL(B_4) = \mcL(B_5) = \mcL(B_6) = 2\cdot 1$ as the last rectangle spans 1 column and has height 2.
\[
B_1 = \raisebox{12.0pt}{\begin{tikzpicture}[scale =0.300,baseline=(current bounding box.north)]
\draw[step = 1.0, gray, thin] (0,0) grid (3,-4);
\draw (0.250,-0.250) -- (0.250,-1.75);
\draw (0.250,-0.250) -- (0.750,-0.250);
\draw (0.250,-1.75) -- (0.750,-1.75);
\draw (0.750,-0.250) -- (0.750,-1.75);
\draw (1.25,-0.250) -- (1.25,-1.75);
\draw (1.25,-0.250) -- (1.75,-0.250);
\draw (1.25,-1.75) -- (1.75,-1.75);
\draw (1.75,-0.250) -- (1.75,-1.75);
\draw (2.25,-0.250) -- (2.25,-1.75);
\draw (2.25,-0.250) -- (3.25,-0.250);
\draw (2.25,-1.75) -- (3.25,-1.75);
\draw (-0.25,-2.25) -- (0.25,-2.25);
\draw (-0.25,-3.75) -- (0.25,-3.75);
\draw (0.250,-2.25) -- (0.750,-2.25);
\draw (0.250,-3.75) -- (0.750,-3.75);
\draw (0.750,-2.25) -- (0.750,-3.75);
\draw (1.25,-2.25) -- (1.25,-3.75);
\draw (1.25,-2.25) -- (2.25,-2.25);
\draw (1.25,-3.75) -- (2.25,-3.75);
\draw (2.25,-2.25) -- (2.75,-2.25);
\draw (2.25,-3.75) -- (2.75,-3.75);
\draw (2.75,-2.25) -- (2.75,-3.75);
\end{tikzpicture}}\quad 
B_2 = \raisebox{12.0pt}{\begin{tikzpicture}[scale =0.300,baseline=(current bounding box.north)]
\draw[step = 1.0, gray, thin] (0,0) grid (3,-4);
\draw (0.250,-0.250) -- (0.250,-1.75);
\draw (0.250,-0.250) -- (0.750,-0.250);
\draw (0.250,-1.75) -- (0.750,-1.75);
\draw (0.750,-0.250) -- (0.750,-1.75);
\draw (1.25,-0.250) -- (1.25,-1.75);
\draw (1.25,-0.250) -- (2.25,-0.250);
\draw (1.25,-1.75) -- (2.25,-1.75);
\draw (2.25,-0.250) -- (2.75,-0.250);
\draw (2.25,-1.75) -- (2.75,-1.75);
\draw (2.75,-0.250) -- (2.75,-1.75);
\draw (0.250,-2.25) -- (0.250,-3.75);
\draw (0.250,-2.25) -- (0.750,-2.25);
\draw (0.250,-3.75) -- (0.750,-3.75);
\draw (0.750,-2.25) -- (0.750,-3.75);
\draw (1.25,-2.25) -- (1.25,-3.75);
\draw (1.25,-2.25) -- (2.25,-2.25);
\draw (1.25,-3.75) -- (2.25,-3.75);
\draw (2.25,-2.25) -- (2.75,-2.25);
\draw (2.25,-3.75) -- (2.75,-3.75);
\draw (2.75,-2.25) -- (2.75,-3.75);
\end{tikzpicture}}\quad B_3 = \raisebox{12.0pt}{\begin{tikzpicture}[scale =0.300,baseline=(current bounding box.north)]
\draw[step = 1.0, gray, thin] (0,0) grid (3,-4);
\draw (0.250,-0.250) -- (0.250,-1.75);
\draw (0.250,-0.250) -- (1.25,-0.250);
\draw (0.250,-1.75) -- (1.25,-1.75);
\draw (1.25,-0.250) -- (1.75,-0.250);
\draw (1.25,-1.75) -- (1.75,-1.75);
\draw (1.75,-0.250) -- (1.75,-1.75);
\draw (2.25,-0.250) -- (2.25,-1.75);
\draw (2.25,-0.250) -- (2.75,-0.250);
\draw (2.25,-1.75) -- (2.75,-1.75);
\draw (2.75,-0.250) -- (2.75,-1.75);
\draw (0.250,-2.25) -- (0.250,-3.75);
\draw (0.250,-2.25) -- (0.750,-2.25);
\draw (0.250,-3.75) -- (0.750,-3.75);
\draw (0.750,-2.25) -- (0.750,-3.75);
\draw (1.25,-2.25) -- (1.25,-3.75);
\draw (1.25,-2.25) -- (2.25,-2.25);
\draw (1.25,-3.75) -- (2.25,-3.75);
\draw (2.25,-2.25) -- (2.75,-2.25);
\draw (2.25,-3.75) -- (2.75,-3.75);
\draw (2.75,-2.25) -- (2.75,-3.75);
\end{tikzpicture}}
\]
\[
B_4 = \raisebox{12.0pt}{\begin{tikzpicture}[scale =0.300,baseline=(current bounding box.north)]
\draw[step = 1.0, gray, thin] (0,0) grid (3,-4);
\draw (0.250,-0.250) -- (0.250,-1.75);
\draw (0.250,-0.250) -- (0.750,-0.250);
\draw (0.250,-1.75) -- (0.750,-1.75);
\draw (0.750,-0.250) -- (0.750,-1.75);
\draw (1.25,-0.250) -- (1.25,-1.75);
\draw (1.25,-0.250) -- (2.25,-0.250);
\draw (1.25,-1.75) -- (2.25,-1.75);
\draw (2.25,-0.250) -- (2.75,-0.250);
\draw (2.25,-1.75) -- (2.75,-1.75);
\draw (2.75,-0.250) -- (2.75,-1.75);
\draw (0.250,-2.25) -- (0.250,-3.75);
\draw (0.250,-2.25) -- (1.25,-2.25);
\draw (0.250,-3.75) -- (1.25,-3.75);
\draw (1.25,-2.25) -- (1.75,-2.25);
\draw (1.25,-3.75) -- (1.75,-3.75);
\draw (1.75,-2.25) -- (1.75,-3.75);
\draw (2.25,-2.25) -- (2.25,-3.75);
\draw (2.25,-2.25) -- (2.75,-2.25);
\draw (2.25,-3.75) -- (2.75,-3.75);
\draw (2.75,-2.25) -- (2.75,-3.75);
\end{tikzpicture}}\quad B_5 = \raisebox{12.0pt}{\begin{tikzpicture}[scale =0.300,baseline=(current bounding box.north)]
\draw[step = 1.0, gray, thin] (0,0) grid (3,-4);
\draw (0.250,-0.250) -- (0.250,-1.75);
\draw (0.250,-0.250) -- (0.750,-0.250);
\draw (0.250,-1.75) -- (0.750,-1.75);
\draw (0.750,-0.250) -- (0.750,-1.75);
\draw (1.25,-0.250) -- (1.25,-1.75);
\draw (1.25,-0.250) -- (2.25,-0.250);
\draw (1.25,-1.75) -- (2.25,-1.75);
\draw (2.25,-0.250) -- (2.75,-0.250);
\draw (2.25,-1.75) -- (2.75,-1.75);
\draw (2.75,-0.250) -- (2.75,-1.75);
\draw (0.250,-2.25) -- (0.250,-3.75);
\draw (0.250,-2.25) -- (1.25,-2.25);
\draw (0.250,-3.75) -- (1.25,-3.75);
\draw (1.25,-2.25) -- (1.75,-2.25);
\draw (1.25,-3.75) -- (1.75,-3.75);
\draw (1.75,-2.25) -- (1.75,-3.75);
\draw (2.25,-2.25) -- (2.25,-3.75);
\draw (2.25,-2.25) -- (2.75,-2.25);
\draw (2.25,-3.75) -- (2.75,-3.75);
\draw (2.75,-2.25) -- (2.75,-3.75);
\end{tikzpicture}} \quad B_6 = \raisebox{12.0pt}{\begin{tikzpicture}[scale =0.300,baseline=(current bounding box.north)]
\draw[step = 1.0, gray, thin] (0,0) grid (3,-4);
\draw (0.250,-0.250) -- (0.250,-1.75);
\draw (0.250,-0.250) -- (1.25,-0.250);
\draw (0.250,-1.75) -- (1.25,-1.75);
\draw (1.25,-0.250) -- (1.75,-0.250);
\draw (1.25,-1.75) -- (1.75,-1.75);
\draw (1.75,-0.250) -- (1.75,-1.75);
\draw (2.25,-0.250) -- (2.25,-1.75);
\draw (2.25,-0.250) -- (3.25,-0.250);
\draw (2.25,-1.75) -- (3.25,-1.75);
\draw (-0.25,-2.25) -- (0.25,-2.25);
\draw (-0.25,-3.75) -- (0.25,-3.75);
\draw (0.250,-2.25) -- (0.750,-2.25);
\draw (0.250,-3.75) -- (0.750,-3.75);
\draw (0.750,-2.25) -- (0.750,-3.75);
\draw (1.25,-2.25) -- (1.25,-3.75);
\draw (1.25,-2.25) -- (1.75,-2.25);
\draw (1.25,-3.75) -- (1.75,-3.75);
\draw (1.75,-2.25) -- (1.75,-3.75);
\draw (2.25,-2.25) -- (2.25,-3.75);
\draw (2.25,-2.25) -- (2.75,-2.25);
\draw (2.25,-3.75) -- (2.75,-3.75);
\draw (2.75,-2.25) -- (2.75,-3.75);
\end{tikzpicture}}
\]
\end{example}
Let $\si$ and $\tau$ be types of a positive integer $n$. Define a \textit{constant wrapping block tabloid} $T$ of shape $\sigma = d_1^{r_1} d_2^{r_2}\ldots d_k^{r_k}$ and content $\tau = e_1^{s_1}e_2^{s_2}\ldots e_l^{s_l}$ as simultaneous wrappings of all blocks $d_1^{r_1}, d_2^{r_2},\ldots, d_k^{r_k}$ using rectangles of height $e_i$ which span $s_i$ columns (possibly across multiple rows) for $i = 1,2,\ldots, l$. Define $\wt(T)$ to be the product of number of boxes contained in the last rectangles in the blocks $d_i^{r_i}$ for $i = 1,2,\ldots, k$.

Equivalently a constant wrapping block tabloid $T$ of shape $\si$ and content $\tau$ is a list $(B^{(1)}, B^{(2)}, \ldots, B^{(k)})$ where each $B^{(i)}$ is a wrapping of $d_i^{r_i}$ and the multiset union $\bigcup_{i=1}^k \psort(\ocon(B^{(i)}))$ equals $\tau$. Define $\sgn(T) = (-1)^{\ell(\tau)-\ell(\si)}$ and $\wt(T) = \prod_{i=1}^k \mcL(B^{(i)})$.

\begin{example}
The following constant wrapping block tabloid has shape $\sigma = 6^4 4^3 4^3$ and content $\tau = 2^{3,3,2,2,2,2,2,1,1}1^{5,3,1,3}$:
\begin{center}
$T =$
\raisebox{16.0pt}{\begin{tikzpicture}[scale =0.400,baseline=(current bounding box.north)]
\draw[step = 1.0, gray, thin] (0,0) grid (4,-6);
\draw (0.250,-0.250) -- (0.250,-1.75);
\draw (0.250,-0.250) -- (1.25,-0.250);
\draw (0.250,-1.75) -- (1.25,-1.75);
\draw (1.25,-0.250) -- (2.25,-0.250);
\draw (1.25,-1.75) -- (2.25,-1.75);
\draw (2.25,-0.250) -- (2.75,-0.250);
\draw (2.25,-1.75) -- (2.75,-1.75);
\draw (2.75,-0.250) -- (2.75,-1.75);
\draw (3.25,-0.250) -- (3.25,-1.75);
\draw (3.25,-0.250) -- (3.75,-0.250);
\draw (3.25,-1.75) -- (3.75,-1.75);
\draw (3.75,-0.250) -- (3.75,-1.75);
\draw (0.250,-2.25) -- (0.250,-3.75);
\draw (0.250,-2.25) -- (1.25,-2.25);
\draw (0.250,-3.75) -- (1.25,-3.75);
\draw (1.25,-2.25) -- (2.25,-2.25);
\draw (1.25,-3.75) -- (2.25,-3.75);
\draw (2.25,-2.25) -- (2.75,-2.25);
\draw (2.25,-3.75) -- (2.75,-3.75);
\draw (2.75,-2.25) -- (2.75,-3.75);
\draw (3.25,-2.25) -- (3.25,-3.75);
\draw (3.25,-2.25) -- (4.25,-2.25);
\draw (3.25,-3.75) -- (4.25,-3.75);
\draw (-0.25,-4.25) -- (0.25,-4.25);
\draw (-0.25,-5.75) -- (0.25,-5.75);
\draw (0.250,-4.25) -- (0.750,-4.25);
\draw (0.250,-5.75) -- (0.750,-5.75);
\draw (0.750,-4.25) -- (0.750,-5.75);
\draw (1.25,-4.25) -- (1.25,-5.75);
\draw (1.25,-4.25) -- (1.75,-4.25);
\draw (1.25,-5.75) -- (1.75,-5.75);
\draw (1.75,-4.25) -- (1.75,-5.75);
\draw (2.25,-4.25) -- (2.25,-5.75);
\draw (2.25,-4.25) -- (3.25,-4.25);
\draw (2.25,-5.75) -- (3.25,-5.75);
\draw (3.25,-4.25) -- (3.75,-4.25);
\draw (3.25,-5.75) -- (3.75,-5.75);
\draw (3.75,-4.25) -- (3.75,-5.75);
\end{tikzpicture}}
\quad
\raisebox{16.0pt}{\begin{tikzpicture}[scale =0.400,baseline=(current bounding box.north)]
\draw[step = 1.0, gray, thin] (0,0) grid (3,-4);
\draw (0.250,-0.250) -- (0.250,-0.750);
\draw (0.250,-0.250) -- (1.25,-0.250);
\draw (0.250,-0.750) -- (1.25,-0.750);
\draw (1.25,-0.250) -- (2.25,-0.250);
\draw (1.25,-0.750) -- (2.25,-0.750);
\draw (2.25,-0.250) -- (3.25,-0.250);
\draw (2.25,-0.750) -- (3.25,-0.750);
\draw (-0.25,-1.25) -- (0.25,-1.25);
\draw (-0.25,-1.75) -- (0.25,-1.75);
\draw (0.250,-1.25) -- (1.25,-1.25);
\draw (0.250,-1.75) -- (1.25,-1.75);
\draw (1.25,-1.25) -- (1.75,-1.25);
\draw (1.25,-1.75) -- (1.75,-1.75);
\draw (1.75,-1.25) -- (1.75,-1.75);
\draw (2.25,-1.25) -- (2.25,-1.75);
\draw (2.25,-1.25) -- (3.25,-1.25);
\draw (2.25,-1.75) -- (3.25,-1.75);
\draw (-0.25,-2.25) -- (0.25,-2.25);
\draw (-0.25,-2.75) -- (0.25,-2.75);
\draw (0.250,-2.25) -- (1.25,-2.25);
\draw (0.250,-2.75) -- (1.25,-2.75);
\draw (1.25,-2.25) -- (1.75,-2.25);
\draw (1.25,-2.75) -- (1.75,-2.75);
\draw (1.75,-2.25) -- (1.75,-2.75);
\draw (2.25,-2.25) -- (2.25,-2.75);
\draw (2.25,-2.25) -- (2.75,-2.25);
\draw (2.25,-2.75) -- (2.75,-2.75);
\draw (2.75,-2.25) -- (2.75,-2.75);
\draw (0.250,-3.25) -- (0.250,-3.75);
\draw (0.250,-3.25) -- (1.25,-3.25);
\draw (0.250,-3.75) -- (1.25,-3.75);
\draw (1.25,-3.25) -- (2.25,-3.25);
\draw (1.25,-3.75) -- (2.25,-3.75);
\draw (2.25,-3.25) -- (2.75,-3.25);
\draw (2.25,-3.75) -- (2.75,-3.75);
\draw (2.75,-3.25) -- (2.75,-3.75);
\end{tikzpicture}} \quad \raisebox{16.0pt}{\begin{tikzpicture}[scale =0.400,baseline=(current bounding box.north)]
\draw[step = 1.0, gray, thin] (0,0) grid (3,-4);
\draw (0.250,-0.250) -- (0.250,-1.75);
\draw (0.250,-0.250) -- (1.25,-0.250);
\draw (0.250,-1.75) -- (1.25,-1.75);
\draw (1.25,-0.250) -- (1.75,-0.250);
\draw (1.25,-1.75) -- (1.75,-1.75);
\draw (1.75,-0.250) -- (1.75,-1.75);
\draw (2.25,-0.250) -- (2.25,-1.75);
\draw (2.25,-0.250) -- (3.25,-0.250);
\draw (2.25,-1.75) -- (3.25,-1.75);
\draw (-0.25,-2.25) -- (0.25,-2.25);
\draw (-0.25,-3.75) -- (0.25,-3.75);
\draw (0.250,-2.25) -- (0.750,-2.25);
\draw (0.250,-3.75) -- (0.750,-3.75);
\draw (0.750,-2.25) -- (0.750,-3.75);
\draw (1.25,-2.25) -- (1.25,-3.75);
\draw (1.25,-2.25) -- (2.25,-2.25);
\draw (1.25,-3.75) -- (2.25,-3.75);
\draw (2.25,-2.25) -- (2.75,-2.25);
\draw (2.25,-3.75) -- (2.75,-3.75);
\draw (2.75,-2.25) -- (2.75,-3.75);
\end{tikzpicture}} \end{center} 
We compute $\sgn(T) = (-1)^{11-3} = 1$ and $\wt(T) = 4\cdot 3 \cdot 4 = 48$.
\end{example}

\begin{theorem}\label{thm:P-in-htensor}
Let $\sigma$ and $\tau$ be types of $n\geq 1$. The coefficient of $h_\tau^\otimes$ in the $h^\otimes$-expansion of $P_\sigma$ is $\sum_T \sgn(T) \wt(T) =  (-1)^{\ell(\sigma) - \ell(\tau)} \sum_T \wt(T)$ where the sum is over all constant wrapping block tabloids $T$ of shape $\sigma$ and content $\tau$.
\end{theorem}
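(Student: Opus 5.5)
The plan is to deduce the theorem from Proposition \ref{prop:P-dr-comb} by multiplying out over the blocks of $\sigma$. Write $\sigma = d_1^{r_1} d_2^{r_2}\cdots d_k^{r_k}$ in block notation. By definition of $P_\sigma$ (an ordered product over its blocks, and $\psym$ is commutative), we have $P_\sigma = \prod_{i=1}^k P_{d_i^{r_i}}$. Substituting the expansion of Proposition \ref{prop:P-dr-comb} for each factor gives
\[
P_\sigma = \sum_{(B^{(1)},\ldots,B^{(k)})} \prod_{i=1}^k \sgn(B^{(i)})\mcL(B^{(i)}) \; \prod_{i=1}^k h^\otimes_{\ocon(B^{(i)})},
\]
where the sum is over all lists with $B^{(i)}\in \wrap(d_i^{r_i})$.

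Next I will identify the product of pure tensors. Each $\ocon(B^{(i)})$ is a single-degree ordered type $k_i^{\beta^{(i)}}$. Products of pure tensors multiply componentwise, and $h_\beta = h_{\sort(\beta)}$ holds in each tensor factor. Hence $\prod_i h^\otimes_{\ocon(B^{(i)})} = h^\otimes_{\tau'}$, where $\tau'$ is the type whose multiset of blocks is the multiset union of the blocks of the $\psort(\ocon(B^{(i)}))$. Collecting the coefficient of $h^\otimes_\tau$ (these form a basis, so the coefficient is well defined) therefore restricts the sum to lists with $\bigcup_i \psort(\ocon(B^{(i)})) = \tau$. These are exactly the constant wrapping block tabloids $T$ of shape $\sigma$ and content $\tau$, in the equivalent list formulation given before the theorem. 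The weight factor $\prod_i \mcL(B^{(i)})$ is $\wt(T)$ by definition.

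It remains to handle the sign, which is the only point needing care. We have $\prod_i \sgn(B^{(i)}) = (-1)^{\sum_i (\ell(B^{(i)})-1)}$. Since $\ell(B^{(i)})$ equals the number of blocks of $\ocon(B^{(i)})$, the sum $\sum_i \ell(B^{(i)})$ is the total number of blocks of $\tau$, namely $\ell(\tau)$. Also $k=\ell(\sigma)$. So the sign is $(-1)^{\ell(\tau)-\ell(\sigma)} = \sgn(T)$. It depends only on $\sigma$ and $\tau$, so it factors out of the sum, and $(-1)^{\ell(\tau)-\ell(\sigma)} = (-1)^{\ell(\sigma)-\ell(\tau)}$. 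This yields both forms of the stated coefficient.

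The main obstacle is bookkeeping rather than mathematics. I must check that the correspondence between lists $(B^{(1)},\ldots,B^{(k)})$ and tabloids $T$ is a bijection, with no overcounting from repeated blocks in $\sigma$. This holds because $T$ is defined as an ordered list indexed by the fixed block listing of $\sigma$, matching the ordered expansion of the product. I must also check that sorting within each degree is harmless, which follows from commutativity of $\sym$.
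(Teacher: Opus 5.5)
Your proposal is correct and follows the paper's proof essentially step for step. Both arguments expand $P_\sigma=\prod_i P_{d_i^{r_i}}$ using Proposition \ref{prop:P-dr-comb}, combine the pure tensors by taking the multiset union of blocks, identify lists of wrappings $(B^{(1)},\ldots,B^{(k)})$ with constant wrapping block tabloids, and get the sign from $\sum_i \ell(B^{(i)})=\ell(\tau)$. Your explicit check that this correspondence is a bijection with ordered lists, so repeated blocks in $\sigma$ cause no overcounting, is a point the paper only asserts in its final sentence.
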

\begin{proof}
From Proposition \ref{prop:P-dr-comb}, 
\begin{align*}
P_{d_1^{r_1}} P_{d_2^{r_2}}\cdots P_{d_k^{r_k}} &= \left( \sum_{B^{(1)}\in \wrap(d^r)} \sgn(B^{(1)}) \mcL(B^{(1)}) h^\otimes_{{\ocon(B^{(1)})}}\right) \\ & \left( \sum_{B^{(2)}\in \wrap(d^r)} \sgn(B^{(2)}) \mcL(B^{(2)}) h^\otimes_{{\ocon(B^{(2)})}}\right) \cdots\\ & \left( \sum_{B^{(k)}\in \wrap(d^r)} \sgn(B^{(k)}) \mcL(B^{(k)}) h^\otimes_{{\ocon(B^{(k)})}}\right) 
\end{align*}
 For the $h$-basis and two compositions $\be$ and $\ga$, $h_\be h_\ga = h_{\be\cup \ga}$ where $\be\cup \ga$ is the partition whose multiset of parts is the multiset union of the multisets of entries of $\be$ and $\ga$. The $h^\ox$-basis satisfies a similar property for types where for two ordered types $\de$ and $\rho$, $h^\ox_\de h^\ox_\rho = h^\ox_{\de\cup \rho}$, where we define $\de\cup \rho$ to be the type viewed as a multiset union of the multisets of blocks of $\de$ and $\rho$. Choosing the wrapping $B^{(i)}$ for $d_i^{r_i}$ for each $i = 1,2,\ldots, k$ yields a constant wrapping block tabloid $T$ with content $\bigcup\limits_{i=1}^k \ocon(B^{(i)})$ which we call $\tau$. Here the union is a multiset union. Thus for $T$, we obtain the associated term in the $h^\ox$-expansion as \[
 \left(\prod_{i=1}^k \sgn(B^{(i)}) \prod_{i=1}^k \mcL(B^{(i)}) \right) h^\ox_{\tau}. \quad (*) \]
From the multiset equality, $\sum_{i=1}^k \ell(B^{(i)}) = \ell(\tau)$ and so $\sum_{i=1}^k (\ell(B^{(i)})-1) = \ell(\tau) - \ell(\si)$. This shows $\sgn(T) = (-1)^{\ell(\tau) - \ell(\si)}$. Thus the above term (*) is $\sgn(T) \wt(T) h^\ox_\tau$ for $T$ a constant wrapping block tabloid of content $\tau$ and shape $\si$. The coefficient of $h^\ox_\tau$ is the sum over all constant wrapping block tabloids of content $\tau$ and shape $\si$. The proof of the theorem is completed by checking the claim that all constant wrapping block tabloids of content $\tau$ are obtained by independently selecting wrappings of blocks $d_i^{r_i}$.
\end{proof}

%UNF Acyclic and parking functions anywhere?

Recall that for an ordered type $\de$, $\area(\delta) = \sum_{i\geq 1} \area(\de|_i)$.
\begin{theorem}\label{thm:P-in-etensor}
Let $\sigma$ and $\tau$ be types of $n\geq 1$. The coefficient of $e_\tau^\otimes$ in the $e^\otimes$-expansion of $P_\sigma$ is $(-1)^{\area(\tau) - \ell(\tau)} \sum_T \wt(T)$ where the sum is over all constant wrapping block tabloids $T$ of shape $\sigma$ and content $\tau$.
\end{theorem}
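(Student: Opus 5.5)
We apply the involution $\Omega$ to the $h^\otimes$-expansion of $P_\sigma$ given in Theorem \ref{thm:P-in-htensor}, exactly as the section outline promises. Write $\sigma = d_1^{r_1}\cdots d_k^{r_k}$, so that $P_\sigma = P_{d_1^{r_1}}\cdots P_{d_k^{r_k}}$. Theorem \ref{thm:P-in-htensor} gives
\[
P_\sigma = \sum_{\tau\in\Typ(n)} (-1)^{\ell(\sigma)-\ell(\tau)}\Big(\sum_T \wt(T)\Big) h^\otimes_\tau ,
\]
where the inner sum runs over constant wrapping block tabloids of shape $\sigma$ and content $\tau$.

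First we compute $\Omega(P_\sigma)$. By \cite[Cor. 16]{khanna-bij}, $\Omega(P_\de)=(-1)^{\ell(\de)}P_\de$; in particular $\Omega(P_\sigma) = (-1)^{\ell(\sigma)}P_\sigma$. Here $\ell(\sigma)=k$ is the number of blocks. This is consistent with $\Omega$ being an algebra map, since it acts on each generator $P_{d^r}$ by $-1$.

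Next we apply $\Omega$ term by term to the right-hand side. Corollary \ref{cor:Omega-of-htensor} gives $\Omega(h^\otimes_\tau) = (-1)^{\area(\tau)} e^\otimes_\tau$. Linearity of $\Omega$ then yields
\[
(-1)^{\ell(\sigma)} P_\sigma = \sum_{\tau} (-1)^{\ell(\sigma)-\ell(\tau)+\area(\tau)}\Big(\sum_T\wt(T)\Big) e^\otimes_\tau .
\]
Multiplying both sides by $(-1)^{\ell(\sigma)}$, the coefficient of $e^\otimes_\tau$ becomes $(-1)^{\area(\tau)-\ell(\tau)}\sum_T \wt(T)$, using $(-1)^{2\ell(\sigma)}=1$ and $(-1)^{-\ell(\tau)}=(-1)^{\ell(\tau)}$. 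Since $\{e^\otimes_\tau\}_{\tau\in\Typ(n)}$ is a basis of $\PSym(n)$, this expansion is the unique one, which proves the claim.

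The only step needing care is the sign bookkeeping. One must confirm that $\ell(\tau)$ for a type counts blocks, i.e.\ $\sum_i \ell(\tau|_i)$, and that $\area(\tau)$ is $\sum_i \area(\tau|_i)$, matching the exponent in Corollary \ref{cor:Omega-of-htensor}. One must also confirm that the sign of $\Omega(P_\sigma)$ uses the same notion of length $\ell(\sigma)$ as the sign in Theorem \ref{thm:P-in-htensor}. Both length conventions are the block-count, so the $\ell(\sigma)$ contributions cancel and no residual dependence on $\sigma$ remains in the sign.
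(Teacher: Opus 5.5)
Your proposal is correct and is essentially the paper's proof. Both apply $\Omega$ to the $h^\otimes$-expansion of Theorem~\ref{thm:P-in-htensor}, using $\Omega(P_\sigma)=(-1)^{\ell(\sigma)}P_\sigma$ and Corollary~\ref{cor:Omega-of-htensor}, and then cancel the two factors of $(-1)^{\ell(\sigma)}$ to reach $(-1)^{\area(\tau)-\ell(\tau)}\sum_T \wt(T)$. Your sign bookkeeping also avoids the paper's index slip of writing $c_{\si,\tau}$ where $c_{\tau,\si}$ is meant.
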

\begin{proof}
From Corollary 16 in \cite{khanna-bij}, $\Omega(P_\si) = (-1)^{\ell(\si)} P_\si$. If $c_{\tau,\si}$ is the coefficient of $h^\otimes_\tau$ in the $h^\ox$-expansion of $P_\si$ as found in Theorem \ref{thm:P-in-htensor}, then $P_\si = \sum_{\tau} c_{\tau,\si} h^\ox_\tau$. Applying $\Omega$ on this expansion and using Corollary \ref{cor:Omega-of-htensor}, we obtain
\[
(-1)^{\ell(\si)} P_\si = \sum_{\tau} (-1)^{\area(\tau)}c_{\si,\tau} e^\ox_\tau.
\]
As $c_{\si,\tau}$ is $(-1)^{\ell(\sigma) - \ell(\tau)} \sum_T \wt(T)$ where the sum is over all constant wrapping block tabloids $T$ of shape $\sigma$ and content $\tau$, the coefficient of $e^\ox_\tau$ is $(-1)^{\ell(\si)} (-1)^{\ell(\sigma) - \ell(\tau)} (-1)^{\area(\tau)}\sum_T \wt(T)$ which simplifies to the claim of the theorem.
\end{proof}
\section{$h$-expansion of $h_n[p_r]$}
Let $n$ and $r$ be positive integers.
The $h^\ox$-expansions and the $e^\ox$-expansions presented in Section \ref{sec:he-tensor-expa} are derived via the combinatorics of the $h$-expansion of the plethysm $h_n[p_r]$. We first present the proof of the Schur expansion of $h_n[p_r]$ following Ex. 7 from Section I.8 in Macdonald's book \cite{macd}. We then give an interpretation of this expansion in terms of objects called abaci. Then by using the recursive combinatorial interpretation of the Jacobi-Trudi identity as described by E\u{g}ecio\u{g}lu and Remmel \cite{inv-kostka}, we find the $h$-expansion of $h_n[p_r]$. First we discuss the prerequisite combinatorial notions.
\subsection{Ribbons and cores}
%For a partition $\la$, define the \textit{border of $\la$} to be the set of all cells $c$ in $\dg(\la)$ for which at least one of the following holds:
%\begin{itemize}
%\item The cell immediately  to the east of $c$ is not in $\dg(\la)$.
%\item The cell immediately to the south of $c$ is not in $\dg(\la)$.
%\item The cell immediately to the southeast of $c$ is not in $\dg(\la)$.
%\end{itemize}
%We use the notation $\border(\la)$ for the border of $\la$.
%\begin{example}
%For the partition $\la = (4,4,3,3,1)$, we draw $\dg(\la) = \yg{3,2,2}{3+1,2+2,2+1,3,1}$ and shade $\border(\la)$ in gray.
%
%\end{example}
%A subset $R$ of $\border(\la)$ is called \textit{connected} if for each pair of cells $c$ and $c'$ in $R$, we can traverse from $c$ to $c'$ using a sequence of north/south and east/west steps.
 
A \textit{$k$-ribbon}, also called a \textit{$k$-rim hook }or a \textit{$k$-border strip}, is a subset of a partition diagram containing $k$ cells which can be traversed using a sequence of only unit north and east moves.  The\textit{ height} $\height(R)$ of a $k$-ribbon $R$ is one less than the number of rows occupied by $R$. Define the \textit{sign of the ribbon $R$} as $\sgn(R) = (-1)^{\height(R)}$. A \textit{special $k$-ribbon} is a $k$-ribbon whose southwesternmost cell is in the first column.
\begin{example}\label{ex:ribbons}
Let $\la = (4,4,3,3,1)$. We shade the 5-ribbon $R$ in gray in the diagram $\yg{4,4,3,3,1}{0,2+2,2+1,1+2}$. We also shade the special 7-ribbon $S$ in gray in the diagram $\yg{4,4,3,3,1}{0,2+2,2+1,3,1}$. As $R$ occupies 3 rows, $\height(R) = 2$ and $\sgn(R) = 1$. As $S$ occupies 4 rows, $\height(S) = 3$ and $\sgn(S) = -1$.  
\end{example}
A $k$-ribbon $R$ contained in $\dg(\la)$ is called \textit{removable} if removing all cells of $R$ from $\dg(\la)$ results in a partition shape.
We say we \textit{remove} a $k$-ribbon $R$ from $\la$ by removing all cells of $R$ contained in $\dg(\la)$. If we denote the partition thus obtained by $\mu$, then we write $R=\la/\mu$. Equivalently, we say that $\la$ is obtained by \textit{adding} the $k$-ribbon $R$ to $\mu$.
\begin{example}
Continuing the notation of Example \ref{ex:ribbons}, we obtain the diagram $\y{4,2,2,1,1}$ after removing $R$ from $\la$ and obtain the diagram $\y{4,2,2}$ after removing $S$ from $\la$. Write $\mu = (4,2,2,1,1)$ and $\nu = (4,2,2)$, then $R = \la/\mu$ and $S = \la/\nu$.
\end{example}

If we successively remove $k$-ribbons from $\la$, then we obtain a partition shape from which no further $k$-ribbons can be removed. The partition obtained in this manner is called the \textit{$k$-core of $\la$}. The $k$-core of $\la$ is unique and independent of the choices of $k$-ribbons removed~\cite[Thm. 2.7.16]{james-kerber}.  In general, we call a partition a \textit{$k$-core} if no $k$-ribbons can be removed from it.

\newcommand{\mcH}{\mathcal{H}}
\subsection{Abacus with runners}
The exposition of this section is based on the book by James and Kerber~\cite{james-kerber} and the monograph by Olsson~\cite{olsson}. 
The operations of addition and removal of ribbons are best understood through the visual aid of an abacus with runners. For a partition $\la$ and a positive integer $r > \ell(\la)$, define the \textit{$r$th} \textit{hook-set} as
\[
\mcH_r(\la) = \{\la_1 + r - 1, \la_2 + r - 2, \ldots, \lambda_{\ell(\la)} + r - \ell(\la), r-1, \ldots, 1, 0\}.
\]
In words, $\mcH_r(\la)$ is obtained by adding $r-1$ to the largest entry, $r-2$ to the second largest entry, and so on adding $r-i$ to the $i$th largest entry, and then to the set thus obtained, appending the entries $r-1, r-2, \ldots, 0$.
Define $\mcH_{\ell(\la)}(\la) =\{\la_1 + \ell(\la) - 1, \la_2 + \ell(\la) - 2, \ldots, \la_{\ell(\la)}\}$. For $r < \ell(\la)$, $\mcH_r(\la)$ is undefined.
Notice that $\mcH_{r+1}(\la)$ can be obtained by increasing each entry in $\mcH_r(\la)$ by 1 and appending a zero. In notation we write for all $r \geq \ell(\la)$, 
\[
\mcH_{r+1}(\la) = \{x + 1: x\in \mcH_r(\la)\} \cup \{0\}.
\]

\begin{prop}[2.7.13 in \cite{james-kerber}]\label{prop:hook-removal}
Let $\la$ be a partition and $r$ be a positive integer satisfying $r\geq \ell(\la)$. Let $R$ be a $k$-ribbon whose top row lies in the $i$th row from top in $\la$ and its bottom row lies in the $j$th row from top in $\la$. If $\mu$ is the partition obtained by removing $R$ from $\la$, then there exists a value $h\in \mcH_r(\la)$ such that $h-k$ is not in $\mcH_r(\la)$, and $\mcH_r(\mu)$ is obtained from $\mcH_r(\la)$ by replacing $h$ by $h-k$. Furthermore, $h$ is the $i$th largest entry of $\mcH_r(\la)$ and $h-k$ is the $j$th largest entry of $\mcH_r(\mu)$.
\end{prop}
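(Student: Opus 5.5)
We track the first-column hook lengths of the diagram: under the hypothesis $r\ge \ell(\la)$, the set $\mcH_r(\la)$ is exactly the set of first-column hook lengths of $\la$ padded with $r-\ell(\la)$ trailing zeros. Equivalently, it is $\{\la_t + r - t : 1\le t\le r\}$, where we set $\la_t=0$ for $t>\ell(\la)$. We then describe precisely which rows change when the ribbon is removed.

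The first step is to fix the shape of $\mu$. A $k$-ribbon $R$ occupying rows $i,\dots,j$ is a connected skew shape with no $2\times 2$ square, and $\mu=\la\setminus R$ is a partition. In each row $t$ with $i\le t\le j$, the cells of $R$ form a terminal segment of row $t$. Connectivity together with the absence of $2\times2$ squares forces two facts. First, $\mu_t = \la_{t+1}-1$ for $i\le t<j$, since the segment in row $t$ begins one column left of the end of row $t+1$, directly above the last cell of row $t+1$ in $R$. Second, $\mu_j = \la_j - (\text{number of cells of } R \text{ in row } j)$. Rows outside $[i,j]$ are unchanged. This step is the main point needing care, and I would justify it using the standard fact that consecutive rows of a ribbon overlap in exactly one column.

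The second step is the computation. For $i\le t<j$ we get
\[
\mu_t + r - t = \la_{t+1} - 1 + r - t = \la_{t+1} + r - (t+1).
\]
So the entries of $\mcH_r(\mu)$ indexed by $i,\dots,j-1$ are exactly the entries of $\mcH_r(\la)$ indexed by $i+1,\dots,j$. Summing row lengths over rows $i$ through $j$ shows that the total decrease $k$ satisfies $\mu_j + r - j = \la_i + r - i - k$. Indeed,
\[
\sum_{t=i}^{j}\la_t - \sum_{t=i}^{j}\mu_t = k,
\]
and substituting $\mu_t=\la_{t+1}-1$ for $t<j$ gives $\la_i-\mu_j-(j-i)=k$. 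Set $h=\la_i+r-i$. Then $\mcH_r(\mu)$ is $\mcH_r(\la)$ with $h$ replaced by $h-k$. The hypothesis $r\ge\ell(\la)$ guarantees that the rows involved are among the $r$ indexed entries, so nothing is lost when padding.

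Finally we check the remaining assertions. Since $\mu$ is a partition, the entries $\mu_t+r-t$ are distinct, so $h-k\notin\mcH_r(\la)\setminus\{h\}$. Also $h-k\neq h$, because $k\ge1$. The entries $\la_t+r-t$ are strictly decreasing in $t$, so $h$, being the entry for $t=i$, is the $i$th largest element of $\mcH_r(\la)$. Similarly $h-k=\mu_j+r-j$ is the $j$th largest element of $\mcH_r(\mu)$.
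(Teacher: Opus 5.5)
Your proof is correct. The paper gives no argument of its own here; it quotes the statement from James and Kerber. Your row-by-row computation therefore serves as a self-contained proof of the cited fact.

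The key step is that the ribbon forces $\mu_t=\lambda_{t+1}-1$ for $i\le t<j$. Consequently the entries $\lambda_t+r-t$ for $t=i+1,\dots,j$ each move up one slot, and $\lambda_i+r-i$ is replaced by $\mu_j+r-j$.

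Your reading $\mathcal{H}_r(\lambda)=\{\lambda_t+r-t : 1\le t\le r\}$ is the right one. It matches the paper's example $\mathcal{H}_8((7,7,5,4,3))=\{14,13,10,8,6,2,1,0\}$. The paper's prose definition, which appends all of $r-1,\dots,0$, would make the claim $h-k\notin\mathcal{H}_r(\lambda)$ fail (e.g.\ $\lambda=(1)$, $r=2$).

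Two small slips should be fixed in the write-up.

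\emph{Sign error.} Substituting $\mu_t=\lambda_{t+1}-1$ into $\sum_{t=i}^{j}(\lambda_t-\mu_t)=k$ gives $\lambda_i-\mu_j+(j-i)=k$, not $\lambda_i-\mu_j-(j-i)=k$. Only the corrected identity yields your (correct) conclusion $\mu_j+r-j=\lambda_i+r-i-k$. As a check, take $\lambda=(4,4,3,3,1)$, $\mu=(4,2,2,1,1)$, $i=2$, $j=4$, $k=5$: then $4-1+2=5$, while your version gives $1$.

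\emph{Wording.} The segment of $R$ in row $t$ begins in column $\lambda_{t+1}$, directly above the last cell of row $t+1$, not one column to its left. It is $\mu_t$ that lies one column left of $\lambda_{t+1}$. Your formula is right; only the sentence describing it is off.
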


\begin{remark}\label{rem:special-hook-removal}
If $R$ is a special $k$-ribbon, then the bottom row of $R$ lies in the bottom row of $\la$, and so in the notation of Proposition \ref{prop:hook-removal}, $j = \ell(\la)$. This means we obtain $\mcH_r(\mu)$ from $\mcH_r(\la)$ by replacing $h$ by the smallest value not contained in $\mcH_r(\la)$.
\end{remark}

As Proposition \ref{prop:hook-removal} tells us that some value in a hook-set changes by $k$ for every removal of a $k$-ribbon, it is useful for us to view the changes in the elements using equivalence classes mod $k$.

An \textit{abacus with $k$ runners} is a semi-infinite array of all non-negative integers with $k$ columns where we place consecutive integers going left-to-right in each row, and the leftmost column contains multiples of $k$ increasing from top to bottom. We label the columns left to right as $0,1, \ldots, k-1$ and the rows going down as $0,1,2,\ldots$. Each entry in the array is called a \textit{position}. For instance, when $k = 5$, the position 13 lies in row 2 and column 3 (keeping in mind the zero-indexing). In general, the label of a position is obtained by adding the column index to the product of the row index and $k$.

For a partition $\la$ and a positive integer $r\geq \ell(\la)$, define $\ab^r_k(\la)$ as the abacus with $k$ runners where we circle the entries of $\mcH_r(\la)$. The circles are called \textit{beads} and all other positions without circles are called \textit{gaps}. So, in $\ab_k^r(\la)$, the superscript $r$ tells us how many beads we have, the subscript $k$ tells us the number of columns, and $\la$ determines the locations of the beads. We \textit{slide} a bead by removing a bead from some position and adding a bead on another position as \textit{sliding} that bead.
\begin{example}
For $\la = (7,7,5,4,3)$, we find $\mcH_{8}(\la) = \{14,13, 10, 8, 6, 2, 1, 0\}$. We draw $\ab_5^8(\la)$ as
\begin{center}\begin{tikzpicture}[shift up/.style = {transform canvas={yshift=.5mm}},shift down/.style = {transform canvas={yshift=-.5mm}},]\matrix (m) [matrix of nodes, column sep = .75cm, row sep = 0.2cm, nodes = {anchor = center}]
{0 & 1 & 2 & 3 & 4\\ 5 & 6 & 7 & 8 & 9\\ 10 & 11 & 12 & 13 & 14\\ };
\draw (m-1-1) circle (0.3cm);
\draw (m-1-2) circle (0.3cm);
\draw (m-1-3) circle (0.3cm);
\draw (m-3-5) circle (0.3cm);
\draw (m-2-2) circle (0.3cm);
\draw (m-2-4) circle (0.3cm);
\draw (m-3-1) circle (0.3cm);
\draw (m-3-4) circle (0.3cm);
\end{tikzpicture}\end{center}
\end{example}
While drawing an abacus, we omit all rows which do not contain beads below the lowermost row containing a bead.

Proposition \ref{prop:hook-removal} tells us that removing a $k$-ribbon corresponds to reducing a certain value in $\mcH_r(\la)$ by $k$, and so a removal of a $k$-ribbon from $\la$ corresponds to sliding a bead in some runner of $\ab_k^r(\la)$ up by one row. Similarly, the addition of a $k$-ribbon corresponds to sliding a bead down by one row. Suppose $b\in \mcH_r(\la)$, then sliding a bead from $b$ to $b'$ where $b - b' = k$ corresponds to removing some $k$-ribbon from $\la$. We compute $\height(R)$ as the number of beads that are in the set of positions $\{b'+1, b'+2 , \ldots, b-1\}$. The \textit{sign of this sliding operation} from $b$ to $b'$ is given by $(-1)^{\height(R)}$. We say that the bead we slid \textit{jumps over} positions $b'+1, b'+2 , \ldots, b-1$.

If $\mu$ is a $k$-core, then we cannot slide any beads upwards. This means that the beads in each runner in  $\ab_k^r(\mu)$ lie in consecutive rows starting from the top. 
\begin{example}
The partition $\la = (6,2,1,1)$ is a 5-core, and $\abc_5^7(\la)$ is
\begin{center}\begin{tikzpicture}[shift up/.style = {transform canvas={yshift=.5mm}},shift down/.style = {transform canvas={yshift=-.5mm}},]\matrix (m) [matrix of nodes, column sep = .75cm, row sep = 0.2cm, nodes = {anchor = center}]
{0 & 1 & 2 & 3 & 4\\ 5 & 6 & 7 & 8 & 9\\ 10 & 11 & 12 & 13 & 14\\ };
\draw (m-1-1) circle (0.3cm);
\draw (m-1-2) circle (0.3cm);
\draw (m-1-3) circle (0.3cm);
\draw (m-1-5) circle (0.3cm);
\draw (m-2-1) circle (0.3cm);
\draw (m-2-3) circle (0.3cm);
\draw (m-3-3) circle (0.3cm);
\end{tikzpicture}\end{center}
\end{example}
\subsection{$s$-expansion of $h_n[p_r]$}
In this section, we focus on partitions which have length at most $r$ and have empty $r$-cores. These partitions are called \textit{$r$-decomposable} in \cite{wildon}. The abacus of any partition contains at least as many beads as the length of the partition. So a partition with length at most $r$ can always be expressed using an abacus with $r$ beads. For the empty partition $\vn$, $\ab_k^r(\vn)$ contains beads in positions $r,r-1,\ldots, 1,0$. When $k= r$, all positions in row 0 are occupied by these $r$ beads, and thus each runner contains exactly one bead in row 0. An $r$-core is obtained via sliding beads upwards in runners, thus a partition with an empty $r$-core which has length at most $r$ contains exactly one bead per runner in some row. When $\la$ is an $r$-decomposable partition, we define $\ab_r(\la) = \ab_r^r(\la)$ to be the abacus with $r$ runners and $r$ beads that contains exactly one bead per runner.
\begin{example}\label{ex:r-decomp-abacus}
The partition $\la = (14,8,7,1)$ of $30$ is 5-decomposable. We can see this by successively removing 5-hooks from the diagram of $\la$ to verify that the 5-core of $\la$ is empty as shown in the following sequence of diagrams.
\boks{0.18}
\[
\yg{14,8,7,1}{9+5} \to \yg{9,8,7,1}{7+2,6+2,6+1} \to \yg{7,6,6,1}{5+2,5+1,4+2}\to
\yg{5,5,4,1}{0,0,4,1} \to \yg{5,5}{4+1,1+4} \to \yg{}{4,1}\to \vn.
\]\boks{0.2}
Alternatively, $\abc_5(\la)$ has one bead in each runner and thus $\la$ is 5-decomposable.
\begin{center}\begin{tikzpicture}[shift up/.style = {transform canvas={yshift=.5mm}},shift down/.style = {transform canvas={yshift=-.5mm}},]\matrix (m) [matrix of nodes, column sep = .75cm, row sep = 0.2cm, nodes = {anchor = center}]
{0 & 1 & 2 & 3 & 4\\ 5 & 6 & 7 & 8 & 9\\ 10 & 11 & 12 & 13 & 14\\ 15 & 16 & 17 & 18 & 19\\ };
\draw (m-1-1) circle (0.3cm);
\draw (m-3-2) circle (0.3cm);
\draw (m-1-3) circle (0.3cm);
\draw (m-4-4) circle (0.3cm);
\draw (m-2-5) circle (0.3cm);
\end{tikzpicture}\end{center}

\end{example}
We now elaborate on the perspective that views $r$-decomposable partitions as a rim-hook tableau with rectangular content. Let $\mu$ be a partition of $n$ and $\be$ be a composition of $n$.
Define a \textit{rim-hook tableau} (RHT) $T$ of shape $\mu$ and content $\beta$ to be a filling of cells of $\dg(\mu)$ with values $1,2,\ldots,\ell(\be)$ such that cells containing $i$ form a $\be_i$-ribbon, and the set of the cells containing $1,2,\ldots, i$ forms the partition shape $\dg(\mu^{(i)})$ for all $i = 1,2,\ldots, \ell(\be)$. Let $\mu^{(0)} = \vn$ and $\mu^{(\ell(\be))} = \mu$. Define the \textit{sign} $\sgn(T)$ of $T$ as $\prod\limits_{i=1}^{\ell(\be)}  (-1)^{\height(\mu^{(i)}/\mu^{(i-1)})}$. A rim-hook tableau of shape $\mu$ can also be viewed as an ordered removal of ribbons from $\dg(\mu)$ where we remove the largest labeled ribbon removed at each step.
\begin{example}
The rim-hook tableau
\boks{0.3}
\ytableausetup{nobaseline}
\[
T= \yt{1112,1222,134,334}
\]
has shape $(4,4,3,3)$ and content $(5,4,3,2)$. We find $\sgn(T) = (-1)^{2 + 1 + 1 + 1}$ where the height of the ribbon formed by cells containing the value 1 is $2$ while the ribbons formed by the cells containing 2, 3 and 4 respectively have height 1. The rim-hook tableau $T$ records the following step-by-step removal of ribbons from $(4,4,3,3)$
\boks{0.2}
\[
\y{4,4,2,2,}*[*(lightgray)]{4+0,4+0,2+1,2+1,}\: \to \:
\y{4,4,1,0,}*[*(lightgray)]{4+0,4+0,1+1,0+2,}\: \to \:
\y{3,1,1,0,}*[*(lightgray)]{3+1,1+3,1+0,0+0,}\: \to \:
\y{0,0,0,0,}*[*(lightgray)]{0+3,0+1,0+1,0+0,}\: \to \vn
\]
\end{example}
Recall that the sign of a ribbon $R$ removed from a partition $\mu$ can be computed from the abacus of $\mu$ by tracking how many beads are jumped over while moving the corresponding bead. In particular, if we jump over $k$ beads while removing $R$, $\sgn(R)= (-1)^k$. 

An \textit{$r$-permutation} $\pi$ is a list of $r$ integers where each value $0,1,\ldots, r-1$ appears exactly once. The set of these objects is denoted by $\SS_r$. Suppose beads occur in positions $b_0, b_1, \ldots, b_{r-1}$ in $\abc_r(\mu)$. We label each bead in position $b_i$ with the label $i$. Consider the RHT of shape $\mu$ and content $(\be_1, \be_2, \ldots, \be_{l})$ for some $l\geq 1$. Suppose we remove ribbons of sizes $\be_l, \be_{l-1}, \ldots, \be_1$ in order, and this results in the bead in position $b_j$ eventually moving to position $i$ for all $j = 0,1,\ldots, r-1$. When the bead in position $b_j$ ends in position $i$, it retains the label $j$. Thus, the labels of beads in positions $i = 0,1,\ldots, r-1$ now form an $r$-permutation which we denote by $\pi^T$. Note that this correspondence from the RHT to its $r$-permutation is not bijective.
\begin{example}\label{ex:pi-T}
In the following example, $\la = (14,14,6,5,5,3,2)$ and $\ab_7(\la)$ is presented below with beads labeled in ascending order of their positions.
\begin{center}\begin{tikzpicture}[shift up/.style = {transform canvas={yshift=.5mm}},shift down/.style = {transform canvas={yshift=-.5mm}},]\matrix (m) [matrix of nodes, column sep = .75cm, row sep = 0.2cm, nodes = {anchor = center}]
{0 & 1 & $2^0$ & 3 & $4^1$ & 5 & 6\\ $7^2$ & $8^3$ & 9 & $10^4$ & 11 & 12 & 13\\ 14 & 15 & 16 & 17 & 18 & $19^5$ & $20^6$\\};
\draw (m-1-3) circle (0.3cm);
\draw (m-1-5) circle (0.3cm);
\draw (m-2-1) circle (0.3cm);
\draw (m-2-2) circle (0.3cm);
\draw (m-2-4) circle (0.3cm);
\draw (m-3-6) circle (0.3cm);
\draw (m-3-7) circle (0.3cm);
\end{tikzpicture}\end{center}
Let $T$ be a RHT on $\la$ with content $(13,3,16,1,5,9,2)$.

\ytableausetup{nobaseline}
\boks{0.35}\[
T = \yt{11111111111113,22233333333333,333366,45556,55666,666,77}
\]
We slide the beads to the following positions:
\begin{center}\begin{tikzpicture}[shift up/.style = {transform canvas={yshift=.5mm}},shift down/.style = {transform canvas={yshift=-.5mm}},]\matrix (m) [matrix of nodes, column sep = .75cm, row sep = 0.2cm, nodes = {anchor = center}]
{$0^0$ & $1^4$ & $2^2$ & $3^1$ & $4^6$ & $5^3$ & $6^5$\\ 7 & 8 & 9 & 10 & 11 & 12 & 13\\ 14 & 15 & 16 & 17 & 18 & 19 & 20\\ };
\draw (m-1-1) circle (0.3cm);
\draw (m-1-2) circle (0.3cm);
\draw (m-1-3) circle (0.3cm);
\draw (m-1-4) circle (0.3cm);
\draw (m-1-5) circle (0.3cm);
\draw (m-1-6) circle (0.3cm);
\draw (m-1-7) circle (0.3cm);
\end{tikzpicture}\end{center}
We read off $\pi^T = (0,4,2,1,6,3,5)$.
\end{example}

An \textit{inversion} in a $r$-permutation $\pi =(\pi_0, \pi_1,\ldots, \pi_{r-1})$ is a pair $(i,j)$ such that $i<j$ and $\pi_i > \pi_j$. We denote the number of inversions in $\pi$ by $\inv(\pi)$. Define $\sgn(\pi) = (-1)^{\inv(\pi)}$. 
\begin{example}
Continuing from Example \ref{ex:pi-T}, we find that $\pi^T$ has inversions $(1,2), (1,3), (1,5), (2,3), (4,5), (4,6)$. Note that the coordinates of inversions are the zero-indexed positions in $\pi^T$. So, $\inv(\pi^T) = 5$, which means $\sgn(\pi^T) = -1$.
\end{example}

 For two $r$-permutations $\pi$ and $\rho$, define $\pi\cdot \rho$ to be the permutation whose $i$th entry is $\pi_{\rho_i}$ for $i = 0,1,\ldots, r-1$. 

\begin{lemma}[Thm. 7.31(a) in \cite{loehr-book}]\label{lem:sign-of-product}
Let $r$ be a positive integer. For two $r$-permutations $\pi$ and $\rho$, $\sgn(\pi\cdot \rho) = \sgn(\pi)\sgn(\rho)$.
\end{lemma}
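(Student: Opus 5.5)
The plan is to prove multiplicativity of the sign directly from the definition of $\sgn$ via inversions, but with a cleaner reformulation. First we will show that for any $r$-permutation $\pi$, $\sgn(\pi)=\prod_{0\le i<j\le r-1}\frac{\pi_j-\pi_i}{j-i}$. Each unordered pair $\{i,j\}$ with $i<j$ contributes a factor $\frac{\pi_j-\pi_i}{j-i}$, which is negative exactly when $(i,j)$ is an inversion. Since $\pi$ is a bijection of $\{0,1,\ldots,r-1\}$, the multiset of absolute values $|\pi_j-\pi_i|$ over pairs $\{i,j\}$ coincides with the multiset of $|j-i|$. Hence the absolute value of the product is $1$ and its sign is $(-1)^{\inv(\pi)}$.

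Next we will apply this formula to $\pi\cdot\rho$, whose $i$th entry is $\pi_{\rho_i}$. We write
\[
\prod_{i<j}\frac{\pi_{\rho_j}-\pi_{\rho_i}}{j-i}=\prod_{i<j}\frac{\pi_{\rho_j}-\pi_{\rho_i}}{\rho_j-\rho_i}\cdot\prod_{i<j}\frac{\rho_j-\rho_i}{j-i}.
\]
This is legitimate because $\rho_j\neq\rho_i$. The second product is $\sgn(\rho)$ by the first step.

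For the first product, note that the factor $\frac{\pi_{b}-\pi_{a}}{b-a}$ is symmetric in the pair $\{a,b\}$. Therefore the factor indexed by $\{i,j\}$ depends only on the unordered pair $\{\rho_i,\rho_j\}$. As $\{i,j\}$ ranges over all unordered pairs, so does $\{\rho_i,\rho_j\}$, because $\rho$ is a bijection. So the first product is $\prod_{a<b}\frac{\pi_b-\pi_a}{b-a}=\sgn(\pi)$.

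The only step requiring care is the reindexing in this last paragraph. Specifically, we must check that the symmetry of the factor under swapping its two indices is what allows us to pass from ordered pairs $i<j$ to unordered pairs $\{\rho_i,\rho_j\}$ without introducing extra signs. Everything else is routine. Alternatively, one could cite the statement directly as in \cite{loehr-book}, but the above self-contained argument suffices.
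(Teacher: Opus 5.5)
Your proof is correct, and every step checks out:
\begin{itemize}
\item the factor $\frac{\pi_j-\pi_i}{j-i}$ is negative exactly on the paper's inversions ($i<j$, $\pi_i>\pi_j$);
\item the map $\{i,j\}\mapsto\{\pi_i,\pi_j\}$ is a bijection on $2$-subsets of $\{0,1,\ldots,r-1\}$, which forces the absolute value to be $1$;
\item in the second step, the symmetry of $\frac{\pi_b-\pi_a}{b-a}$ in $a,b$ is what lets you reindex by $\{\rho_i,\rho_j\}$ without picking up extra signs.
\end{itemize}
You also used the paper's convention $(\pi\cdot\rho)_i=\pi_{\rho_i}$ correctly, although the convention does not matter here. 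You left implicit that $\pi\cdot\rho$ is again an $r$-permutation, which is needed to apply your first step to it, but this is immediate.

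The paper gives no argument of its own; it simply imports the result from Loehr's textbook. Your proposal is therefore a self-contained replacement rather than a reconstruction. The other standard route shows that composing with an adjacent transposition changes $\inv$ by exactly one, then factors $\rho$ into adjacent transpositions and inducts. Your route via the quotient of products of pairwise differences needs no factorization and no induction, and it gives multiplicativity at once after the product formula. The transposition route, in exchange, tracks how $\inv$ itself changes, not just its parity.

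Your device is also essentially the one the paper uses in the proof of Proposition~\ref{prop:hnpr-in-s}. There, $s_\la(1,\omega,\ldots,\omega^{r-1})$ is evaluated as a ratio of products of pairwise differences whose numerator pairs are a relabeling of the denominator pairs, so the value is $\pm 1$ according to the parity of inversions. Your argument therefore fits naturally with the rest of the paper.
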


%Sep 9 2026
\begin{prop}\label{prop:sign-of-abacus-is-rht}
For a rim-hook tableau $T$ with at most $r$ rows, $\sgn(T) = \sgn(\pi^T)$.
\end{prop}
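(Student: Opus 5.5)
We argue by induction on the number $l = \ell(\be)$ of ribbons in $T$, tracking the abacus $\ab_r(\mu)$ with $r$ labeled beads. The sign of $T$ is a product of ribbon signs, and each ribbon removal is a single bead slide. So the goal is to show that each slide contributes to the permutation of bead labels exactly the sign $(-1)^{\height}$ of the removed ribbon. Throughout, we view the configuration of the $r$ labeled beads as a word: read the labels of the beads in increasing order of position. Call this word $w(\mu)$ at the current stage; it is an $r$-permutation.

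At the start, beads are labeled in ascending order of position, so the initial word is the identity permutation, which has sign $1$. At the end, the shape is $\vn$ (a rim-hook tableau exhausts $\mu$), so the beads occupy exactly positions $0,1,\ldots,r-1$. The word read in position order is then precisely $\pi^T$. Hence it suffices to prove the following claim.

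\emph{Claim.} When one ribbon $R$ is removed, the sign of the word changes by the factor $(-1)^{\height(R)}$.

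By Proposition \ref{prop:hook-removal}, removing a ribbon $R$ slides one bead from position $b$ to position $b'=b-|R|$, where $b'$ is a gap. The remaining beads keep both their positions and their labels. In the position-ordered word, this moves one label from its slot to an earlier slot, passing over exactly the beads lying in positions $b'+1,\ldots,b-1$. By the discussion after Proposition \ref{prop:hook-removal}, the number of such beads is $\height(R)$.

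Moving one entry of a word leftward past $m$ entries equals composing with a cycle of length $m+1$, whose sign is $(-1)^m$. Formally, the new word is $w\cdot\rho$ for a cyclic permutation $\rho$ with $\inv(\rho)=m$. Lemma \ref{lem:sign-of-product} then gives $\sgn(w\cdot\rho)=\sgn(w)(-1)^{\height(R)}$, which proves the claim. Multiplying the factors over all $l$ removals yields $\sgn(\pi^T)=\prod_i(-1)^{\height(\mu^{(i)}/\mu^{(i-1)})}=\sgn(T)$.

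The main obstacle is bookkeeping rather than mathematics. First, the hypothesis of at most $r$ rows ensures $r\ge\ell(\mu^{(i)})$ at every stage, so all intermediate abaci $\ab^r_r(\mu^{(i)})$ are defined with the same $r$ beads and Proposition \ref{prop:hook-removal} applies at each step. Second, the convention $\pi\cdot\rho$ from the paper must be matched to the slot-moving operation, so that the cycle multiplies on the correct side. Since only signs matter, either side works by Lemma \ref{lem:sign-of-product}.
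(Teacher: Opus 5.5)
Your proof is correct and is essentially the paper's argument. Both proceed ribbon by ribbon and use Proposition~\ref{prop:hook-removal} to see that each removal moves one bead past exactly $\height(R)$ beads, so the label permutation changes by a cycle with $\height(R)$ inversions; Lemma~\ref{lem:sign-of-product} then finishes, and the only cosmetic difference is that you keep the original labels and multiply by the cycle on the right, whereas the paper relabels after each removal and writes $\pi^T$ as that cycle times $\pi^{T'}$.
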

\begin{proof}
We prove this statement via induction. Suppose the abacus of shape of $T$ has beads in positions $b_0, b_1,\ldots, b_{r-1}$. Suppose $T'$ is the RHT obtained after removing the largest labeled $t$-ribbon and the new positions of the beads are $b_0, b_1, \ldots, b_{i-k-1},b_i - t, b_{i-k}, b_{i-k+1},\ldots. b_{i-1}, b_{i+1},\ldots, b_{r-1}$ where we read off the permutation of this new abacus as $(0,1,\ldots,i-k-1, i, i-k, i-k+1, i-k+2, \ldots, i-1, i+1, \ldots r-1)$. This permutation has exactly $k$ inversions as $i$ appears to the left of $i-k, i-k+1, \ldots, i-1$. If we now proceed to remove ribbons from $T'$ to obtain the permutation $\pi^{T'}$, then $\pi^T = (0,1,\ldots, i, i-k, i-k+1, i-k+2, \ldots, i-1, i+1, \ldots r-1)\cdot \pi^{T'}$, which shows $\sgn(\pi^T) = (-1)^k \sgn(\pi^{T'})$. This recursion matches $\sgn(T) = (-1)^k \sgn(T')$ for the corresponding RHT. The sign $\sgn(\vn)$ of the empty RHT is 1, while the sign of $\pi^\vn = (0,1,\ldots, r-1)$ is also 1. Thus $\sgn(\pi^T) = \sgn(T)$ as $\sgn(\vn) = 1$.
\end{proof}
\begin{remark}
An alternate proof of Proposition \ref{prop:sign-of-abacus-is-rht} can be found in Lem. 4.4 of \cite{rht-konstant}.
\end{remark}
%\begin{example}Suppose we have a RHT such that $\pi^{T} = (0,1,2,3,6,4,5,7)$. Now, assume we remove a ribbon such that the bead in the second largest position slides to the fourth lowest position. This results in the permutation $\pi = (0,1,2,6,3,4,5,7)$. As we only swap the third and fourth position in $\pi$, $\pi^{T'} = (0,1,2,4,3,5,6,7)$. Thus, we find $\pi^T = \pi \cdot \pi^{T'}$.\end{example}
In an $r$-decomposable partition of $nr$, at each step, we remove an $r$-ribbon. The rim-hook tableau recording this removal has content $(r,r,\ldots, r)$ where $r$ occurs $n$ times. A content composition of $n > 1$ is called \textit{rectangular} if the set of values appearing in the composition is a singleton. The sign of the rim-hook tableau depends on the content, or equivalently, the choices of ribbons removed. The following lemma due to Littlewood is useful to us.
\begin{lemma}[Cor. 8 and 9 in \cite{white-bij}, Cor. 3.6 in \cite{tenner-involutions}, \cite{littlewood}]\label{lem:rectangular-content}
If $T$ and $T'$ have the same shape and the same rectangular content, then $\sgn(T) = \sgn(T')$.
\end{lemma}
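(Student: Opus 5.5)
Both tableaux have shape $\mu$ and content $(k,k,\ldots,k)$ with $m$ parts, so $|\mu| = mk$. I will work with the abacus. Fix $r\geq \ell(\mu)$ and consider $\ab_k^r(\mu)$. Proposition \ref{prop:hook-removal} says that removing a $k$-ribbon slides one bead up one row on its runner. The sign of that removal is $(-1)^{j}$, where $j$ is the number of beads jumped over. Since $T$ has rectangular content, reading $T$ backwards gives a sequence of $m$ such single-row slides, ending at the abacus of the $k$-core of $\mu$. The core is unique, so every such sequence ends in the same final bead configuration.

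The key step is to write the total sign as a sign attached to a labelling of beads, in the spirit of Proposition \ref{prop:sign-of-abacus-is-rht}. Label the beads of $\ab_k^r(\mu)$ by $0,1,\ldots,r-1$ in increasing order of position, and let each bead carry its label as it slides. A bead sliding from $b$ to $b-k$ jumps over exactly the beads lying strictly between $b-k$ and $b$. In the ordered list of labels read by position, this move transposes the moving label past precisely those labels, so it multiplies the sign of the label permutation by $(-1)^{j}$. This is the same computation as in the proof of Proposition \ref{prop:sign-of-abacus-is-rht}, combined with Lemma \ref{lem:sign-of-product}. After all $m$ slides, $\sgn(T)$ equals the sign of the permutation recording which label ends in which of the final bead positions, read in increasing order.

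It remains to show that this final permutation does not depend on $T$. A bead never leaves its runner, since every slide is by exactly $k$. Within a runner, a bead sliding up by one row can never pass another bead on that runner: the target position $b-k$ must be a gap, and a one-row move cannot skip over an occupied position in its own column. So the beads on each runner keep their relative vertical order. The final configuration, the core's abacus, has a fixed number of beads on each runner, and the beads of a given runner fill its top rows. Hence the final position of every labelled bead is forced: the $t$th lowest bead of runner $c$ in $\ab_k^r(\mu)$ ends at the $t$th lowest position of runner $c$ in the core. The final permutation is therefore determined by $\mu$ and $k$ alone, and $\sgn(T)=\sgn(T')$.

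The main obstacle is the bookkeeping in the first step. One must check that the sign change from jumping $j$ beads equals the sign change of the label permutation under the induced reordering, with no extra contribution from other runners. I would handle this exactly as in Proposition \ref{prop:sign-of-abacus-is-rht}: the move is the cyclic shift of $j+1$ consecutive entries, which has $j$ inversions. The order-preservation on a runner also needs care. Here I would argue directly that the positions $b-k$ and $b$ are adjacent on the runner, so no bead of the same runner lies between them.
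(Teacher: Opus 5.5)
Your argument is correct. It also differs in kind from the paper, which gives no proof of this lemma and simply imports it from White, Tenner and Littlewood.

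You derive it instead from the paper's own abacus machinery:
\begin{itemize}
\item Proposition~\ref{prop:hook-removal} turns each $k$-ribbon removal into a one-row slide with sign $(-1)^j$, where $j$ is the number of beads jumped.
\item The labelled-bead bookkeeping of Proposition~\ref{prop:sign-of-abacus-is-rht}, with Lemma~\ref{lem:sign-of-product}, converts the product of ribbon signs into the sign of the final label permutation. The key fact is that a cyclic shift of $j+1$ consecutive entries has sign $(-1)^j$.
\item Your two invariants force the final position of every labelled bead: beads never change runner, and beads on one runner never pass each other. This suffices because the terminal abacus has each runner's beads packed into its top rows. Here that terminal abacus is the one of $\varnothing$, since an RHT tiles all of $\mu$.
\end{itemize}

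Your route buys self-containment and a sharper statement. When $k=r$ and $\mu$ is $r$-decomposable, it shows that the permutation $\pi^T$ itself, not just its sign, is the same for every RHT of content $(r,\ldots,r)$. That permutation equals the $\pi$ of Lemma~\ref{lem:sign-of-tableau}. So that lemma follows from Proposition~\ref{prop:sign-of-abacus-is-rht} directly, rather than by passing through the present one. The same forcing argument also reproves uniqueness of the $k$-core, so you do not need to invoke it.

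The cited sources place the fact inside broader bijective and involution-based treatments of rim-hook tableaux, and in Littlewood's case a character-theoretic one. Those give more general results, but they leave this step of the paper dependent on outside work.
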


Lemma \ref{lem:rectangular-content} implies that the sign of any RHT with content $(r,r,\ldots, r)$ with shape a $r$-decomposable partition $\mu$ depends only on $\mu$ and not the order we remove the $r$-ribbons in. We denote this unique sign by $\sgn_r(\mu)$. When $\mu$ has length strictly larger than $r$ or $\mu$ does not have an empty $r$-core, it is not $r$-decomposable. In this case, define $\sgn_r(\mu) = 0$.

\begin{lemma}\label{lem:sign-of-tableau}
Let $\mu$ be an $r$-decomposable partition and suppose beads in $\ab_r(\mu)$ occur in positions $b_0 < b_1 < \ldots < b_{r-1}$. Further suppose that $\pi$ is the $r$-permutation such that runner $i$ contains the bead $b_{\pi_i}$. Then $\sgn_r(\mu) =  \sgn(\pi)$.
\end{lemma}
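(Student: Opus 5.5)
Since $\sgn_r(\mu)$ is the common sign of all RHTs of shape $\mu$ and content $(r,\ldots,r)$ (Lemma \ref{lem:rectangular-content}), it suffices to exhibit one such RHT $T$ with $\sgn(\pi^T)=\sgn(\pi)$. Then Proposition \ref{prop:sign-of-abacus-is-rht} gives $\sgn_r(\mu)=\sgn(T)=\sgn(\pi^T)=\sgn(\pi)$. The plan is therefore to pick a convenient sequence of $r$-ribbon removals, i.e. upward bead slides in $\ab_r(\mu)$, and to track where each labeled bead ends up.

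The key observation is that every removal of an $r$-ribbon slides a bead up exactly one row \emph{within its own runner}, because positions $b$ and $b-r$ are congruent mod $r$. Hence the bead that starts on runner $i$ stays on runner $i$ throughout. Since $\mu$ is $r$-decomposable, each runner has exactly one bead, so we can slide each bead up to row $0$ one step at a time, in any order. Every such slide is a legal ribbon removal: the target position directly above is a gap, since each runner carries only one bead. The resulting sequence of removals is an RHT $T$ of shape $\mu$ with content $(r,r,\ldots,r)$, and at the end the beads occupy positions $0,1,\ldots,r-1$, with the bead on runner $i$ sitting at position $i$.

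Now read off $\pi^T$. The beads are labeled by the rank of their starting position, so the bead at $b_j$ carries label $j$. The bead finishing at position $i$ is the one that started on runner $i$, namely $b_{\pi_i}$, so it carries label $\pi_i$. Hence $\pi^T=\pi$ exactly, not merely up to sign, and the lemma follows.

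The only point needing care is well-definedness. We must check that the labeling convention in the definition of $\pi^T$ (labels retained while sliding) agrees with the indexing $b_0<\cdots<b_{r-1}$ in the statement. We must also check that Proposition \ref{prop:sign-of-abacus-is-rht} applies, which requires $\ell(\mu)\le r$; this holds by $r$-decomposability. I expect no real obstacle beyond stating cleanly that bead slides by $r$ preserve the runner.
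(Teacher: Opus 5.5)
Your proposal is correct and matches the paper's proof: the paper also slides each bead straight up its own runner to row $0$ (processing runners left to right), observes that the resulting RHT $T$ of content $(r,\ldots,r)$ satisfies $\pi^T=\pi$, and concludes via Proposition~\ref{prop:sign-of-abacus-is-rht} and Lemma~\ref{lem:rectangular-content}. The only difference is that you allow the runners to be processed in any order, which changes nothing.
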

\begin{proof}
We label each bead in position $b_i$ with the label $i$. We scan the runners from left to right, and we slide each bead in each runner one row up at a time until it reaches the top.  This sliding procedure corresponds to some RHT $T$ with rectangular content $(r,r,\ldots, r)$, and at the end of this procedure, the bead in runner $i$ with label $\pi_i$ occurs in position $i$ in $\ab_r(\vn)$. This means that the $r$-permutation $\pi^T$ obtained by reading the labels on the beads in row 0 is $\pi$. By Proposition \ref{prop:sign-of-abacus-is-rht}, $\sgn(\pi) = \sgn(T)$. As any RHT of rectangular content $(r,r,\ldots, r)$ and shape $\mu$ has the same sign, $\sgn_r(\mu) = \sgn(T) = \sgn(\pi)$.
\end{proof}

\noindent We now prove the $s$-expansion of $h_n[p_r]$ as outlined in Macdonald's book \cite{macd}.

\begin{prop}[I.8 Ex. 7 in \cite{macd}]\label{prop:hnpr-in-s}
If $n$ and $r$ are positive integers, then \[h_n[p_r] = \sum_{\mu\in \Par(nr)} \sgn_r(\mu) s_\mu.\]
\end{prop}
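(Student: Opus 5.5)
We will prove the identity in finitely many variables $x_1,\ldots,x_r$ using Jacobi's bialternant formula, and then pass to infinitely many variables. Since every $\mu$ with $\sgn_r(\mu)\neq 0$ has $\ell(\mu)\le r$, and the transition coefficients from $h_n$ to $s_\mu$ are stable under adding variables, it suffices to check the identity after multiplying by the Vandermonde determinant $\det(A_{\vec 0_r})$.

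\textbf{Step 1: the antisymmetric side.} Work in $x_1,\ldots,x_r$. By Proposition \ref{prop:monomial-sub}, $h_n[p_r](x_1,\ldots,x_r)=h_n(x_1^r,\ldots,x_r^r)$. We will compute $h_n(x_1^r,\ldots,x_r^r)\det(A_{\vec 0_r})$ and write it as a combination of alternants $\det(A_\mu)$.

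Write the Vandermonde as $\prod_{i<j}(x_i-x_j)$, and rewrite $h_n(x_1^r,\ldots,x_r^r)$ via the identity
\[
\prod_{i<j}(x_i^r-x_j^r)=\det\bigl[x_i^{r(r-j)}\bigr].
\]
The cleanest route is through generating functions. For each $i$,
\[
\sum_n h_n(x^r)\,t^{rn}=\prod_i (1-x_i^rt^r)^{-1},
\qquad
(1-x^rt^r)^{-1}=\frac{\prod_{\zeta^r=1}(1-\zeta x t)^{-1}}{\cdots}.
\]
This is the approach in Macdonald I.8 Ex.~7. Concretely, $\sum_\mu \sgn_r(\mu)s_\mu$ is identified with $\sum_n h_n[p_r]$ by evaluating the Cauchy-type expansion of $\prod_i (1-x_i^r)^{-1}$ against the alternant.

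A more direct alternative is to expand the product $h_n(x^r)\cdot a_\delta$ with $a_\delta=\det[x_i^{r-j}]$ termwise. The product is $\sum_{\alpha}\sum_{w\in\SS_r}\sgn(w)\,x^{r\alpha+w(\delta)}$, where $\alpha$ ranges over weak compositions of $n$ with $r$ parts. Antisymmetrizing, the coefficient of $a_{\mu+\delta}$ equals a signed count of pairs $(\alpha,w)$ with $r\alpha+w(\delta)$ a permutation of $\mu+\delta$. The set $\mu+\delta$ equals $\mcH_r(\mu)$.

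\textbf{Step 2: abacus interpretation.} $r\alpha+w(\delta)$ rearranges to $\mcH_r(\mu)$ exactly when the bead set $\mcH_r(\mu)$ has one bead on each runner $i$, namely at position $r\alpha_j+i$. This is precisely the condition that $\mu$ is $r$-decomposable, since $\delta=(r-1,\ldots,0)$ gives one residue per runner. In that case the pair $(\alpha,w)$ is unique: $w$ is forced by the residues and $\alpha$ by the rows. So the coefficient is $\pm1$ if $\mu$ is $r$-decomposable and $0$ otherwise, matching $\sgn_r(\mu)=0$ in the latter case.

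\textbf{Step 3: sign matching.} This is the main obstacle. The sign arises as $\sgn(w)$ times the sign of the sort that brings $r\alpha+w(\delta)$ into decreasing order. Together these amount to the permutation recording which runner each ordered bead $b_0<\cdots<b_{r-1}$ lies on, i.e.\ the $\pi$ of Lemma \ref{lem:sign-of-tableau}. We must check carefully that the composite permutation is $\pi$ (or its inverse, which has the same sign) under the paper's conventions for $\inv$ and $\pi\cdot\rho$. Lemma \ref{lem:sign-of-product} handles the composition.

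Lemma \ref{lem:sign-of-tableau} then gives the coefficient $\sgn_r(\mu)$. Dividing by $a_\delta$ and applying the bialternant formula yields the proposition.
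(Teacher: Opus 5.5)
Your alternant computation is sound in $r$ variables, but the reduction to $r$ variables is a genuine gap. Restricting to $x_1,\dots,x_r$ kills exactly the span of the $s_\mu$ with $\ell(\mu)>r$. So Steps 1--3 only prove that the coefficient of $s_\mu$ in $h_n[p_r]$ is $\sgn_r(\mu)$ when $\ell(\mu)\le r$. That the right-hand side involves only such $\mu$ says nothing about the left-hand side. You still have to show that $h_n[p_r]$ has no component $s_\mu$ with $\ell(\mu)>r$, and this is not automatic, because such $\mu$ can have empty $r$-core. For $r=n=2$, both $(2,1,1)$ and $(1,1,1,1)$ do, and the two-variable identity cannot tell $h_2[p_2]$ apart from $h_2[p_2]+c_1s_{211}+c_2s_{1111}$. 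Rerunning Steps 1--2 in $N\ge nr$ variables does not fix this either. Runners then carry several beads, the contributing pairs are no longer unique, and you would need a cancellation argument: for $r=2$, $N=3$, $\mu=(2,1,1)$, two terms contribute to $a_{\mu+\delta}$ with opposite signs.

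The paper avoids the problem by putting the $r$-letter alphabet on the other side of the Cauchy identity. From $\prod_i(1-x_i^rt^r)^{-1}=\prod_i\prod_{j=1}^r(1-x_i\omega^{j-1}t)^{-1}$ it gets $h_n[p_r]=\sum_\lambda s_\lambda(1,\omega,\dots,\omega^{r-1})\,s_\lambda(x)$ with $x$ still infinite. Then $s_\lambda(1,\omega,\dots,\omega^{r-1})=0$ automatically when $\ell(\lambda)>r$, and the bialternant evaluation of the remaining values is the same residue-and-sign computation as your Steps 2--3.

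Alternatively, you can patch your own route directly. Note that $\omega(h_n[p_r])=\pm e_n[p_r]=\pm m_{(r^n)}$. The inner product $\langle m_{(r^n)},s_\nu\rangle$ is the coefficient of $h_{(r^n)}$ in $s_\nu$. This coefficient is $0$ when $\nu_1>r$, because every Jacobi--Trudi term contains a factor $h_{\nu_1-1+\sigma(1)}$ of index $\ge\nu_1>r$. Applying $\omega$, which sends $s_\nu$ to $s_{\nu'}$, gives the missing vanishing for $\ell(\mu)>r$.

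Separately, tidy up Step 1. The coefficient of $a_{\mu+\delta}$ is $\sum\sgn(w)$ over pairs with $r\alpha+w(\delta)$ \emph{equal} to $\mu+\delta$. With your wording ``a permutation of'', every $w$ admits a matching $\alpha$, which contradicts your uniqueness claim. With equality, no extra sorting sign should appear in Step 3.
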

\begin{proof}
Recall that the generating function of the complete homogeneous symmetric functions~\cite[Eq. (2.5) in I.2]{macd} is given by 
\[
\sum_{n\geq 0} h_n(\x_*) z^n = 
\prod\limits_{i\geq 1} \left(1 - x_i z\right)^{-1}.
\]
Also recall that for $f\in \Sym$, $f[p_r]$ is computed by raising every variable in $f$ to the power $r$. Thus, $h_n[p_r](\x_*)$ is the coefficient of $z^{n}$ in the expansion $
\prod_{i\geq 1} \left(1 - x_i^r z\right)^{-1}$. Setting $z = t^r$ results in $h_n[p_r](\x_*)$ being the coefficient of $t^{rn}$ in the product $
\prod_{i\geq 1} \left(1 - x_i^r t^r\right)^{-1}$ which can be factorized using a primitive $r$th root of unity $\omega$ as
\[
\prod\limits_{i\geq 1} \left(1 - x_i^rt^r\right)^{-1} = \prod\limits_{i\geq 1} \prod\limits_{j=1}^r \left(1 - x_i \omega^jt\right)^{-1}.
\]

    The above expression lends itself to the Cauchy identity \cite[Eq. (4.3) in I.4]{macd} for Schur functions which is
    \[\prod\limits_{i,j\geq 1} \left(1 - x_iy_j\right)^{-1} = \sum\limits_{\lambda} s_{\lambda}(x_1, x_2, \ldots)s_{\lambda}(y_1, y_2, \ldots).\]
    
Setting $y_j= \omega^{j-1}t$ for $j = 1,2,\ldots, r$ and $y_j  = 0$ for $j > r$ yields
\[
 \prod\limits_{i\geq 1} \prod\limits_{j=1}^r \left(1 - x_i \omega^jt\right)^{-1} = \sum\limits_{\lambda} s_\lambda(x)s_{\lambda}(1, \omega, \omega^2, \ldots, \omega^{r-1})t^{|\lambda|},
\]
where the last equality follows from the homogeneity property of Schur functions.

By using the bialternant formulation of Schur functions \cite[Eq. (3.1) in I.3]{macd} and the Vandermonde determinant product formula~\cite[Thm. 12.65]{loehr-book}, we get
%\[
%s_{\lambda}(1, \omega, \ldots, \omega^{r-1}) = \frac{1}{\prod\limits_{1\leq j < k \leq r} \omega^{r-j} - \omega^{r-k}} \begin{vmatrix}
%    (\omega^0)^{\lambda_1 + r - 1} & (\omega^1)^{\lambda_1 + r - 1} & \ldots & (\omega^{r-1})^{\lambda_1 + r - 1}\\
%    (\omega^0)^{\lambda_2 + r - 2} & (\omega^1)^{\lambda_2 + r - 2} & \ldots & (\omega^{r-1})^{\lambda_2 + r - 2}\\
%    \vdots & \vdots& \ddots & \vdots\\
%    (\omega^0)^{\lambda_r} & (\omega^1)^{\lambda_r} & \ldots & (\omega^{r-1})^{\lambda_r}
%\end{vmatrix}
%\]
\[
s_{\lambda}(1, \omega, \ldots, \omega^{r-1}) = \prod\limits_{1\leq j < k \leq r} \frac{\omega^{\lambda_j + r - j} - \omega^{\lambda_k + r- k}}{\omega^{r-j} - \omega^{r-k}} \quad (*).
\]
The exponents of $\omega$ in the numerator $\omega^{\lambda_j + r - j} - \omega^{\lambda_k + r- k}$ are exactly the elements of $\mcH_r(\la)$, and thus correspond to the positions of beads in an abacus of $\la$ with $r$ beads. On the other hand, the exponents of $\omega$ in the denominator $\omega^{r-j} - \omega^{r-k}$ correspond to bead positions in an abacus of $\vn$ with $r$ beads. First suppose that $\la$ does not have an empty $r$-core. A partition has an empty $r$-core if and only if its abacus with $r$ runners and $r$ beads has one bead per runner. We deduce that the abacus with $r$ runners of $\la$ has multiple beads in some runner, and so there exists a pair of beads in positions $\lambda_{i_0}+r-i_0$ and $\lambda_{j_0}+r-j_0 $ separated by a multiple of $r$. Thus $\omega^{\lambda_{i_0}+r-i_0} = \omega^{\la_{j_0} + r - j_0}$. This makes the numerator of the product on the right hand side of $(*)$ equal to 0, and thus $s_\la(1, \omega, \ldots, \omega^{r-1})$ is equal to 0. As a Schur function with less than $\ell(\la)$ inputs is 0, $r\geq \ell(\la)$.

Now suppose that $\la$ does have an empty $r$-core. Label each bead in position $\la_i + r - i$ with the label $i$ for $i = 0,1,\ldots, r-1$. Suppose each bead in position $\la_i + r-i$ ends up in position $\pi_i$ where $0\leq \pi_i \leq r-1$ for $i = 0,1,\ldots, r-1$. Each factor in the product on the right hand side of $(*)$ is negative for exactly those pairs of beads for which $\la_i + r- i < \la_j + r- j$ and $\pi_i > \pi_j$. This is equivalent to changing the original permutation where $i$ appeared to the left of $j$ to the permutation where $i$ appears to right of $j$ as $\pi_i > \pi_j$. So each inversion in $\pi$ contributes a $-1$ to the product on the right hand side of $(*)$. From this, it follows that $s_\la(1, \omega, \ldots, \omega^{r-1}) = \sgn((\pi_0,\pi_1,\ldots, \pi_{r-1}))$ and $s_\la(1, \omega, \ldots, \omega^{r-1}) = \sgn_r(\la)$ by Lemma \ref{lem:sign-of-tableau}.
\end{proof}

\subsection{$h$-expansion of $h_n[p_r]$}\label{ssec:h-exp-hnpr}
In this subsection, we find the composition-indexed $h$-expansion of $h_n[p_r]$ by applying the Jacobi-Trudi identity on the $s$-expansion discussed in the previous section. We first recall the combinatorial interpretation of the Jacobi-Trudi identity due to E\u{g}ecio\u{g}lu and Remmel \cite{inv-kostka}. Recall that a special $k$-ribbon is a $k$-ribbon whose southwesternmost cell lies in the first column. A \textit{special rim-hook tableau} (SRHT) of shape $\mu$ and content $\beta$ is a rim-hook tableau of shape $\mu$ and content $\be$ such that the cells containing the same label form special rim-hooks. Equivalently, an SRHT is an RHT where every label appears at least once in the first column.
\begin{example}\label{ex:srht-ex1}
\ytableausetup{nobaseline}
\boks{0.3}
The SRHT \[T= \yt{111222,1222,2233,333,34,44}\] has shape $(6,4,4,3,2,2)$ and content $(4,8,6,3)$. We compute $\sgn(T) = (-1)^{1+2+2+1} = 1$.
\end{example}
Given a partition $\mu$ and content $\beta = (\be_1, \be_2,\ldots,\be_l)$, there is at most one SRHT of shape $\mu$ and content $\beta$. In other words, if an SRHT with a given partition shape and composition content exists, then it is unique. To see this, observe  that in order to remove a special $\be_l$-hook, we start at the bottom cell of the first column. From here, we take unit steps northwards or eastwards for a total of $\be_l$ steps such that the set of cells traversed forms a unique $\be_l$-ribbon. The steps we take are entirely dictated by the shape $\mu$, and so a special $\be_l$-ribbon either exists or does not. If we cannot form a $\be_l$-ribbon, we declare that no SRHT with shape $\mu$ and content $\be$ exists. Otherwise, we remove the $\be_l$-ribbon and continue this process for $\be_{l-1},\be_{l-2},\ldots, \be_1$. If at any point, we cannot remove a $\be_i$-ribbon, then we claim that the SRHT with the given shape and content does not exist. Otherwise, we have a unique SRHT $T_{\mu,\be}$. Define $t_{\mu,\be} = \sgn(T_{\mu,\be})$, and $t_{\mu,\be} = 0$ when no SRHT with shape $\mu$ and content $\be$ exists.
\begin{prop}[Composition refinement of Theorem 1 in \cite{inv-kostka}]\label{prop:h-exp-of-s}
For $\mu$ a partition of $n$, 
\[
s_\mu = \sum_{\be\in \Com(n)} t_{\mu,\be} h_\be.
\]
\end{prop}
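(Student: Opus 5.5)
The plan is to expand the Jacobi--Trudi determinant as a signed sum over permutations and match each surviving term to a unique special rim-hook tableau with the same sign. Write $\mu = (\mu_1,\ldots,\mu_l)$ and expand
\[
s_\mu = \det\bigl[h_{\mu_i - i + j}\bigr]_{1\le i,j\le l} = \sum_{w\in \SS_l} \sgn(w) \prod_{i=1}^{l} h_{\mu_i - i + w(i)},
\]
with $h_0 = 1$ and $h_m = 0$ for $m<0$. Each nonzero term is $\sgn(w)h_\ga$ for the weak composition $\ga = (\mu_i - i + w(i))_i$. Deleting zero entries gives a genuine composition $\be$, and since $h_\ga = h_\be$ as a product, this produces a composition-indexed expansion. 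The content of the proposition is that the coefficient attached to each composition $\be$ is exactly $t_{\mu,\be}$.

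To establish this, I will build a bijection, by induction on $l$, between nonzero terms $(w,\ga)$ and SRHTs. The key geometric fact concerns removing a special $k$-ribbon from $\mu$. The ribbon starts at the bottom cell of column one and follows the rim, so it exists if and only if the hook-set picture of Proposition \ref{prop:hook-removal} and Remark \ref{rem:special-hook-removal} applies with $r = l$. Concretely, the entry $h = \mu_i + l - i$ of $\mcH_l(\mu)$ is replaced by the smallest value not in $\mcH_l(\mu)$, after which the hook-set is re-sorted. The resulting partition $\nu$ has one fewer row exactly when the bottom bead goes to $0$, and its sign is $(-1)^{i'-1}$-type, counting the beads jumped over.

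Matching to the determinant, I will expand the determinant along its last row (row $l$). The entry in column $j$ is $h_{\mu_l - l + j}$, and deleting row $l$ and column $j$ leaves a minor. I need to show that this minor is, up to the sign $(-1)^{l-j}$, the Jacobi--Trudi determinant of the partition obtained by removing a special ribbon of size $\mu_l - l + j$. Rather than redo that computation directly, I will phrase it on beads: the factor $h_{\be_l}$ with $\be_l = \mu_{i} - i + w(i)$ in the last slot corresponds to sliding one bead to the lowest empty position. The number of beads jumped equals the number of row transpositions needed to restore sorted order, and this is the ribbon height. Iterating peels off $\be_l, \be_{l-1}, \ldots$ in order, which is exactly the unique SRHT construction described before the proposition. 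Multiplying the signs gives $\sgn(w) = \sgn(T_{\mu,\be})$. Terms that hit a zero entry correspond to rows emptied completely, which is consistent with $\ell(\be) \le l$.

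The main obstacle is a bookkeeping issue: the determinant expansion is naturally indexed by permutations of rows, whereas SRHTs are indexed by an ordered sequence of ribbon removals. I must verify that each composition $\be$ arises from at most one permutation $w$ with the zero entries placed correctly, so that no cancellation or multiplicity occurs. The cleanest route is to follow E\u{g}ecio\u{g}lu--Remmel: order the factors of each term by the row index $i$, read $\be$ as the nonzero values of $\ga$ in row order, and check that $w$ is recovered from $\be$ by greedily removing special ribbons. Uniqueness of the SRHT, already argued in the paper, then gives injectivity. If this ordering convention causes trouble, the fallback is to cite Theorem 1 of \cite{inv-kostka} for the partition-indexed statement and refine it to compositions through this explicit bijection.
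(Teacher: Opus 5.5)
Your overall strategy is sound: expand Jacobi--Trudi and match nonzero terms with SRHTs via special sliding. But both of your concrete bookkeeping choices are wrong, and the steps built on them fail.

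\textbf{The expansion must be along the last column, not the last row.} Row $i$ of the matrix corresponds to the bead $b_i=\mu_i+l-i$, and column $j$ corresponds to the target position $l-j$. So $h_{\mu_i-i+j}$ is the distance from $b_i$ to $l-j$. A special slide sends a bead to the smallest gap, which is $0$ when $l=\ell(\mu)$, and that is column $l$. The last-\emph{row} entries instead send the smallest bead $b_l$ to arbitrary targets. For example, take $\mu=(2,2)$, where $s_{22}=h_2h_2-h_1h_3$. The $(2,1)$-cofactor term $-h_1\cdot h_3$ is nonzero, yet no special $1$-ribbon can be removed from $(2,2)$, since the cell $(2,1)$ is not removable. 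So your claimed minor identity is false.

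Along the last \emph{column} the identity does hold:
\begin{itemize}
\item Deleting row $i$ and column $l$ leaves exactly the Jacobi--Trudi matrix, with $l-1$ rows, of $\nu=\mu$ minus the special ribbon of size $\mu_i-i+l$, because $\mcH_{l-1}(\nu)=\{b_{i'}-1: i'\neq i\}$.
\item The cofactor sign $(-1)^{l+i}$ equals $(-1)^{\height}$ of that ribbon, since the bead jumps over exactly $b_{i+1},\ldots,b_l$.
\item When $\mu_l=0$, every minor except $M_{l,l}$ vanishes, which accounts for the $h_0$ factors.
\end{itemize}

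\textbf{The composition must be read in column order, not row order.} Reading $\be$ off $\ga$ in row order gives the wrong composition. The transposition term above has $\ga=(3,1)$, but $t_{(2,2),(3,1)}=0$: once the hook $(2,1)$ is labeled $1$, the remaining cell $(2,2)$ is not a special ribbon. The SRHT that term actually corresponds to has content $(1,3)$ and sign $-1$. So the claim you set out to prove, that the coefficient of each row-order $\be$ equals $t_{\mu,\be}$, is false, and greedy recovery of $w$ from $(3,1)$ fails.

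You must take $\be$ to be the nonzero entries of $(\ga_{w^{-1}(1)},\ldots,\ga_{w^{-1}(l)})$. This is the paper's $\comrev$ of the weak composition indexed by final bead positions. With this convention injectivity is automatic: at each step the moving bead is the unique one at the prescribed distance from the target, and the zero factors are forced by the shape.

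\textbf{Comparison with the paper.} The paper's proof does none of this. Because $h_\be=h_{\sort(\be)}$, the composition-indexed identity is only a regrouping of E\u{g}ecio\u{g}lu--Remmel's partition-indexed theorem, whose signed objects of type $\la$ are exactly the $T_{\mu,\be}$ with $\sort(\be)=\la$. Your fallback of citing their theorem is therefore enough by itself, and it needs no explicit bijection with determinant terms.
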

\begin{proof}
Let $\mu$ and $\la$ be partitions. E\u{g}ecio\u{g}lu and Remmel~\cite{inv-kostka} show that the coefficient of $h_\mu$ in the partition-indexed $h$-expansion of the Schur function $s_\la$ can be computed by summing $\sgn(T)$ over all SRHT $T$ whose content compositions sort to $\mu$. Our claim is a composition-refined version of their result.
\end{proof}

Following Remark \ref{rem:special-hook-removal}, the removal of a special ribbon involves sliding a bead into the gap in the smallest position. We can also view this procedure as first choosing a bead, deleting said bead, and then placing a bead in the gap with the smallest index. We call this procedure \textit{special sliding}. If the chosen bead is to the left of the gap with the smallest index, then special sliding ensures that the bead stays in the same position, as deletion of said bead creates a gap of in the smallest index, and which results in the bead staying in its original position.

Let $r$ and $n$ be positive integers. In our subsequent discussions, all partitions $\mu$ are $r$-decomposable unless stated otherwise. 
%Recall that the abacus $\ab_r(\mu)$ contains $r$ runners with exactly one bead per runner. Let $\pi = (\pi_0, \pi_1,\ldots, \pi_{r-1})$ be a $r$-permutation. We construct a SRHT of shape $\mu$ by special sliding each bead in position $b_{\pi_i}$ in runner $\pi_i$ in $\ab_r(\la)$ to the position $i$ in $\ab_r(\vn)$. 

Define an \textit{$r$-weak composition of $n$} to be an $r$-tuple of non-negative integers that sum to $n$. Denote their set by $\WCom_r(n)$. For example $(2,0,3,3,0,1)$ is a 6-weak composition of $9$. Define $\comrev: \WCom_r(n)\to \Com(n)$ by sending a $r$-weak composition $w = (w_0,w_1,\ldots, w_{r-1})$ to the composition obtained by removing all zeros in $(w_{r-1},w_{r-2},\ldots, w_0)$ while preserving the order of entries. For example, $\comrev((2,0,3,3,0,1)) = (1,3,3,2)$. This operation is useful in mapping an $r$-weak composition to the content of some SRHT as we now explain.

Let $\mu$ be a partition of $n$ and $\be = (\be_1,\be_2,\ldots, \be_k)$ be a composition of $n$ such that the SRHT $T_{\mu,\be}$ of shape $\mu$ and content $\be$ exists. Suppose $r\geq \ell(\la)$. We associate an $r$-weak composition $w = (w_0, w_1, \ldots, w_{r-1})$ of $n$ with $T_{\mu,\be}$ as follows: remove the largest labeled special rim-hook of size $\beta_k$ while special sliding the corresponding bead on $\ab_r(\mu)$. As we special slide this bead, it must end in row 0, as there are only $r$ beads. If the new position of the bead is $i$, then let $w_i = \be_k$. Continue this process by removing special rim-hooks of sizes $\be_{k-1},\ldots, \be_1$ and assigning these sizes as values to the entries of $w$. For every unassigned $w_i$, set it equal to zero. Note that as at each step, the bead we special slide ends up in larger position than the bead slid in a previous step. This means that $w$ contains the entries $\be_k, \be_{k-1},\ldots, \be_1$ as a subsequence with rest of the entries zero. Conversely, suppose we start with an $r$-weak composition $w = (w_0, w_1, \ldots, w_{r-1})$. Try to construct $\mu$ such that $\ab_r(\mu)$ has beads in positions $b_i = i + w_i$. If two beads end up in the same position or in the same runner, define $t_{w} = 0$. Otherwise, the resulting abacus encodes a special rim-hook tableau of shape $\mu$ and content $\be = \comrev(w)$ which can be seen by running the bead sliding process in reverse as before. In this case, define $t_{w} = t_{\mu,\be}$. Denote the abacus with beads in positions $i + w_i$ by $\ab_r(w)$.
\begin{example}\label{srht-ex-abacus}
We showcase the above procedure by continuing with Example \ref{ex:srht-ex1}. Recall that in that example, we had a unique SRHT of shape $\la = (6,4,4,3,2,2)$ and content $\be = (4,8,6,3)$. As $\la$ is a 7-decomposable partition, we choose $r= 7$. We start with $\ab_7(\la)$ 
\begin{center}
\begin{tikzpicture}[shift up/.style = {transform canvas={yshift=.5mm}},shift down/.style = {transform canvas={yshift=-.5mm}},]\matrix (m) [matrix of nodes, column sep = .75cm, row sep = 0.2cm, nodes = {anchor = center}]
{0 & 1 & 2 & 3 & 4 & 5 & 6\\ 7 & 8 & 9 & 10 & 11 & 12 & 13\\ };
\draw (m-2-6) circle (0.3cm);
\draw (m-2-3) circle (0.3cm);
\draw (m-2-2) circle (0.3cm);
\draw (m-1-7) circle (0.3cm);
\draw (m-1-5) circle (0.3cm);
\draw (m-1-4) circle (0.3cm);
\draw (m-1-1) circle (0.3cm);
\end{tikzpicture}
\end{center} and special slide the bead in position $4$ $\be_4 = 3$ steps which gives
\begin{center}
\begin{tikzpicture}[shift up/.style = {transform canvas={yshift=.5mm}},shift down/.style = {transform canvas={yshift=-.5mm}},]\matrix (m) [matrix of nodes, column sep = .75cm, row sep = 0.2cm, nodes = {anchor = center}]
{0 & 1 & 2 & 3 & 4 & 5 & 6\\ 7 & 8 & 9 & 10 & 11 & 12 & 13\\ };
\draw (m-2-6) circle (0.3cm);
\draw (m-2-3) circle (0.3cm);
\draw (m-2-2) circle (0.3cm);
\draw (m-1-7) circle (0.3cm);
\draw (m-1-4) circle (0.3cm);
\draw (m-1-2) circle (0.3cm);
\draw (m-1-1) circle (0.3cm);
\end{tikzpicture}
\end{center} 
As the bead ends up in the first runner, we assign $w_1 = 3$. Note that we jump over 1 bead. Now, we special slide the bead in position 8 to position 2 $\be_3 = 6$ steps while jumping over 2 beads which gives
\begin{center}
\begin{tikzpicture}[shift up/.style = {transform canvas={yshift=.5mm}},shift down/.style = {transform canvas={yshift=-.5mm}},]\matrix (m) [matrix of nodes, column sep = .75cm, row sep = 0.2cm, nodes = {anchor = center}]
{0 & 1 & 2 & 3 & 4 & 5 & 6\\ 7 & 8 & 9 & 10 & 11 & 12 & 13\\ };
\draw (m-2-6) circle (0.3cm);
\draw (m-2-3) circle (0.3cm);
\draw (m-1-7) circle (0.3cm);
\draw (m-1-4) circle (0.3cm);
\draw (m-1-3) circle (0.3cm);
\draw (m-1-2) circle (0.3cm);
\draw (m-1-1) circle (0.3cm);
\end{tikzpicture}
\end{center}
We set $w_2 = 6$. We special slide the bead in position 12 to position 4 while jumping over 2 beads. As we end up in the 4th runner, $w_4 = 12 - 4 = 8$. We obtain the abacus
\begin{center}\begin{tikzpicture}[shift up/.style = {transform canvas={yshift=.5mm}},shift down/.style = {transform canvas={yshift=-.5mm}},]\matrix (m) [matrix of nodes, column sep = .75cm, row sep = 0.2cm, nodes = {anchor = center}]
{0 & 1 & 2 & 3 & 4 & 5 & 6\\ 7 & 8 & 9 & 10 & 11 & 12 & 13\\ };
\draw (m-2-3) circle (0.3cm);
\draw (m-1-7) circle (0.3cm);
\draw (m-1-5) circle (0.3cm);
\draw (m-1-4) circle (0.3cm);
\draw (m-1-3) circle (0.3cm);
\draw (m-1-2) circle (0.3cm);
\draw (m-1-1) circle (0.3cm);
\end{tikzpicture}\end{center}
Finally, we special slide the bead in position 9 to position 5 while jumping over 1 bead to obtain $\ab_7(\vn)$:
\begin{center}\begin{tikzpicture}[shift up/.style = {transform canvas={yshift=.5mm}},shift down/.style = {transform canvas={yshift=-.5mm}},]\matrix (m) [matrix of nodes, column sep = .75cm, row sep = 0.2cm, nodes = {anchor = center}]
{0 & 1 & 2 & 3 & 4 & 5 & 6\\ 7 & 8 & 9 & 10 & 11 & 12 & 13\\ };
\draw (m-1-7) circle (0.3cm);
\draw (m-1-6) circle (0.3cm);
\draw (m-1-5) circle (0.3cm);
\draw (m-1-4) circle (0.3cm);
\draw (m-1-3) circle (0.3cm);
\draw (m-1-2) circle (0.3cm);
\draw (m-1-1) circle (0.3cm);
\end{tikzpicture}\end{center}
Because we end up in runner 5, $w_5 = 4$. So, we find $(w_0, w_1, \ldots, w_7) = (0,3,6,0,8,4,0)$. The total number of jumps are $1+2+2+1 = 6$, and so $t_w = t_{\mu,\be} = (-1)^6 = 1$.
\end{example}

%Suppose we have the partition $\la$ and a list $w = (w_1, w_2, \ldots, w_k)$ of non-negative integers such that we obtain $\ab^k_r(\vn)$ from $\ab^k_r(\la)$ by special sliding beads successively from position $i$ in $\ab^k_r(\la)$ to position $i + w_i$ in $\ab^k_r(\la)$. For any list $\be = (\be_1, \be_2, \ldots, \be_l)$, define $\rev(\beta) = (\be_l, \be_{l-1},\ldots, \be_1)$. Define $\comrev(w)$ to be the composition obtained by removing all zeroes in $\rev(w)$ and preserving the order of the entries. For $i = 1,2,\ldots, k$, if in the $i$th step, we slide the bead by $\al_i$ positions, then with this bead sliding procedure we associate the composition $\rev(\alpha) = (\al_k,\al_{k-1}, \ldots, \al_1)$. The composition is reversed as the special ribbons are removed in the descending order of labels.
For an $r$-weak composition $w$, define $h_w = h_{w_0}h_{w_1}\ldots h_{w_{r-1}}$. Note that $h_w = h_{\comrev(w)}$ due to commutativity of multiplication and $h_0 = 1$. 
\begin{lemma}\label{lem:h-exp-of-s-w}
For an $r$-decomposable partition of $\mu$,
\[
s_\mu = \sum_{\substack{w\in \WCom_r(n)\\ \ab_r(w) = \ab_r(\mu)}} t_w h_w.
\]
\end{lemma}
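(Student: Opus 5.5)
We start from Proposition \ref{prop:h-exp-of-s}, which gives $s_\mu = \sum_{\be\in \Com(n)} t_{\mu,\be} h_\be$. Only compositions $\be$ for which the SRHT $T_{\mu,\be}$ exists contribute. The plan is to reindex this sum by $r$-weak compositions. Since $h_w = h_{\comrev(w)}$ and $t_w = t_{\mu,\comrev(w)}$ whenever $\ab_r(w) = \ab_r(\mu)$, it suffices to construct a bijection between two sets:
\begin{itemize}
\item the set of compositions $\be$ of $n$ for which $T_{\mu,\be}$ exists, and
\item the set of $w\in\WCom_r(n)$ with $\ab_r(w)=\ab_r(\mu)$.
\end{itemize}
The bijection should send $\be$ to a $w$ with $\comrev(w)=\be$.

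\textbf{Forward map.} Given $T_{\mu,\be}$, use the special-sliding construction described before the lemma. Remove the special ribbons of sizes $\be_k,\be_{k-1},\ldots,\be_1$ in turn. By Remark \ref{rem:special-hook-removal}, each removal moves one bead from position $b$ into the smallest gap $g$, with $b-g$ equal to the ribbon size.

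We need to check that each gap filled lies in row $0$. There are exactly $r$ beads, so while the configuration is not $\ab_r(\vn)$, the smallest gap is some $g\le r-1$. Hence the bead lands in row $0$, and we set $w_g=\be_i$.

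Once a position $g<r$ is filled, every position below $g$ stays filled. The reason is that the only moves are from a bead into the smallest gap. Therefore the successive landing positions strictly increase. It follows that:
\begin{itemize}
\item no entry of $w$ is assigned twice;
\item reading $w$ backwards and deleting zeros recovers $(\be_1,\ldots,\be_k)$, that is, $\comrev(w)=\be$.
\end{itemize}
Every bead that is never moved sits at some position $i<r$ with $w_i=0$. Every moved bead started at $g+w_g$. Hence the bead set of $\ab_r(\mu)$ is exactly $\{i+w_i\}$, so $\ab_r(w)=\ab_r(\mu)$.

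\textbf{Inverse map.} Given $w$ with $\ab_r(w)=\ab_r(\mu)$, the positions $i+w_i$ are distinct and are exactly the beads of $\mu$. Perform special slides in increasing order of $i$ over those $i$ with $w_i>0$, moving the bead at $i+w_i$ to $i$.

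We must check that at each such step, $i$ is the smallest gap. All positions $j<i$ are occupied at that stage, for two reasons:
\begin{itemize}
\item if $w_j>0$, the bead for $j$ has already been slid to $j$;
\item if $w_j=0$, the bead sitting at $j+w_j=j$ was never moved.
\end{itemize}
Position $i$ itself is empty. It can only be occupied by the bead $j+w_j$ with $j+w_j=i$, which forces $j<i$ and $w_j>0$, and that bead has already been moved. Also, no other bead has yet occupied $i$.

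So each step is a legitimate special-ribbon removal by Remark \ref{rem:special-hook-removal}. The procedure produces an SRHT of shape $\mu$ and content $\comrev(w)$. By uniqueness of SRHTs it equals $T_{\mu,\comrev(w)}$, and the two constructions are mutually inverse.

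\textbf{Main obstacle.} The delicate point is the ordering bookkeeping: verifying that the smallest gap is always in row $0$ and that the landing positions increase. This is what makes $w$ well defined and ensures that $\comrev$ recovers $\be$ rather than some permutation of it.

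Once the bijection is established, the signs agree by the definition $t_w=t_{\mu,\comrev(w)}$. Substituting into Proposition \ref{prop:h-exp-of-s} gives the stated formula.
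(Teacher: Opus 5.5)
Your proposal is correct and follows the paper's route. You reindex the sum in Proposition \ref{prop:h-exp-of-s} through the special-sliding bijection $\be \leftrightarrow w$ with $\comrev(w)=\be$, using $h_w=h_{\comrev(w)}$ and $t_w=t_{\mu,\comrev(w)}$. The paper only asserts this bijection from the preceding discussion, whereas you verify it: landings are in row $0$ in strictly increasing position, and the inverse slides always fill the smallest gap. Your inverse map does use the converse of Remark \ref{rem:special-hook-removal} (a slide into the smallest gap removes a special ribbon), which the remark does not state. That converse is true and the paper relies on it implicitly as well.
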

\begin{proof}
In the $h$-expansion of $s_\mu$ (Proposition \ref{prop:h-exp-of-s}), every term $h_\beta$ that appears on the right hand side with a non-zero coefficient corresponds to an SRHT of shape $\mu$ and content $\beta$. With such an SRHT we associate an $r$-weak composition $w$ satisfying $t_w = t_{\mu,\be}$ as discussed before. The indexing compositions on the right hand side of Proposition \ref{prop:h-exp-of-s} are in bijection with $r$-weak compositions of $n$ satisfying $\ab_r(w) = \ab_r(\mu)$. This shows $s_\mu = \sum\limits_{\substack{w\in \WCom_r(n)\\ \ab_r(w) = \ab_r(\mu)}} t_w h_w$. 
\end{proof}

%27 Aug

We implement our strategy of finding the $h$-expansion of $h_n[p_r]$ using abaci. We first construct an abacus with $r$-runners for each term $s_\mu$ of the $s$-expansion of $h_n[p_r]$. Recall that these abaci have exactly one bead per runner and the coefficient of $s_\mu$ is $\sgn_r(\mu)$. Then for each such abacus, successively special sliding beads gives us an $r$-weak composition $w$ through the process described above. The sign of this process is $t_w$ and the resulting sign of $h_w$ in this $r$-weak composition-indexed $h$-expansion of $h_n[p_r]$ is thus computed as the product $\sgn_r(\mu)t_w$.

\begin{lemma}\label{lem:sign-pirho}
Suppose $\mu$ is an $r$-decomposable partition of $nr$ such that $\ab_r(\mu)$ has beads in positions $b_0 < b_1 <\ldots < b_{r-1}$. Suppose $\pi$ is the $r$-permutation such that runner $i$ contains the bead $b_{\pi_i}$. Let $\rho$ be a permutation and $w$ be an $r$-weak composition $w = (w_0, w_1,\ldots, w_{r-1})$ such that $S$ is the special rim-hook tableau obtained by special sliding the bead in runner $\rho_i$ $w_i$ steps to position $i$. Then $\pi^S = \pi \cdot \rho$ and $t_w = \sgn_r(\mu) \sgn(\rho)$.
\end{lemma}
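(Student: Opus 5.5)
We prove the two claims in order, following the labelling conventions of Proposition \ref{prop:sign-of-abacus-is-rht} and Lemma \ref{lem:sign-of-tableau}. Label the bead in position $b_j$ of $\ab_r(\mu)$ with $j$. Runner $i$ carries the bead with label $\pi_i$, and it sits in position $b_{\pi_i}$.

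For the claim $\pi^S = \pi\cdot\rho$, recall that $\pi^S_i$ is the label of the bead that ends in position $i$ once every ribbon of $S$ has been removed. By hypothesis, the bead ending in position $i$ is the one originally in runner $\rho_i$. Its label is therefore $\pi_{\rho_i}$, which is exactly the $i$th entry of $\pi\cdot\rho$.

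One point needs care. During special sliding, a bead may land in row $0$ and remain fixed, and later special slides must not move it. As noted after Remark \ref{rem:special-hook-removal}, a bead to the left of the smallest gap is unaffected by special sliding. Since each slid bead lands in a position larger than the previous ones, the final position of every bead is well defined. The zero entries $w_i=0$ are beads already at position $i$ and are never moved. This identifies final positions with labels.

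For the sign claim, $t_w = t_{\mu,\be} = \sgn(S)$ by the definition of $t_w$. By Proposition \ref{prop:sign-of-abacus-is-rht}, $\sgn(S) = \sgn(\pi^S)$. This requires $S$ to have at most $r$ rows, which holds because $\mu$ is $r$-decomposable. Then Lemma \ref{lem:sign-of-product} gives $\sgn(\pi^S) = \sgn(\pi)\sgn(\rho)$, and Lemma \ref{lem:sign-of-tableau} gives $\sgn(\pi) = \sgn_r(\mu)$. Combining these, $t_w = \sgn_r(\mu)\sgn(\rho)$.

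The main obstacle is that Proposition \ref{prop:sign-of-abacus-is-rht} was argued with ribbon-removal moves where a bead slides by one ribbon, whereas special sliding moves a bead from runner $\rho_i$ to position $i$, possibly in a different runner. A special slide is still a single ribbon removal, though, with sign $(-1)^{\#\text{beads jumped}}$ by Proposition \ref{prop:hook-removal}. The inductive argument of Proposition \ref{prop:sign-of-abacus-is-rht} only uses that each removal reorders the labels by moving one bead past $k$ beads, contributing $k$ inversions. So the argument applies verbatim to SRHTs. I would state this explicitly, checking that the composition order in the induction, $\pi^S = (\text{step permutation})\cdot\pi^{S'}$, matches the convention for $\pi\cdot\rho$.
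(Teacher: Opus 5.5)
Your proof is correct and follows essentially the same route as the paper: you read off the label ending at position $i$ as $\pi_{\rho_i}$ to get $\pi^S=\pi\cdot\rho$, then chain $t_w=\sgn(S)=\sgn(\pi^S)=\sgn(\pi)\sgn(\rho)=\sgn_r(\mu)\sgn(\rho)$ via Proposition \ref{prop:sign-of-abacus-is-rht}, Lemma \ref{lem:sign-of-product}, and Lemma \ref{lem:sign-of-tableau}. The ``obstacle'' you flag is not really one: Proposition \ref{prop:sign-of-abacus-is-rht} is already stated for arbitrary rim-hook tableaux with at most $r$ rows, where ribbon removals generally move beads across runners too, and an SRHT is simply a special case.
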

\begin{proof}
As we remove special ribbons from $\mu$ according to $S$ to obtain $\vn$, we special slide the bead in runner $\rho_i$, that is, in position $b_{\pi_{\rho_i}}$ to position $i$ in $\ab_r(\vn)$. We read off the permutation formed by the labels of beads in row 0 as $\pi^S = (\pi_{\rho_0}, \pi_{\rho_1},\ldots, \pi_{\rho_{r-1}})$ which is equal to $\pi\cdot \rho$. By Lemma \ref{lem:sign-of-product}, $\sgn(\pi^S) = \sgn(\pi)\sgn(\rho)$. By Lemma \ref{lem:sign-of-tableau}, $\sgn(\pi) = \sgn_r(\mu)$ and as we obtain $w$ from $S$, by definition $t_w = \sgn(S)$. By Proposition \ref{prop:sign-of-abacus-is-rht}, $\sgn(S) = \sgn(\pi^S) = \sgn(\pi)\sgn(\rho)$.  This shows $t_w = \sgn_r(\mu) \sgn(\rho)$
\end{proof}
%Aug 28
Continuing the notation from Lemma \ref{lem:sign-pirho}, we express the permutation $\rho$ in terms of the $r$-weak composition $w$.
We special slide the bead in runner $\rho_i$ to position $i$ $w_i$ steps, which means that position $i + w_i$ is in runner $\rho_i$. All positions in a given runner in an abacus with $r$ runners are in the same equivalence class modulo $r$. Thus, $\rho_i$ is the remainder obtained upon dividing $i+w_i$ by $r$. For an $r$-weak composition $w$, define $w\% r$ to be the operation that replaces each entry of $w$ with its remainder upon division by $r$. For $r$-weak compositions $w$ and $w'$, define $w+w'$ as the $r$-weak composition whose $i$th entry is $w_i + w'_i$ for $i = 0,1,\ldots, r-1$. Define $\de_r = (0,1,2,\ldots, r-1)$. 

The resultant list $(w + \de_r)\%r$ has as its $i$th entry the index of the runner containing the bead the special slides to $i$. This is exactly the permutation $\rho$. If $\ab_r(w)$ is an abacus of a partition with exactly one bead per runner, then $(w + \de_r)\%r$ is a $r$-permutation. For an $r$-weak composition $w$, suppose $(w + \delta_r)\%r$ is a permutation. Then define
\[
\hat{s}_r(w) = (-1)^{\inv((w + \delta_r)\%r)}.
\]
On the other hand, if $(w + \de_r)\%r$ is not a permutation, we set $\hat{s}_r(w) = 0$.

\begin{example}
Choose $r = 7$. Consider the $7$-weak composition $w = (3,12,13,6,7,2,6)$ of 49. We find $w + \de_7 = (3,13,15,9,11,7,12)$ which gives $(w+\de_7)\%7 = (3,6,1,2,4,0,5)$. This resulting permutation contains 11 inversions and thus $\hat{s}_7(w) = (-1)^{11} = -1$.
 On the other hand, consider the 7-weak composition $w' = (8,15,0,16,2,3,5)$ of 49. We find $w' + \de_7 = (8,16,2,19,6,8,11)$ and $(w'+\de_7)\%7= (1,2,2,5,6,1,4)$. The latter is not a permutation and thus $\hat{s}_7(w') = 0$.
\end{example}

\begin{theorem}\label{thm:h-exp-of-hnpr}
Let $n,r\geq 1$ be integers. Then, \[h_n[p_r] = \sum_{w\in \WCom_r(nr)} \hat{s}_r(w) h_w.\]
\end{theorem}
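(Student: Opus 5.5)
We combine Proposition \ref{prop:hnpr-in-s} with Lemma \ref{lem:h-exp-of-s-w} and then reorganize the resulting double sum by the indexing weak composition. By Proposition \ref{prop:hnpr-in-s}, $h_n[p_r] = \sum_{\mu} \sgn_r(\mu) s_\mu$, where only $r$-decomposable partitions $\mu$ of $nr$ contribute. For each such $\mu$, Lemma \ref{lem:h-exp-of-s-w} gives
\[
s_\mu = \sum_{\substack{w\in \WCom_r(nr)\\ \ab_r(w) = \ab_r(\mu)}} t_w h_w .
\]
Substituting, we obtain
\[
h_n[p_r] = \sum_{\mu} \sum_{\substack{w\in \WCom_r(nr)\\ \ab_r(w) = \ab_r(\mu)}} \sgn_r(\mu)\, t_w\, h_w .
\]

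The second step is to swap the order of summation, so that the sum runs over all $w\in\WCom_r(nr)$. For a fixed $w$, place beads in positions $i+w_i$ for $i=0,\dots,r-1$. There are two cases.
\begin{itemize}
\item If two of these positions fall in the same runner, then $(w+\de_r)\%r$ is not a permutation. No $r$-decomposable $\mu$ has $\ab_r(\mu)=\ab_r(w)$, so $w$ contributes nothing, and indeed $\hat{s}_r(w)=0$. The sub-case of two beads in the same position is included here, since equal positions lie in the same runner.
\item Otherwise the beads occupy distinct runners, one bead per runner. Then $\ab_r(w)$ is the abacus of a unique partition $\mu$, and $\mu$ is $r$-decomposable by the one-bead-per-runner criterion. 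Its size is $nr$, because the bead positions sum to $|\mu| + \binom{r}{2}$ and also to $\sum_i (i+w_i) = \binom{r}{2} + nr$.
\end{itemize}
So each $w$ is attached to at most one $\mu$, and the coefficient of $h_w$ is $\sgn_r(\mu)t_w$.

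The third step evaluates this coefficient with Lemma \ref{lem:sign-pirho}, which gives $t_w = \sgn_r(\mu)\sgn(\rho)$. Here $\rho$ is the permutation recording which runner's bead is special-slid to position $i$. As argued just before the theorem, $\rho = (w+\de_r)\%r$. Since $\sgn_r(\mu)^2 = 1$, the coefficient becomes
\[
\sgn_r(\mu)\, t_w = \sgn(\rho) = (-1)^{\inv((w+\de_r)\%r)} = \hat{s}_r(w).
\]
Summing over all $w$ then yields the theorem.

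The main obstacle is justifying the bijection underlying the interchange of summation. For a fixed $r$-decomposable $\mu$, the sliding procedure must produce exactly the weak compositions $w$ with $\ab_r(w)=\ab_r(\mu)$, each exactly once. Conversely, every $w$ with one bead per runner must actually correspond to a valid special rim-hook tableau and a valid order of special slides. The delicate point is the paper's convention that the successively slid beads land in increasing positions $i$. I would check that when the beads are placed at $i+w_i$, sliding the bead destined for the largest $i$ with $w_i>0$ first is a legitimate special slide at each stage: the smallest gap must be exactly $i$ at that moment. Positions $j<i$ with $w_j=0$ already hold beads that never move, which matches the remark that a bead to the left of the smallest gap stays fixed. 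Once this consistency is checked, Lemma \ref{lem:h-exp-of-s-w} applies directly, and the remaining steps are bookkeeping.
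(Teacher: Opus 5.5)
Your proposal is correct and is essentially the paper's proof. You substitute Lemma~\ref{lem:h-exp-of-s-w} into Proposition~\ref{prop:hnpr-in-s}, reindex by $w$ (the case where $(w+\de_r)\%r$ is not a permutation contributes zero on both sides), and apply Lemma~\ref{lem:sign-pirho} with $\rho=(w+\de_r)\%r$ and $\sgn_r(\mu)^2=1$. Your check that $|\mu|=nr$ via the sum of bead positions is a detail the paper leaves implicit.

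There is one slip in your last paragraph. Starting from $\ab_r(w)$, the special slides must land at the indices $i$ with $w_i>0$ in \emph{increasing} order. The largest-labeled ribbon $\be_k$ goes to the smallest such $i$, because $\comrev$ reverses $w$. Initially the smallest gap is the smallest $i$ with $w_i>0$: all positions $j<i$ hold beads, and every other bead sits at $j+w_j>i$. So sliding toward the largest $i$ first is not a special slide once $w$ has two positive entries. With the increasing order your consistency check goes through, and it is in any case the content of Lemma~\ref{lem:h-exp-of-s-w}.
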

\begin{proof}
Combining the expansions from Proposition \ref{prop:hnpr-in-s} and Lemma \ref{lem:h-exp-of-s-w}, we get
\[
h_n[p_r] = \sum_{\mu\in \Par(nr)} \sum_{\substack{w\in \WCom_r(n)\\ \ab_r(w) = \ab_r(\mu)}}  \sgn_r(\mu) t_w h_w. 
\]
We choose an $r$-weak composition $w$ such that $\ab_r(w) = \ab_r(\mu)$ contains exactly one bead in each runner. Suppose $\rho = (w+\de_r)\%r$ is the permutation such that $i+w_i$ lies in runner $\rho_i$. In Lemma \ref{lem:sign-pirho}, we found $t_w = \sgn_r(\mu) \sgn(\rho)$. Multiplying $\sgn_r(\mu)$ on both sides, we find $\sgn_r(\mu)t_w = \sgn(\rho)$ as $\sgn_r(\mu) \in \{-1,1\}$. It follows that $\sgn_r(\mu)t_w = \hat{s}_r(w)$. This shows that $h_n[p_r] = \sum_{w} \hat{s}_r(w) h_w$ where the sum is over all $r$-weak compositions of $nr$ for which $\ab_r(w)$ contains one bead per runner, or equivalently, $(w+\de_r)\%r$ is a permutation.

 If $(w+\de_r)\%r$ is not a permutation, then $j+w_j$ and $k+w_k$ are in the same runner. This means $\mu$ is not $r$-decomposable. Thus, both $\sgn_r(\mu)$ and $\hat{s}_r(w)$ are zero and the equality $\sgn_r(\mu)t_w = \hat{s}_r(w)$ still holds.
\end{proof}

We can now combine like terms to find a partition-indexed expansion of $h_n[p_r]$. First, let $\beta$ be a composition of length at most $r$. Define $\hat{s}_r(\be) = \sum_{w}\hat{s}_r(w)$ where the sum is over all $r$-weak compositions $w$ which yield $\be$ when all zeroes are removed and the order of entries is preserved. For instances, $\hat{s}_3((1,2)) = \hat{s}_3((1,2,0)) + \hat{s}_3((1,0,2)) + \hat{s}_3((0,1,2))$. For a composition $\beta$ of length strictly larger than $r$, define $\hat{s}_r(\be) = 0$. For a partition $\mu$, define $\hat{s}_r(\mu) = \sum_{\be} \hat{s}_r(\be)$ where the sum is over all compositions $\be$ that sort to $\mu$.
\begin{corollary}\label{cor:h-expansions}
For positive integers $n$ and $r$, $
h_n[p_r] = \sum_{\be\in \Com(nr)} \hat{s}_r(\be) h_\be.$
The coefficient of $h_\mu$ in the partition-indexed $h$-expansion of $h_n[p_r]$ is $\hat{s}_r(\mu)$.
\end{corollary}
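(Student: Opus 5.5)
The plan is to start from Theorem \ref{thm:h-exp-of-hnpr}, which expresses $h_n[p_r]$ as $\sum_{w\in \WCom_r(nr)} \hat{s}_r(w) h_w$, and regroup terms in two successive stages: first by the composition $\comrev(w)$, then by the partition $\sort(\comrev(w))$. No new combinatorics should be needed; the corollary is a bookkeeping rearrangement of a finite sum.

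For the composition-indexed statement, I will partition $\WCom_r(nr)$ into fibers according to the composition obtained from $w$ by deleting zeros. The text defining $\hat{s}_r(\be)$ uses the left-to-right order of nonzero entries, whereas $\comrev$ reverses this order. This is harmless. Each fiber consists of $r$-weak compositions whose nonzero entries, read in a fixed order, give a fixed composition $\be$ of $nr$ with $\ell(\be)\le r$. On each fiber, $h_w$ is constant and equal to $h_\be$, because $\Sym$ is commutative, $h_0=1$, and $h_\be$ depends only on the multiset of parts. Summing $\hat{s}_r(w)$ over the fiber gives exactly $\hat{s}_r(\be)$ by definition. Compositions $\be$ with $\ell(\be)>r$ arise from no $w$, and the convention $\hat{s}_r(\be)=0$ covers them. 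Hence
\[
h_n[p_r]=\sum_{\be\in\Com(nr)}\hat{s}_r(\be)h_\be .
\]

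For the partition-indexed statement, I will group the compositions $\be$ by $\sort(\be)$. Using $h_\be=h_{\sort(\be)}$, this gives $h_n[p_r]=\sum_{\mu\in\Par(nr)}\bigl(\sum_{\sort(\be)=\mu}\hat{s}_r(\be)\bigr)h_\mu$. The inner sum is $\hat{s}_r(\mu)$ by definition. Since $\{h_\mu\}_{\mu\in\Par(nr)}$ is a basis of $\Sym(nr)$, this expansion is unique, so the coefficient of $h_\mu$ is $\hat{s}_r(\mu)$. This also agrees with the introduction's description of $\hat{s}_r(\mu)$ as a sum over lists $w$ whose multiset of positive parts equals the parts of $\mu$.

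The only point needing care, which I take to be the main obstacle though it is minor, is the orientation of the fibers: $\comrev$ reverses order while the definition of $\hat{s}_r(\be)$ does not. I would handle this by noting that the bijection $w\mapsto$ (nonzero entries in order) is a well-defined partition of $\WCom_r(nr)$ in either orientation, and the final partition-indexed sums coincide because reversal is a bijection on compositions preserving $\sort$. For the composition statement itself, I would state the regrouping using the left-to-right convention of the definition of $\hat{s}_r(\be)$, so that no reversal issue arises.
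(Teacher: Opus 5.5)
Your proposal is correct and follows essentially the same route as the paper: regroup the sum over $r$-weak compositions from Theorem~\ref{thm:h-exp-of-hnpr} by the composition obtained by deleting zeros (using $h_0=1$), then group those compositions by $\sort$ (using commutativity). Two points you make explicit are left implicit in the paper: the orientation issue between $\comrev$ and the left-to-right definition of $\hat{s}_r(\be)$, and the appeal to the $h$-basis for uniqueness of the partition-indexed coefficients.
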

\begin{proof}
As $h_0 = 1$, $h_w = h_{w'}$ holds for all $r$-weak compositions $w$ and $w'$ which give the same composition upon removal of zeros while preserving the order of entries. This gives the composition-indexed expansion $
h_n[p_r] = \sum_{\be\in \Com(nr)} \hat{s}_r(\be) h_\be.$
As multiplication is commutative and $h_0= 1$, $h_w = h_{w'}$ for $r$-weak compositions $w$ and $w'$ which have the same multisets of non-zero entries. We may represent the equivalence class of such $r$-weak compositions $w$ by a partition $\mu$ and all such $r$-weak compositions $w$ satisfy $h_\mu = h_w$. This shows $h_n[p_r] = \sum_{\mu\in \Par(nr)} \hat{s}_r(\mu) h_\mu$.
\end{proof}
\begin{example}\label{ex:hnpr-full}
We now find the coefficient of $h_{(4,1,1)}$ in the partition-indexed $h$-expansion of $h_2[p_3]$. As the length of $(4,1,1)$ is already $r = 3$, we do not pad it with any zeroes.
The rearrangements of $(4,1,1)$ are $w_1 =(4,1,1)$, $w_2 = (1,4,1)$ and $w_3 = (1,1,4)$. Adding $\delta_3$ to each gives $(4,2,3)$, $(1,5,3)$, and $(1,2,6)$, respectively. Upon reducing the entries modulo 3, we get the lists $(1,2,0)$, $(1,2,0)$ and $(1,2,0)$. For $i = 1,2,3$, $\hat{s}_3(w_i) = 1$, and so the coefficient of $h_{(4,1,1)}$ is 3.
\end{example}
\begin{example}
    The coefficient of $h_{(4,2)}$ in the $h$-expansion of $h_2[p_3]$ is $-3$ computed according to the following table:
    \begin{table}[H]
        \centering
        \begin{tabular}{c|c|c|c}
             $w$ & $w + \delta_3$ & $(w + \de_3)\%3$ & $\hat{s}_r((w+\delta_3)\% 3)$\\
             \hline
             $(4,2,0)$ & $(4,3,2)$ & $(1,0,2)$ & $(-1)^1 = -1$  \\
              $(4,0,2)$ & $(4,1,4)$ & $(1,0,1)$ & $0$  \\
              $(0,2,4)$ & $(0,3,6)$ & $(0,0,0)$ & $0$  \\
              $(0,4,2)$ & $(0,5,4)$ & $(0,2,1)$ & $(-1)^1 = -1$  \\
              $(2,0,4)$ & $(2,1,6)$ & $(2,1,0)$ & $(-1)^3 = -1$  \\$(2,4,0)$ & $(2,5,2)$ & $(2,2,2)$ & $0$ 
        \end{tabular}
    \end{table}
By adding up the signs, we see that the sum is $-3$ which is the desired coefficient.
\end{example}
\begin{example}\label{ex:hnpr-expansion}
For $n = 2, r = 3$, 
\[
h_2[p_3] = h_{2,2,2} - 3 h_{3,2,1} + 3 h_{3,3} + 3 h_{4,1,1} - 3 h_{4,2} - 3 h_{5,1} + 3 h_{6}.
\]
We also find
\begin{align*}
h_4[p_3] &= h_{4,4,4} - 3 h_{5,4,3} + 3 h_{5,5,2} + 3 h_{6,3,3} - 3 h_{6,4,2} - 3 h_{6,5,1}\\& + 3 h_{6,6} - 3 h_{7,3,2} + 6 h_{7,4,1} - 3 h_{7,5} + 3 h_{8,2,2} - 3 h_{8,3,1} - 3 h_{8,4} - 3 h_{9,2,1}\\& + 6 h_{9,3} + 3 h_{10,1,1} - 3 h_{10,2} - 3 h_{11,1} + 3 h_{12}.
\end{align*}
\end{example}
\begin{remark}\label{rem:multiple-of-r}
We observe that the coefficient computed in Example \ref{ex:hnpr-full} is $-3 = -r$. We also see that the coefficients in Example \ref{ex:hnpr-expansion} are divisible by $r$. We prove that the coefficients in the $h$-expansion of $h_n[p_r]$ are divisible by $r$ when $r$ is a prime in Section \ref{ssec:cyclic-invariance} by showing that if we cyclically shift the entries of $w$ to give $w'$, then $\hat{s}_r(w) = \hat{s}_r(w')$.
\end{remark}
\section{The $h^\ox$ and $e^\ox$ expansions of $H,E,E^+$}\label{sec:he-tensor-expa}
We now use Theorem \ref{thm:h-exp-of-hnpr} and Corollary \ref{cor:h-expansions} to prove results about bases of polysymmetric functions. We also provide combinatorial interpretations of the expansions in terms of objects we call polywrapping block tabloids.

\subsection{The transition matrices $\mcM_n(H,h^\otimes)$ and $\mcM_n(E,e^\otimes)$}\label{ssec:Hh-Ee}
We have already defined $\hat{s}_r$ for $r$-weak compositions, compositions, and partitions. Now we define $\hat{s}_r$ for ordered types and types. Recall an ordered type is an object $\de = 1^{\de|_1} 2^{\de|_2}\ldots$ where each $\de|_i$ is a composition and the size of $\delta$ is $|\delta| = \sum_i i\area(\delta|_i)$. A type is an ordered type where each $\de|_i$ is a partition. The function $\psort$ maps an ordered type $\de$ to a type $\psort(\de)$ by mapping each $\de|_i$ to $\sort(\de|_i)$. For an ordered type $\delta$, define $\hat{s}_r(\delta) = \prod_i \hat{s}_r(\de|_i)$. For a type $\tau$, define $\hat{s}_r(\tau) = \sum_{\delta} \hat{s}_r(\de)$, where the sum is over all ordered types $\de$ satisfying $\psort(\de) = \tau$.

Define an \textit{$r$-ordered type} $\rho$ to be an ordered type such that $\area(\rho|_i)$ is divisible by $r$.
%and $\ell(\rho|_i)\leq r$ for all $i\geq 1$. 
Denote the set of $r$-ordered types of size $n$ by $\XCom_r(n)$.

%29 Aug

\begin{prop}\label{prop:r-ordered-types-exp}
Let $d,r\geq 1$. Then
\[
H_{d^r} = \sum_{\substack{\rho\in \XCom_r(dr)\\ \ell(\rho|_i)\leq r, \forall i}} \hat{s}_r(\rho) h^\otimes_\rho.
\]
\end{prop}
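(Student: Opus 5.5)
Starting from Proposition \ref{prop:dr-expansions}(1), we write
\[
H_{d^r} = \sum_{\la\in \Par(d)} h_{m_1(\la)}[p_r]\otimes h_{m_2(\la)}[p_r]\otimes \cdots,
\]
and replace each tensor factor using the composition-indexed expansion of Corollary \ref{cor:h-expansions}. For $m_i(\la)=0$ the factor is $h_0[p_r]=1$, which we read as the expansion over the empty composition with $\hat{s}_r(\varnothing)=1$. For $m_i(\la)\geq 1$ we have
\[
h_{m_i(\la)}[p_r] = \sum_{\be\in \Com(r\,m_i(\la))} \hat{s}_r(\be) h_\be,
\]
where $\hat{s}_r(\be)=0$ whenever $\ell(\be)>r$. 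Expanding the tensor product multilinearly, each term is indexed by a choice of compositions $\be^{(i)}\in\Com(r\,m_i(\la))$, one per degree $i$. The term equals $\prod_i \hat{s}_r(\be^{(i)})\, h_{\be^{(1)}}\otimes h_{\be^{(2)}}\otimes\cdots$.

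Next we package these choices as ordered types. Set $\rho = 1^{\be^{(1)}}2^{\be^{(2)}}\cdots$. Then $\rho|_i=\be^{(i)}$ and $\area(\rho|_i) = r\,m_i(\la)$ is divisible by $r$. The size is $|\rho| = \sum_i i\, r\, m_i(\la) = r|\la| = dr$, so $\rho\in\XCom_r(dr)$. The tensor is $h^\ox_\rho$, and the coefficient is $\prod_i\hat{s}_r(\rho|_i)=\hat{s}_r(\rho)$ by the definition of $\hat{s}_r$ on ordered types. Terms with some $\ell(\rho|_i)>r$ have zero coefficient, so we may restrict to $\ell(\rho|_i)\le r$ for all $i$ without changing the sum.

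It remains to check that the map $(\la,(\be^{(i)})_i)\mapsto\rho$ is a bijection onto $\XCom_r(dr)$. The inverse sends $\rho\in\XCom_r(dr)$ to the partition $\la$ with $m_i(\la)=\area(\rho|_i)/r$. This is a partition of $d$ because $\sum_i i\,\area(\rho|_i)/r = |\rho|/r = d$. The compositions are recovered as $\be^{(i)}=\rho|_i$, with $\rho|_i=\varnothing$ exactly when $m_i(\la)=0$. Both maps are clearly mutually inverse.

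The main point needing care is that distinct $\rho$ may give the same $h^\ox_\rho$ (whenever they have the same $\psort$). The statement is a sum over ordered types rather than a collected expansion, so no combining of terms is needed. The only other subtlety is the bookkeeping of empty factors via $h_0[p_r]=1$ and $\hat{s}_r(\varnothing)=1$.
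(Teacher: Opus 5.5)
Your proposal is correct and follows the paper's proof essentially step for step. You substitute Corollary \ref{cor:h-expansions} into Proposition \ref{prop:dr-expansions}(1) and expand multilinearly. You then identify each term with the $r$-ordered type $\rho$ having $\rho|_i=\be^{(i)}$, recover $\la$ via $m_i(\la)=\area(\rho|_i)/r$, and drop the terms with $\ell(\rho|_i)>r$ because their coefficients vanish. Your explicit treatment of the empty factors via $h_0[p_r]=1$ and $\hat{s}_r(\varnothing)=\hat{s}_r(\vec{0}_r)=1$ is a small bookkeeping detail that the paper leaves implicit, and it is consistent with the paper's definitions.
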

\begin{proof}
From part (1) of Proposition \ref{prop:dr-expansions}, 
\[
H_{d^r} = \sum\limits_{\la\in \Par(d)} h_{m_1(\la)}[p_r] \otimes h_{m_2(\la)}[p_r] \otimes \cdots.
\]

Using the composition-indexed expansion of $h_n[p_r]$ from Corollary \ref{cor:h-expansions},
\[
h_n[p_r] =\sum_{\beta\in \Com(nr)} \hat{s}_r(\be) h_\be,
\]
we get
\begin{align*}
 H_{d^r} &= \sum\limits_{\la\in \Par(d)} \left(\sum_{\be^{(1)}\in \Com(m_1(\la)r)} \hat{s}_r(\be^{(1)}) h_{\be^{(1)}} \right) \otimes \left(\sum_{\be^{(2)}\in \Com(m_2(\la)r))} \hat{s}_r(\be^{(2)}) h_{\be^{(2)}} \right)\otimes\cdots\\ &\otimes\left(\sum_{\be^{(k)}\in \Com(m_k(\la)r)} \hat{s}_r(\be^{(k)}) h_{\be^{(k)}} \right)\otimes \cdots
\end{align*}
This simplifies to
\begin{align*}
H_{d^r} = & \sum\limits_{\la\in \Par(d)} \sum_{\be^{(1)}\in \Com(m_1(\la)r)} \sum_{\be^{(2)}\in \Com(m_2(\la)r)}\cdots\\ &\ldots \sum_{\be^{(k)}\in \Com(m_k(\la)r)}\ldots \left(\prod_i \hat{s}_r(\be^{(i)})\right) h_{\be^{(1)}}\otimes h_{\be^{(2)}}\otimes\cdots \: (*).
\end{align*}
With each term of the above expansion, we associate the $r$-ordered type $\rho$ that satisfies $\rho|_i = \be^{(i)}$. On the other hand, assume that $\rho$ is an $r$-ordered type of $dr$. This means that there exist non-negative integers $(a_i)_{\geq 1}$ such that $\area(\rho|_i) = a_i r$. Note $|\rho| = dr = \sum_{i\geq 1} i a_ir$. Thus $\rho$ is described by a solution to $\sum_{i\geq 1} i a_i = d$ which is exactly some partition $\la$ of $d$ such that $a_i = m_i(\la)$ for all $i$. So, $\rho$ indexes the term on the right for which $\be^{(i)} = \rho|_i$. The condition $\ell(\rho|_i)\leq r$ for all $i\geq 1$ follows from the observation that $\hat{s}_r(\rho) = 0$ if there exists a $k$ such that $\ell(\rho|k) > r$. This shows that the sum $H_{d^r} = \sum_{\rho\in \XCom_r(dr)} \hat{s}_r(\rho) h^\otimes_\rho$ where $\ell(\rho|_i) \leq r$ for all $i$ on the right hand side of $(*)$. 
\end{proof}
\begin{remark}
We may express the result of Proposition \ref{prop:r-ordered-types-exp} as sum over all ordered types, that is,
\[
H_{d^r} = \sum_{\de\in \XCom(dr)} \hat{s}_r(\de) h^\otimes_\de.
\]
First we may drop the condition that $\ell(\de_i)\leq r$ for all $i$ as $\hat{s}_r(\alpha) = 0$ for any composition with length greater than $\alpha$.
In Proposition \ref{prop:must-be-multiple-of-r} (proved later), we show that $\hat{s}_r(\alpha)$ is zero for any composition $\al$ for which $\area(\al)$ is not a multiple of $r$. For an ordered type $\delta$, $\hat{s}_r(\de) = \prod_{i\geq 1}\hat{s}_r(\delta|_i)$. If $\de$ is not an $r$-ordered type, then for some $k$, $\de|_k$ must be such that $\area(\de|_k)$ is not divisible by $r$. This means $\hat{s}_r(\de|_k) = 0$, which implies $\hat{s}_r(\de) = 0$.
\end{remark}

We now interpret the expansion in Proposition \ref{prop:r-ordered-types-exp} combinatorially using polywrapping block tabloids. Define a \textit{polywrapping $D$ of block $d^r$} of ordered content $\de\in \XCom_r(dr)$ as follows:
\begin{enumerate}
\item Express $\bdg(d^r)$ as a union of block diagrams of $(k\area(\delta|_k)/r)^r$ for $k = 1,2,\ldots$. We arrange the block diagrams in ascending order of $i$ from top to bottom.
\item For each $k$, we construct a $k$-wrapping $D_k$ of ordered content $\de|_k$ of $(k\area(\delta|_k)/r)^r$. This construction implies that rectangles of larger heights appear below rectangles of smaller heights in the polywrapping of $d^r$.
\item Define $\psgn(D) = \prod_{k\geq 1} \hat{s}_r(\delta|_k) = \hat{s}_r(\de)$.
\end{enumerate}
For a polywrapping $D$, define $\pocon(D)$ to be ordered type that is its ordered content.
\begin{example}\label{ex:polywrapping-block}
The following object is a polywrapping of $11^5$ with ordered content $1^{3,4,3}2^{8,3,3,1}3^{2,1,2}$:
\begin{center}
$D = $ \raisebox{40.0pt}{\begin{tikzpicture}[scale =0.300,baseline=(current bounding box.north)]
\draw[step = 1.0, gray, thin] (0,0) grid (5,-11);
\draw (0.250,-0.250) -- (0.250,-0.750);
\draw (0.250,-0.250) -- (1.25,-0.250);
\draw (0.250,-0.750) -- (1.25,-0.750);
\draw (1.25,-0.250) -- (2.25,-0.250);
\draw (1.25,-0.750) -- (2.25,-0.750);
\draw (2.25,-0.250) -- (2.75,-0.250);
\draw (2.25,-0.750) -- (2.75,-0.750);
\draw (2.75,-0.250) -- (2.75,-0.750);
\draw (3.25,-0.250) -- (3.25,-0.750);
\draw (3.25,-0.250) -- (4.25,-0.250);
\draw (3.25,-0.750) -- (4.25,-0.750);
\draw (4.25,-0.250) -- (5.25,-0.250);
\draw (4.25,-0.750) -- (5.25,-0.750);
\draw (-0.25,-1.25) -- (0.25,-1.25);
\draw (-0.25,-1.75) -- (0.25,-1.75);
\draw (0.250,-1.25) -- (1.25,-1.25);
\draw (0.250,-1.75) -- (1.25,-1.75);
\draw (1.25,-1.25) -- (1.75,-1.25);
\draw (1.25,-1.75) -- (1.75,-1.75);
\draw (1.75,-1.25) -- (1.75,-1.75);
\draw (2.25,-1.25) -- (2.25,-1.75);
\draw (2.25,-1.25) -- (3.25,-1.25);
\draw (2.25,-1.75) -- (3.25,-1.75);
\draw (3.25,-1.25) -- (4.25,-1.25);
\draw (3.25,-1.75) -- (4.25,-1.75);
\draw (4.25,-1.25) -- (4.75,-1.25);
\draw (4.25,-1.75) -- (4.75,-1.75);
\draw (4.75,-1.25) -- (4.75,-1.75);
\draw (0.250,-2.25) -- (0.250,-3.75);
\draw (0.250,-2.25) -- (1.25,-2.25);
\draw (0.250,-3.75) -- (1.25,-3.75);
\draw (1.25,-2.25) -- (2.25,-2.25);
\draw (1.25,-3.75) -- (2.25,-3.75);
\draw (2.25,-2.25) -- (3.25,-2.25);
\draw (2.25,-3.75) -- (3.25,-3.75);
\draw (3.25,-2.25) -- (4.25,-2.25);
\draw (3.25,-3.75) -- (4.25,-3.75);
\draw (4.25,-2.25) -- (5.25,-2.25);
\draw (4.25,-3.75) -- (5.25,-3.75);
\draw (-0.25,-4.25) -- (0.25,-4.25);
\draw (-0.25,-5.75) -- (0.25,-5.75);
\draw (0.250,-4.25) -- (1.25,-4.25);
\draw (0.250,-5.75) -- (1.25,-5.75);
\draw (1.25,-4.25) -- (2.25,-4.25);
\draw (1.25,-5.75) -- (2.25,-5.75);
\draw (2.25,-4.25) -- (2.75,-4.25);
\draw (2.25,-5.75) -- (2.75,-5.75);
\draw (2.75,-4.25) -- (2.75,-5.75);
\draw (3.25,-4.25) -- (3.25,-5.75);
\draw (3.25,-4.25) -- (4.25,-4.25);
\draw (3.25,-5.75) -- (4.25,-5.75);
\draw (4.25,-4.25) -- (5.25,-4.25);
\draw (4.25,-5.75) -- (5.25,-5.75);
\draw (-0.25,-6.25) -- (0.25,-6.25);
\draw (-0.25,-7.75) -- (0.25,-7.75);
\draw (0.250,-6.25) -- (0.750,-6.25);
\draw (0.250,-7.75) -- (0.750,-7.75);
\draw (0.750,-6.25) -- (0.750,-7.75);
\draw (1.25,-6.25) -- (1.25,-7.75);
\draw (1.25,-6.25) -- (2.25,-6.25);
\draw (1.25,-7.75) -- (2.25,-7.75);
\draw (2.25,-6.25) -- (3.25,-6.25);
\draw (2.25,-7.75) -- (3.25,-7.75);
\draw (3.25,-6.25) -- (3.75,-6.25);
\draw (3.25,-7.75) -- (3.75,-7.75);
\draw (3.75,-6.25) -- (3.75,-7.75);
\draw (4.25,-6.25) -- (4.25,-7.75);
\draw (4.25,-6.25) -- (4.75,-6.25);
\draw (4.25,-7.75) -- (4.75,-7.75);
\draw (4.75,-6.25) -- (4.75,-7.75);
\draw (0.250,-8.25) -- (0.250,-10.8);
\draw (0.250,-8.25) -- (1.25,-8.25);
\draw (0.250,-10.8) -- (1.25,-10.8);
\draw (1.25,-8.25) -- (1.75,-8.25);
\draw (1.25,-10.8) -- (1.75,-10.8);
\draw (1.75,-8.25) -- (1.75,-10.8);
\draw (2.25,-8.25) -- (2.25,-10.8);
\draw (2.25,-8.25) -- (2.75,-8.25);
\draw (2.25,-10.8) -- (2.75,-10.8);
\draw (2.75,-8.25) -- (2.75,-10.8);
\draw (3.25,-8.25) -- (3.25,-10.8);
\draw (3.25,-8.25) -- (4.25,-8.25);
\draw (3.25,-10.8) -- (4.25,-10.8);
\draw (4.25,-8.25) -- (4.75,-8.25);
\draw (4.25,-10.8) -- (4.75,-10.8);
\draw (4.75,-8.25) -- (4.75,-10.8);
\end{tikzpicture}}\end{center}
We construct $D$ as follows. Express the block diagram of $11^5$ as a union of block diagrams of $(1\cdot 10/5)^5$, $(2\cdot 15/5)^5$ and $(3\cdot 5/5)^5$. Then the polywrapping is obtained by independently constructing the 1-wrapping of $2^5$, the 2-wrapping of $6^5$ and the 3-wrapping of $3^5$. We compute $\hat{s}_5((3,4,3)) = 2$, $\hat{s}_5((8,3,3,1)) = -1$, and $\hat{s}_5((2,1,2)) = 2$. This gives $\psgn(D) = 2 \cdot -1 \cdot 2 = -4$.
\end{example}

We call a polywrapping \textit{$r$-bounded} if the number of rectangles of each height is at most $r$. In Example \ref{ex:polywrapping-block}, we have shown a 5-bounded polywrapping.
\begin{remark}
We have defined polywrappings of $d^r$ and $r$-bounded polywrappings of $d^r$ separately even though at this point only use the latter definition. The former notion of polywrappings (without any bound on number of rectangles) plays an important role in Section \ref{ssec:He-Eh}.
\end{remark}
\begin{prop}\label{prop:H-dr-in-h-combo}
For integers $d,r \geq 1$,
\[
H_{d^r} = \sum_{D} \psgn(D) h^\otimes_{\pocon(D)}.
\]
where the sum is over all $r$-bounded polywrappings $D$ of the block $d^r$.
\end{prop}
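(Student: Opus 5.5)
The plan is to deduce Proposition \ref{prop:H-dr-in-h-combo} directly from Proposition \ref{prop:r-ordered-types-exp}. We exhibit a bijection between $r$-bounded polywrappings $D$ of $d^r$ and the indexing set $\{\rho\in\XCom_r(dr): \ell(\rho|_i)\leq r \text{ for all } i\}$. Under this bijection $D\mapsto \pocon(D)$, the weight $\psgn(D)$ will equal $\hat{s}_r(\rho)$ by definition. Once the bijection is established, the two sums agree term by term.

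\emph{Step 1: from a polywrapping to an $r$-ordered type.} Given a polywrapping $D$ of $d^r$ with ordered content $\de$, the definition already requires $\de\in\XCom_r(dr)$. By construction, $D$ restricted to heights $k$ is a $k$-wrapping $D_k$ of the block $(k\area(\de|_k)/r)^r$ with ordered content $k^{\de|_k}$. The number of rectangles of height $k$ is $\ell(\de|_k)$, so $r$-boundedness is exactly the condition $\ell(\de|_k)\leq r$ for all $k$. We also set $\psgn(D)=\prod_k\hat{s}_r(\de|_k)=\hat{s}_r(\de)$. Hence $\psgn(D)h^\otimes_{\pocon(D)}=\hat{s}_r(\de)h^\otimes_\de$.

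\emph{Step 2: from an $r$-ordered type back to a polywrapping.} Conversely, let $\rho\in\XCom_r(dr)$ with $\ell(\rho|_k)\leq r$. Write $\area(\rho|_k)=a_kr$. Then $\sum_k k a_k r=dr$, so the heights $k a_k$ of the blocks $(k\area(\rho|_k)/r)^r=(ka_k)^r$ sum to $d$. Stacking these blocks in increasing order of $k$ therefore fills $\bdg(d^r)$ exactly.

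On each stacked block we need a $k$-wrapping of ordered content $k^{\rho|_k}$. We use the definition of $k$-wrapping from Section \ref{sec:P-he-wrapping}:
\begin{itemize}
\item $k$ divides the height $ka_k$;
\item $\rho|_k$ is a composition of $(ka_k)r/k=a_kr$, as required.
\end{itemize}
That definition produces a unique $k$-wrapping with this content, so we obtain a unique polywrapping with $\pocon=\rho$. Blocks with $a_k=0$ are empty and contribute the factor $\hat{s}_r(\varnothing)=1$.

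\emph{Step 3: the two maps are inverse.} The decomposition into sub-blocks in a polywrapping is forced by the ordered content. Each $k$-wrapping is determined by its content composition through the cumulative-sum recipe. Hence the two maps are mutually inverse.

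\emph{Main obstacle.} The only delicate point is checking that the block heights and contents line up, namely that $k\area(\de|_k)/r$ is an integer multiple of $k$ and that the composition $\de|_k$ has the size $dr/k$-analogue demanded by the $k$-wrapping definition. The $r$-ordered condition $r\mid\area(\de|_k)$ handles both. With the bijection in hand, substituting it into Proposition \ref{prop:r-ordered-types-exp} gives the claimed formula.
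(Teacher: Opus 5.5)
Your proposal is correct and follows the paper's own argument. The paper likewise matches each $r$-ordered type $\rho$ with $\ell(\rho|_k)\le r$ to the unique $r$-bounded polywrapping of $d^r$ with ordered content $\rho$, and conversely reads $\pocon(D)$ off the lengths of the rectangles of each height. It then notes $\psgn(D)=\hat{s}_r(\rho)$ and substitutes into Proposition~\ref{prop:r-ordered-types-exp}; your checks that the block heights $k\area(\rho|_k)/r$ sum to $d$ and that empty heights contribute $\hat{s}_r(\varnothing)=1$ only make explicit details the paper leaves implicit.
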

\begin{proof}
For each $r$-ordered type $\rho$ with $\ell(\rho_i)\leq i$ for all $i$, we construct a polywrapping $D$ of block $d^r$ with ordered content $\rho$. From our definitions, it follows that $\psgn(D) = \hat{s}_r(\rho)$ and that $D$ is $r$-bounded as $\ell(\rho_i)\leq i$ for all $i$. So we may rewrite the right hand side of Proposition \ref{prop:r-ordered-types-exp} as the sum $\sum_{D} \psgn(D) h^\ox_{\pocon(D)}$ where the sum is over all $r$-bounded polywrappings $D$ arising from $r$-ordered types $\rho$ as discussed. To complete the proof, we need to show that each $r$-bounded polywrapping of $d^r$ produces one term on the right hand side of Proposition \ref{prop:r-ordered-types-exp}. 

Now assume we have an $r$-bounded polywrapping $D$ of $d^r$ which can be expressed as a union of $k$-wrappings. We read the rectangles from left to right. The list of lengths of rectangle of a given height $k\geq 1$ read in this prescribed order form a composition, which we call $\be^{(k)}$. Then, the ordered content of $D$ is the ordered type $\de= 1^{\be^{(1)}} 2^{\be^{(2)}}\ldots$. Note that as the union of rectangles of a given height cover an array of width $r$, each composition $\be^{(k)}$ has size divisible by $r$. As the number of rectangles in $D_k$ is equal to the length of the composition $\be^{(k)}$, the length of $\be^{(k)}$ is at most $r$.  Thus, $\de$ is an $r$-ordered type satisfying $\ell(\rho|_k)\leq r$ for all $k$. So the polywrapping $D$ gives rise to the term $h^\ox_\de$. It follows that $\psgn(D) = \hat{s}_r(\de)$.
\end{proof}

Let $\sigma$ and $\tau$ be types of a positive integer $n$. Define a \textit{bounded polywrapping block tabloid} $T$ of shape $\si = d_1^{r_1}d_2^{r_2}\ldots d_k^{r_k}$ and content $\tau = e_1^{s_1}e_2^{s_2}\ldots e_l^{s_l}$ as a list $(D^{(1)},D^{(2)},\ldots, D^{(k)})$ such that each $D^{(i)}$ is a $r_i$-bounded polywrapping of $d_i^{r_i}$ of content $\de^{(i)}$ and the multiset union $
\bigcup_{i=1}^k\psort(\ocon(D^{(i)}))$ equals $\tau$.

Define $\psgn(T) = \prod_{i=1}^k \psgn(D^{(k)}) = \prod_{i=1}^k \hat{s}_r(\de^{(i)})$.

Informally, we use rectangles of heights $e_i$ and length $s_i$ for $i = 1,2,\ldots, l$ respectively to construct simultaneous $r_i$-bounded polywrappings $D^{(i)}$ of blocks $d_i^{r_i}$ for all $i = 1,2,\ldots, k$.

\begin{example}\label{ex:pw-block-tabloid}
The following  is a bounded polywrapping block tabloid of shape $5^4 5^4 3^3$ with content $3^{3,1} 2^{3,3,2} 1^{6,3,3,3,2,2,2}$.
\begin{center}
$T = $\raisebox{16.0pt}{\begin{tikzpicture}[scale =0.400,baseline=(current bounding box.north)]
\draw[step = 1.0, gray, thin] (0,0) grid (4,-5);
\draw (0.250,-0.250) -- (0.250,-0.750);
\draw (0.250,-0.250) -- (1.25,-0.250);
\draw (0.250,-0.750) -- (1.25,-0.750);
\draw (1.25,-0.250) -- (2.25,-0.250);
\draw (1.25,-0.750) -- (2.25,-0.750);
\draw (2.25,-0.250) -- (2.75,-0.250);
\draw (2.25,-0.750) -- (2.75,-0.750);
\draw (2.75,-0.250) -- (2.75,-0.750);
\draw (3.25,-0.250) -- (3.25,-0.750);
\draw (3.25,-0.250) -- (4.25,-0.250);
\draw (3.25,-0.750) -- (4.25,-0.750);
\draw (-0.25,-1.25) -- (0.25,-1.25);
\draw (-0.25,-1.75) -- (0.25,-1.75);
\draw (0.250,-1.25) -- (1.25,-1.25);
\draw (0.250,-1.75) -- (1.25,-1.75);
\draw (1.25,-1.25) -- (1.75,-1.25);
\draw (1.25,-1.75) -- (1.75,-1.75);
\draw (1.75,-1.25) -- (1.75,-1.75);
\draw (2.25,-1.25) -- (2.25,-1.75);
\draw (2.25,-1.25) -- (3.25,-1.25);
\draw (2.25,-1.75) -- (3.25,-1.75);
\draw (3.25,-1.25) -- (3.75,-1.25);
\draw (3.25,-1.75) -- (3.75,-1.75);
\draw (3.75,-1.25) -- (3.75,-1.75);
\draw (0.250,-2.25) -- (0.250,-4.75);
\draw (0.250,-2.25) -- (1.25,-2.25);
\draw (0.250,-4.75) -- (1.25,-4.75);
\draw (1.25,-2.25) -- (2.25,-2.25);
\draw (1.25,-4.75) -- (2.25,-4.75);
\draw (2.25,-2.25) -- (2.75,-2.25);
\draw (2.25,-4.75) -- (2.75,-4.75);
\draw (2.75,-2.25) -- (2.75,-4.75);
\draw (3.25,-2.25) -- (3.25,-4.75);
\draw (3.25,-2.25) -- (3.75,-2.25);
\draw (3.25,-4.75) -- (3.75,-4.75);
\draw (3.75,-2.25) -- (3.75,-4.75);
\end{tikzpicture}} \quad \raisebox{16.0pt}{\begin{tikzpicture}[scale =0.400,baseline=(current bounding box.north)]
\draw[step = 1.0, gray, thin] (0,0) grid (4,-5);
\draw (0.250,-0.250) -- (0.250,-0.750);
\draw (0.250,-0.250) -- (1.25,-0.250);
\draw (0.250,-0.750) -- (1.25,-0.750);
\draw (1.25,-0.250) -- (1.75,-0.250);
\draw (1.25,-0.750) -- (1.75,-0.750);
\draw (1.75,-0.250) -- (1.75,-0.750);
\draw (2.25,-0.250) -- (2.25,-0.750);
\draw (2.25,-0.250) -- (3.25,-0.250);
\draw (2.25,-0.750) -- (3.25,-0.750);
\draw (3.25,-0.250) -- (3.75,-0.250);
\draw (3.25,-0.750) -- (3.75,-0.750);
\draw (3.75,-0.250) -- (3.75,-0.750);
\draw (0.250,-1.25) -- (0.250,-2.75);
\draw (0.250,-1.25) -- (1.25,-1.25);
\draw (0.250,-2.75) -- (1.25,-2.75);
\draw (1.25,-1.25) -- (2.25,-1.25);
\draw (1.25,-2.75) -- (2.25,-2.75);
\draw (2.25,-1.25) -- (2.75,-1.25);
\draw (2.25,-2.75) -- (2.75,-2.75);
\draw (2.75,-1.25) -- (2.75,-2.75);
\draw (3.25,-1.25) -- (3.25,-2.75);
\draw (3.25,-1.25) -- (4.25,-1.25);
\draw (3.25,-2.75) -- (4.25,-2.75);
\draw (-0.25,-3.25) -- (0.25,-3.25);
\draw (-0.25,-4.75) -- (0.25,-4.75);
\draw (0.250,-3.25) -- (0.750,-3.25);
\draw (0.250,-4.75) -- (0.750,-4.75);
\draw (0.750,-3.25) -- (0.750,-4.75);
\draw (1.25,-3.25) -- (1.25,-4.75);
\draw (1.25,-3.25) -- (2.25,-3.25);
\draw (1.25,-4.75) -- (2.25,-4.75);
\draw (2.25,-3.25) -- (3.25,-3.25);
\draw (2.25,-4.75) -- (3.25,-4.75);
\draw (3.25,-3.25) -- (3.75,-3.25);
\draw (3.25,-4.75) -- (3.75,-4.75);
\draw (3.75,-3.25) -- (3.75,-4.75);
\end{tikzpicture}}\quad \raisebox{16.0pt}{\begin{tikzpicture}[scale =0.400,baseline=(current bounding box.north)]
\draw[step = 1.0, gray, thin] (0,0) grid (3,-3);
\draw (0.250,-0.250) -- (0.250,-0.750);
\draw (0.250,-0.250) -- (1.25,-0.250);
\draw (0.250,-0.750) -- (1.25,-0.750);
\draw (1.25,-0.250) -- (2.25,-0.250);
\draw (1.25,-0.750) -- (2.25,-0.750);
\draw (2.25,-0.250) -- (2.75,-0.250);
\draw (2.25,-0.750) -- (2.75,-0.750);
\draw (2.75,-0.250) -- (2.75,-0.750);
\draw (0.250,-1.25) -- (0.250,-1.75);
\draw (0.250,-1.25) -- (1.25,-1.25);
\draw (0.250,-1.75) -- (1.25,-1.75);
\draw (1.25,-1.25) -- (2.25,-1.25);
\draw (1.25,-1.75) -- (2.25,-1.75);
\draw (2.25,-1.25) -- (3.25,-1.25);
\draw (2.25,-1.75) -- (3.25,-1.75);
\draw (-0.25,-2.25) -- (0.25,-2.25);
\draw (-0.25,-2.75) -- (0.25,-2.75);
\draw (0.250,-2.25) -- (1.25,-2.25);
\draw (0.250,-2.75) -- (1.25,-2.75);
\draw (1.25,-2.25) -- (2.25,-2.25);
\draw (1.25,-2.75) -- (2.25,-2.75);
\draw (2.25,-2.25) -- (2.75,-2.25);
\draw (2.25,-2.75) -- (2.75,-2.75);
\draw (2.75,-2.25) -- (2.75,-2.75);
\end{tikzpicture}}
\end{center}
We compute $\psgn(T) = (1\cdot -1)(-2\cdot 1)(3) = 6$.
\end{example}

Define $\pwrap(d^r)$ as the set of all $r$-bounded polywrappings of the block $d^r$. We may write Proposition \ref{prop:H-dr-in-h-combo} as $H_{d^r} = \sum_{D\in \pwrap(d^r)} \psgn(D) h^\otimes_{\pocon(D)}$.
\begin{theorem}\label{thm:H-in-htensor}
Let $\sigma$ and $\tau$ be types of an integer $n\geq 1$. The coefficient of $h^\otimes_\tau$ in the $h^\otimes$-expansion of $H_\sigma$ is $\sum_T \psgn(T)$ where the sum is over all bounded polywrapping block tabloids $T$ of shape $\si$ and content $\tau$.
\end{theorem}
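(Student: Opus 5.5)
The plan mirrors the proof of Theorem \ref{thm:P-in-htensor}, with Proposition \ref{prop:H-dr-in-h-combo} playing the role of Proposition \ref{prop:P-dr-comb}. Write $\si = d_1^{r_1} d_2^{r_2}\cdots d_k^{r_k}$ in block notation. Since $H_\si = H_{d_1^{r_1}} H_{d_2^{r_2}}\cdots H_{d_k^{r_k}}$, I will apply Proposition \ref{prop:H-dr-in-h-combo} to each factor, writing
\[
H_{d_i^{r_i}} = \sum_{D^{(i)}\in \pwrap(d_i^{r_i})} \psgn(D^{(i)})\, h^\otimes_{\pocon(D^{(i)})}.
\]
Then I will expand the product by distributivity into a sum over lists $(D^{(1)},\ldots,D^{(k)})$ with $D^{(i)}$ an $r_i$-bounded polywrapping of $d_i^{r_i}$.

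Next, I will use the multiplicativity of the pure-tensor basis already invoked in the proof of Theorem \ref{thm:P-in-htensor}: $h^\ox_\de h^\ox_\rho = h^\ox_{\de\cup\rho}$, where $\de\cup\rho$ is the multiset union of blocks. This holds because the product in $\psym$ is computed factor-by-factor, and $h_\be h_\ga = h_{\be\cup\ga}$ in each tensor factor. So the term indexed by $(D^{(1)},\ldots,D^{(k)})$ equals
\[
\Big(\prod_{i=1}^k \psgn(D^{(i)})\Big)\, h^\ox_{\tau'}, \qquad \tau' = \bigcup_{i=1}^k \psort(\pocon(D^{(i)})).
\]
Here $\tau'$ is a type of $n$, since the sizes add to $\sum_i d_i r_i = n$.

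Finally, I will collect terms with $\tau' = \tau$. By definition, a list $(D^{(1)},\ldots,D^{(k)})$ whose multiset union of sorted contents is $\tau$ is exactly a bounded polywrapping block tabloid $T$ of shape $\si$ and content $\tau$. Its weight $\prod_i \psgn(D^{(i)})$ is by definition $\psgn(T)$. The correspondence is a bijection because the components are chosen independently, one per block of $\si$.

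The only points needing care are bookkeeping. First, repeated blocks in $\si$ are treated as distinct positions in the list, which matches the fact that $H_\si$ has one factor per block. Second, the bound $r_i$ must be used for block $i$; the displayed definition of $\psgn(T)$ has typos ($D^{(k)}$ for $D^{(i)}$, and $\hat{s}_r$ should be $\hat{s}_{r_i}$), and I will read it this way. Third, the $h^\ox_\tau$ for $\tau\in\Typ(n)$ form a basis, so the collected coefficient is the coefficient. I expect none of these steps to be a real obstacle.
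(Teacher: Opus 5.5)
Your proposal is correct and follows the paper's proof essentially verbatim. Both expand $H_{d_1^{r_1}}\cdots H_{d_k^{r_k}}$ by applying Proposition \ref{prop:H-dr-in-h-combo} to each block, merge pure tensors via $h^\ox_\de h^\ox_\rho = h^\ox_{\de\cup\rho}$, and identify the index lists $(D^{(1)},\ldots,D^{(k)})$ with bounded polywrapping block tabloids of shape $\si$ and content $\tau$, with your reading of the typos in $\psgn(T)$ (factor $D^{(i)}$, bound $r_i$ for the $i$th block) being the intended one.
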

\begin{proof}
From Proposition \ref{prop:H-dr-in-h-combo},
\begin{align*}
H_{d_1^{r_1}} H_{d_2^{r_2}}\ldots H_{d_k^{r_k}} = 
&\left( \sum_{D^{(1)}\in \pwrap(d_1^{r_1})} \psgn(D^{(1)}) h^\otimes_{\pocon(D^{(1)})}\right)\\
&\left( \sum_{D^{(2)}\in \pwrap(d_2^{r_2})} \psgn(D^{(2)}) h^\otimes_{\pocon(D^{(2)})}\right)\ldots\\
&\left( \sum_{D^{(k)}\in \pwrap(d_k^{r_k})} \psgn(D^{(k)}) h^\otimes_{\pocon(D^{(k)})}\right).
\end{align*}
For a fixed $\tau$, the coefficient of $h^\ox_\tau$ is $\sum_{\vec{D}} \prod_{i=1}^k \psgn(D^{(i)})$, where the sum over all lists $\vec{D} = (D^{(1)}, D^{(2)}, \ldots, D^{(k)})$ satisfying $\bigcup_{i=1}^k\psort(\ocon(D^{(i)})) = \tau$ as multisets. As each $D^{(i)}$ is an $r_i$-bounded polywrapping of $d_i^{r_i}$, the list $(D^{(1)}, D^{(2)}, \ldots, D^{(k)})$ is a bounded polywrapping block tabloid of shape $\si$ and content $\tau$. So the sum in the coefficient is indexed by bounded polywrapping block tabloids $T$ of shape $\si$ and content $\tau$, and the summands are $\psgn(T)$. On the other hand, if we are given a bounded polywrapping block tabloid $T$ of shape $\si$ and content $\tau$, then we may define $D^{(i)}$ as the polywrapping of the block $d_i^{r_i}$, and thus each bounded polywrapping block tabloid appears as an index of a term on the right hand side of the above product. This shows that the coefficient of $h^\otimes_\tau$ is $\sum_T \psgn(T)$ where the sum runs over the set of all bounded polywrapping block tabloids $T$.
\end{proof}

Recall that for a type $\tau$, $\area(\tau) = \sum_{i\geq 1} \area(\tau|_i)$. Also recall Corollary \ref{cor:Omega-of-htensor} which states that $\Omega(h^\otimes_\tau) = (-1)^{\area(\tau)} e^\otimes_{\tau}$.
\begin{prop}\label{prop:Edr-in-etensor}
Let $d\geq 0$ and $r\geq 1$. Then
\[
E_{d^r} = \sum_{\substack{\rho\in \XCom_r(dr)\\ \ell(\rho|_i)\leq r, \forall i}} (-1)^{\area(\rho)}\hat{s}_r(\rho) e^\otimes_\rho.
\]
\end{prop}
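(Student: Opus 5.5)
The plan is to apply the involution $\Omega$ to the $h^\otimes$-expansion of $H_{d^r}$ in Proposition \ref{prop:r-ordered-types-exp}. This parallels the way Theorem \ref{thm:P-in-etensor} was derived from Theorem \ref{thm:P-in-htensor}.

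The first step is the left-hand side. By definition, $\Omega$ is an algebra isomorphism with $\Omega(H_\de)=E_\de$ for every ordered type $\de$. Specializing to the block $\de=d^r$ gives $\Omega(H_{d^r})=E_{d^r}$. For $d=0$ both sides of the claimed identity equal $1$, since the only $r$-ordered type of size $0$ is empty. We may therefore assume $d\geq 1$.

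The second step is the right-hand side. Since $\Omega$ is linear, it can be applied term by term to
\[
H_{d^r}=\sum_{\substack{\rho\in \XCom_r(dr)\\ \ell(\rho|_i)\leq r,\ \forall i}} \hat{s}_r(\rho)\, h^\otimes_\rho .
\]
Each $\rho$ is an ordered type, so Corollary \ref{cor:Omega-of-htensor} applies directly to the ordered-type-indexed pure tensor and gives $\Omega(h^\otimes_\rho)=(-1)^{\area(\rho)}e^\otimes_\rho$. Substituting this term by term yields exactly the displayed sum with coefficients $(-1)^{\area(\rho)}\hat{s}_r(\rho)$.

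The only point needing care is the identity $h^\otimes_\rho=h^\otimes_{\psort(\rho)}$. I would note that $\area(\rho)=\area(\psort(\rho))$, so the sign is compatible whether we work with ordered types or types, and no recollection of terms is affected.

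I expect no real obstacle. The only subtle point is justifying that Corollary \ref{cor:Omega-of-htensor} holds for ordered types and not just types. Its proof via Proposition \ref{prop:Omega-on-plethysm} already handles compositions $\de|_i$ in each tensor factor, so this is immediate.
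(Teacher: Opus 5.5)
Your proposal is correct and is the paper's own argument: apply $\Omega$ to the $h^\otimes$-expansion of $H_{d^r}$ in Proposition~\ref{prop:r-ordered-types-exp}, use $\Omega(H_{d^r})=E_{d^r}$, and convert each $h^\otimes_\rho$ term by term with Corollary~\ref{cor:Omega-of-htensor}. Your separate check of the case $d=0$ and your remark about ordered types versus types are harmless extras that the paper leaves implicit.
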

\begin{proof}
The statement follows by applying $\Omega$ to Proposition \ref{prop:r-ordered-types-exp} and using Corollary \ref{cor:Omega-of-htensor}.
\end{proof}
\begin{theorem}\label{thm:E-in-etensor}
Let $\sigma$ and $\tau$ be types of an integer $n\geq 0$. The coefficient of $e^\otimes_\tau$ in the $e^\otimes$-expansion of $E_\sigma$ is $(-1)^{\area(\tau)}\sum_T \psgn(T)$ where the sum is over all polywrapping block tabloids of shape $\si$ and content $\tau$.
\end{theorem}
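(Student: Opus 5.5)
The plan is to apply the involution $\Omega$ to the $h^\otimes$-expansion of $H_\sigma$ from Theorem \ref{thm:H-in-htensor}, exactly as Theorem \ref{thm:P-in-etensor} was deduced from Theorem \ref{thm:P-in-htensor}. (Here "polywrapping block tabloids" is read as the bounded polywrapping block tabloids of Theorem \ref{thm:H-in-htensor}.)

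Write $\sigma = d_1^{r_1}\cdots d_k^{r_k}$. Theorem \ref{thm:H-in-htensor} gives
\[
H_\sigma = \sum_{\tau\in\Typ(n)} c_{\tau,\sigma}\, h^\otimes_\tau, \qquad c_{\tau,\sigma} = \sum_T \psgn(T),
\]
where $T$ runs over bounded polywrapping block tabloids of shape $\sigma$ and content $\tau$.

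Two facts about $\Omega$ are needed. First, $\Omega$ is an algebra isomorphism with $\Omega(H_\delta) = E_\delta$ for every ordered type $\delta$, and $H_\sigma = \prod_i H_{d_i^{r_i}}$. Hence $\Omega(H_\sigma) = \prod_i E_{d_i^{r_i}} = E_\sigma$. Second, Corollary \ref{cor:Omega-of-htensor} gives $\Omega(h^\otimes_\tau) = (-1)^{\area(\tau)} e^\otimes_\tau$. By linearity of $\Omega$,
\[
E_\sigma = \sum_\tau (-1)^{\area(\tau)} c_{\tau,\sigma}\, e^\otimes_\tau .
\]
Since $\{e^\otimes_\tau\}_{\tau\in\Typ(n)}$ is a basis of $\psym(n)$, the coefficient of $e^\otimes_\tau$ is $(-1)^{\area(\tau)}\sum_T\psgn(T)$, which is the claim.

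An alternative route is to multiply the block expansions of Proposition \ref{prop:Edr-in-etensor}, following the proof of Theorem \ref{thm:H-in-htensor}. The sign then appears as $\prod_i(-1)^{\area(\rho^{(i)})}$. Area is additive under multiset union of blocks and invariant under $\psort$, so this product equals $(-1)^{\area(\tau)}$ for every tabloid of content $\tau$. The sign can therefore be pulled out of the sum.

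There is no substantive obstacle. The only points to check are that $\Omega$ is multiplicative, so that $\Omega(H_\sigma) = E_\sigma$ for non-block types, and that $\area$ is well defined on types, so the sign depends only on $\tau$. The case $n=0$ is trivial.
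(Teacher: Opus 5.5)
Your proposal is correct and is the same argument the paper gives: apply $\Omega$ to the $h^\otimes$-expansion of $H_\sigma$ from Theorem~\ref{thm:H-in-htensor}, use $\Omega(H_\sigma)=E_\sigma$ together with Corollary~\ref{cor:Omega-of-htensor} to replace each $h^\otimes_\tau$ by $(-1)^{\area(\tau)}e^\otimes_\tau$, and read off coefficients in the $e^\otimes$-basis. You also read ``polywrapping block tabloids'' as the bounded ones from Theorem~\ref{thm:H-in-htensor}, which is the intended meaning.
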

\begin{proof}
By definition, $\Omega(H_\si) = E_\si$. The coefficient of $e^\otimes_\tau$ in $E_\si$ as claimed follows from an application of $\Omega$ on the $h^\otimes_\tau$-expansion of $H_\si$ (Theorem \ref{thm:H-in-htensor}) and Corollary \ref{cor:Omega-of-htensor}.
\end{proof}

%30 Aug
\subsection{The transition matrices $\mcM_n(H,e^\ox)$ and $\mcM_n(E, h^\ox)$}\label{ssec:He-Eh}
We use the composition-indexed $e$-expansion of $h_\be$ for $\be\in \Com$ to derive the transition matrices $\mcM_n(H,e^\ox)$ and $\mcM_n(E, h^\ox)$. We first recall a partial order on compositions. We say that a composition $\be$ \textit{refines} $\al$, denoted $\be\leq \al$, if $\be$ is obtained by replacing each entry $\al_i$ with entries of some composition of $\al_i$ in order. For instance, $(2,1,2,3,2,1)$ refines $(5,3,3)$ as we replace $5$ by $(2,1,2)$, $3$ by $(3)$, and $3$ by $(2,1)$. Equivalently, we say that $\al$ \textit{coarsens} $\be$. The maximum element in the partially ordered set $(\Com(n),\leq)$ is $(n)$ and the minimum element is $(1,1,\ldots, 1)$.

For ordered types $\gamma$ and $\delta$ of $n$, \textit{$\gamma$ refines $\de$}, written $\gamma\leq \de$, if $\gamma|_i \leq \delta|_i$ for all $i\geq 1$.
\begin{lemma}\label{lem:hten-to-eten}
For an ordered type $\de$, 
\begin{enumerate}
\item $h^\otimes_\de =\sum\limits_{\ga\leq \de} (-1)^{\area(\ga)- \ell(\ga)} e^\otimes_\ga$.
\item $e^\otimes_\de =\sum\limits_{\ga\leq \de} (-1)^{\area(\ga)- \ell(\ga)} h^\otimes_\ga$.
\end{enumerate}
\end{lemma}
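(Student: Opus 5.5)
The plan is to reduce to the single-variable statement in $\sym$ and then tensor. The key single-composition fact is
\[
h_\be = \sum_{\ga\leq \be} (-1)^{|\ga| - \ell(\ga)} e_\ga
\]
for any composition $\be$, together with its $\omega$-image. Once this holds, part (1) follows directly. We have $h^\otimes_\de = h_{\de|_1}\otimes h_{\de|_2}\otimes\cdots$, and we expand each factor. Tensor products of sums become sums over tuples $(\ga^{(1)},\ga^{(2)},\ldots)$ with $\ga^{(i)}\leq \de|_i$, which is exactly the set of ordered types $\ga\leq\de$. The sign multiplies to $(-1)^{\sum_i (\area(\ga^{(i)})-\ell(\ga^{(i)}))} = (-1)^{\area(\ga)-\ell(\ga)}$, by the definitions of $\area$ and $\ell$ for ordered types. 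Part (2) then follows by applying $\omega$ to the single-composition identity in each tensor factor, which swaps the roles of $h$ and $e$. Alternatively, we can apply $\Omega$ and Corollary \ref{cor:Omega-of-htensor} to part (1): the extra signs $(-1)^{\area(\de)}$ and $(-1)^{\area(\ga)}$ cancel, because $\area(\ga)=\area(\de)$ when $\ga\leq\de$.

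For the single-composition identity, it suffices by multiplicativity to handle a single part $h_m$. Indeed, $h_\be=\prod_j h_{\be_j}$. Any refinement $\ga\leq\be$ is a concatenation of compositions $\ga^{(j)}$ of $\be_j$, and the sign $(-1)^{|\ga|-\ell(\ga)}$ factors over the pieces. So I need
\[
h_m=\sum_{\al\in\Com(m)}(-1)^{m-\ell(\al)}e_\al .
\]
I would prove this from the fundamental relation $\sum_{k=0}^m (-1)^k e_k h_{m-k}=0$ for $m\geq 1$, which follows from $H(z)E(-z)=1$ and is standard in Macdonald I.2. This gives the recursion $h_m=\sum_{k=1}^{m}(-1)^{k-1}e_k h_{m-k}$.

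Then I induct on $m$. Substituting the inductive expansion of $h_{m-k}$ writes $h_m$ as a sum over a first part $k$ followed by a composition $\al'$ of $m-k$, that is, over all $\al\in\Com(m)$. The sign is $(-1)^{k-1}(-1)^{m-k-\ell(\al')}=(-1)^{m-\ell(\al)}$, since $\ell(\al)=\ell(\al')+1$. The base case is $h_0=1$, from the empty composition.

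The only mildly delicate step is the bookkeeping: the composition-indexed expansions must be kept ordered, so that refinement of the ordered type $\de$ corresponds exactly to componentwise refinement. Concatenation of the per-part expansions does this automatically. I expect no genuine obstacle.
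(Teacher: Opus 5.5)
Your proposal is correct and follows the paper's proof. The paper likewise expands each tensor factor $h_{\de|_i}$ via the composition-indexed identity $h_\be=\sum_{\ga\leq\be}(-1)^{|\be|-\ell(\ga)}e_\ga$, multiplies out to get a sum over ordered types $\ga\leq\de$, and obtains (2) by applying $\Omega$ with Corollary \ref{cor:Omega-of-htensor} and $\area(\ga)=\area(\de)$. The only differences are minor: the paper cites the single-composition identity rather than deriving it from $\sum_{k}(-1)^k e_k h_{m-k}=0$ by induction as you do, and your per-factor $\omega$ alternative for (2) is a slightly more direct route that avoids $\Omega$ altogether.
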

\begin{proof}
From \cite[Pg. 26]{qsym-book}, we know that for all compositions $\al$,
\[
h_\al = \sum_{\be\leq \al} (-1)^{n-\ell(\be)} e_\be \quad (*).
\]
If we apply $(*)$ to $h^\ox_\de = h_{\de|_1}\otimes h_{\de|_2}\otimes\ldots$, we get 
\[
h^\ox_\de = \left(  \sum_{\gamma^{(1)}\leq \de|_1} (-1)^{\area(\ga^{(1)}) - \ell(\ga^{(1)})} e_{\ga^{(1)}}\right)\otimes \left(  \sum_{\gamma^{(2)}\leq \de|_2} (-1)^{\area(\ga^{(2)}) - \ell(\ga^{(2)})} e_{\ga^{(2)}}\right) \ox\cdots.
\]
Multiplying out the terms gives 
\[
h^\ox_\de = \sum_{\gamma^{(1)}\leq \de|_1} \sum_{\gamma^{(2)}\leq \de|_2} \ldots \left(\prod_{i\geq 1} (-1)^{\area(\ga^{(i)}) - \ell(\ga^{(i)})}\right)  e_{\ga^{(1)}} \otimes  e_{\ga^{(2)}}\otimes \cdots.
\]
Define an ordered type $\gamma$ such that $\ga|_i =\ga^{(i)}$. Then $\gamma\leq \delta$ by construction, and $e_\ga^\otimes$ has the coefficient $(-1)^{\area(\ga) - \ell(\ga)}$ as $\sum_{i\geq 1} \area(\ga^{(i)}) = \sum_{i\geq 1} \area(\ga|_i) = \area(\ga)$ and $\sum_{i\geq 1} \ell(\ga^{(i)}) = \sum_{i\geq 1} \ell(\ga|_i)= \ell(\ga)$. Thus, each term in the above sum is of the form $(-1)^{\area(\ga) - \ell(\ga)}e_\ga^\ox$ for some $\ga\leq \de$. To see that all $\gamma$ that refine $\de$ appear in the above sum, observe that the condition $\ga\leq \de$ implies $\gamma|_i \leq \delta|_i$ for all $i\geq 1$, and thus $\gamma$ indexes the term $e_{\ga^{(1)}} \otimes  e_{\ga^{(2)}}\otimes \cdots$ for which $\gamma|_i = \ga^{(i)}$. These term $e^\ox_\ga$ appears with the coefficient $(-1)^{\area(\ga)- \ell(\ga)}$ as discussed.

If we apply $\Omega$ on both sides of (1) and use Proposition \ref{cor:Omega-of-htensor}, we find
\[
(-1)^{\area(\de)} h^\otimes_\de = \sum_{\ga\leq \de} (-1)^{\area(\ga)-\ell(\ga)}(-1)^{\area(\ga)} e^\otimes_\ga.
\]
As $\ga$ refines $\de$, $\gamma|_i \leq \delta|_i$ for all $i\geq 1$, which means that both $\gamma|_i$ and $\delta|_i$ are compositions of the same integers. Thus $\area(\ga|_i) = \area(\de|_i)$ for all $i\geq 1$. and $\area(\ga) = \area(\de)$. Rearranging signs proves (2).
\end{proof}

We now use Lemma \ref{lem:hten-to-eten} to find the $e^\ox$-expansion of $H_{d^r}$.

\begin{prop}\label{prop:H-in-eten}
For integers $d,r\geq1$, 
\[
H_{d^r} = \sum_{\gamma\in \XCom_r(dr)} \left[ (-1)^{\area(\ga)-\ell(\ga)} \left(\sum_{\rho \geq \gamma}\hat{s}_r(\rho)\right)\right] e^\otimes_\gamma.
\]
\end{prop}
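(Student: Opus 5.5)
The plan is to substitute the $h^\ox$-expansion of Proposition \ref{prop:r-ordered-types-exp} into part (1) of Lemma \ref{lem:hten-to-eten}, and then interchange the two summations. Following the remark after Proposition \ref{prop:r-ordered-types-exp}, I first write
\[
H_{d^r} = \sum_{\rho\in \XCom_r(dr)} \hat{s}_r(\rho) h^\otimes_\rho .
\]
The restriction $\ell(\rho|_i)\leq r$ may be dropped because $\hat{s}_r(\rho)=0$ whenever some $\rho|_i$ has length exceeding $r$. This avoids any bookkeeping about lengths under refinement. Lemma \ref{lem:hten-to-eten}(1) then gives, for each such $\rho$,
\[
h^\otimes_\rho=\sum_{\ga\leq \rho}(-1)^{\area(\ga)-\ell(\ga)}e^\otimes_\ga .
\]
Substituting this in yields the double sum
\[
H_{d^r}=\sum_{\rho}\sum_{\ga\leq\rho}\hat{s}_r(\rho)(-1)^{\area(\ga)-\ell(\ga)}e^\ox_\ga .
\]

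The key step is to swap the order of summation and index first by $\ga$. The sign $(-1)^{\area(\ga)-\ell(\ga)}$ depends only on $\ga$, so it factors out. The coefficient of $e^\otimes_\ga$ is then $(-1)^{\area(\ga)-\ell(\ga)}\sum_{\rho\geq\ga}\hat{s}_r(\rho)$, where $\rho$ ranges over $r$-ordered types coarsening $\ga$.

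To justify the swap, I need to describe exactly which $\ga$ occur and which $\rho$ lie above a given $\ga$:
\begin{itemize}
\item Refinement preserves each $\area(\ga|_i)=\area(\rho|_i)$, as noted in the proof of Lemma \ref{lem:hten-to-eten}. So $\ga$ has size $dr$, and $\ga$ is an $r$-ordered type exactly when $\rho$ is.
\item Consequently every $\ga$ that appears lies in $\XCom_r(dr)$.
\item Conversely, every ordered type $\rho\geq\ga$ is automatically in $\XCom_r(dr)$.
\end{itemize}
Hence the inner sum over $\rho\geq\ga$ needs no extra membership condition, matching the statement. All sums are finite, since $\XCom_r(dr)$ is finite, so the interchange is legitimate.

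The main (mild) obstacle is this bookkeeping about which index sets are preserved under refinement. Once it is checked that $\area(\cdot|_i)$ is invariant under coarsening, the identity follows immediately from the two cited results. One must also not confuse the $e^\otimes_\gamma$ indexed by ordered types with the type-indexed basis; the statement is an expansion in ordered types, just as Proposition \ref{prop:r-ordered-types-exp} is.
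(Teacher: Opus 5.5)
Your proposal is correct and follows essentially the same route as the paper. Both substitute Proposition~\ref{prop:r-ordered-types-exp} into Lemma~\ref{lem:hten-to-eten}(1), note that refinement preserves each $\area(\cdot|_i)$ so every $\gamma\leq\rho$ stays in $\XCom_r(dr)$, and interchange the sums. The only cosmetic difference is timing: you drop the condition $\ell(\rho|_i)\leq r$ before the interchange, whereas the paper carries it through the swap and removes it at the end using $\hat{s}_r(\rho)=0$.
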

\begin{proof}
We begin with the expansion in Proposition \ref{prop:r-ordered-types-exp}:
\[
H_{d^r} = \sum_{\substack{\rho\in \XCom_r(dr)\\ \ell(\rho|_i)\leq r, \forall i}} \hat{s}_r(\rho) h^\otimes_\rho.
\]
and use Lemma \ref{lem:hten-to-eten} to write
\[
H_{d^r} = \sum_{\substack{\rho\in \XCom_r(dr)\\ \ell(\rho|_i)\leq r, \forall i}}\hat{s}_r(\rho) \sum_{\gamma\leq \rho} (-1)^{\area(\ga) - \ell(\ga)} e^\ox_\ga.
\]
Note that if $\gamma\leq \rho$ and $\rho$ is an $r$-ordered type, then $\area(\ga|_i) = \area(\rho|_i)$ is divisible by $r$ for all $i\geq 1$. Thus, $\ga$ is also an $r$-ordered type. However, when a composition $\be$ refines $\al$, then the length of $\be$ is at least the length of $\al$. Thus, the length of $\gamma$ is not bounded by $r$. So we may swap the sums to write
\[
H_{d^r} = \sum_{\ga\in \XCom_r(dr)}  \left[ \left(\sum_{\substack{\rho \geq \gamma\\ \ell(\rho|_i)\leq r, \forall i}} \hat{s}_r(\rho) \right) (-1)^{\area(\ga) - \ell(\ga)}\right] e^\ox_\ga.
\]
By definition, $\hat{s}_r(\rho) = 0$ when $\ell(\rho|_k) > r$ for some $k\geq 1$. So we may write the above sum as
\[
H_{d^r} = \sum_{\ga\in \XCom_r(dr)}  \left[ \left(\sum_{\rho \geq \gamma} \hat{s}_r(\rho) \right) (-1)^{\area(\ga) - \ell(\ga)}\right] e^\ox_\ga.
\]
\end{proof}
\renewcommand{\ast}{^*}

We now define a \textit{labeled polywrapping of the block $d^r$ of ordered content $\ga\in \XCom_r(dr)$}. Start with a polywrapping $D$ of $d^r$. Define $D_k$ to be the $k$-wrapping of some contiguous rows of $d^r$. For all $k\geq 1$, label the rectangles of height $k$ with labels from $\{1,2,\ldots, r\}$ such that when we read through the rectangles involved in the $k$-wrapping of $D_k$ from left to right, the labels increase weakly starting at 1 and are consecutive. We emphasize that not all values in $\{1,2,\ldots, r\}$ need to appear as labels. We call the labeled polywrapping thus obtained $D\ast$. We associate with $D\ast$ an ordered type $\de$, called the \textit{ordered type of the labeling}, as follows: for each $k$, define $\delta|_k$ as the composition whose $i$th entry is the total number of boxes contained in the rectangles labeled $i$ divided by $k$. Define $\psgn\ast(D\ast) = (-1)^{\area(\ga) - \ell(\ga)}\hat{s}_r(\de)$. Write $\pocon(D\ast) = \ga$.
\begin{example}
The following object $D^*$ is a labeled polywrapping of the block $11^5$ with the underlying 5-bounded polywrapping taken from Example \ref{ex:polywrapping-block}. We extract the compositions $\delta|_1 = (7,3)$, $\delta|_2 = (8,6,1)$ and $\delta|_3 = (2,1,2)$. To elaborate, we compute the first entry of $\delta|_2$ by noting that the rectangle of heights 2 labeled 1 occupies 16 squares and dividing this number by $k = 2$. The second entry of $\delta_2$ follows by dividing the total number of squares (16) occupied by the rectangles of height 2 with label 2 by $k =2$. The third entry is computed similarly.
\begin{center}
\raisebox{20.0pt}{\begin{tikzpicture}[scale =0.500,baseline=(current bounding box.north)]
\draw[step = 1.0, gray, thin] (0,0) grid (5,-11);
\draw (0.250,-0.250) -- (0.250,-0.750);
\draw ( 0.1 ,-0.35) node[scale =0.750] {1};
\draw (0.250,-0.250) -- (1.25,-0.250);
\draw (0.250,-0.750) -- (1.25,-0.750);
\draw (1.25,-0.250) -- (2.25,-0.250);
\draw (1.25,-0.750) -- (2.25,-0.750);
\draw (2.25,-0.250) -- (2.75,-0.250);
\draw (2.25,-0.750) -- (2.75,-0.750);
\draw (2.75,-0.250) -- (2.75,-0.750);
\draw (3.25,-0.250) -- (3.25,-0.750);
\draw ( 3.1 ,-0.35) node[scale =0.750] {1};
\draw (3.25,-0.250) -- (4.25,-0.250);
\draw (3.25,-0.750) -- (4.25,-0.750);
\draw (4.25,-0.250) -- (5.25,-0.250);
\draw (4.25,-0.750) -- (5.25,-0.750);
\draw (-0.25,-1.25) -- (0.25,-1.25);
\draw (-0.25,-1.75) -- (0.25,-1.75);
\draw (0.250,-1.25) -- (1.25,-1.25);
\draw (0.250,-1.75) -- (1.25,-1.75);
\draw (1.25,-1.25) -- (1.75,-1.25);
\draw (1.25,-1.75) -- (1.75,-1.75);
\draw (1.75,-1.25) -- (1.75,-1.75);
\draw (2.25,-1.25) -- (2.25,-1.75);
\draw ( 2.1 ,-1.35) node[scale =0.750] {2};
\draw (2.25,-1.25) -- (3.25,-1.25);
\draw (2.25,-1.75) -- (3.25,-1.75);
\draw (3.25,-1.25) -- (4.25,-1.25);
\draw (3.25,-1.75) -- (4.25,-1.75);
\draw (4.25,-1.25) -- (4.75,-1.25);
\draw (4.25,-1.75) -- (4.75,-1.75);
\draw (4.75,-1.25) -- (4.75,-1.75);
\draw (0.250,-2.25) -- (0.250,-3.75);
\draw ( 0.1 ,-2.35) node[scale =0.750] {1};
\draw (0.250,-2.25) -- (1.25,-2.25);
\draw (0.250,-3.75) -- (1.25,-3.75);
\draw (1.25,-2.25) -- (2.25,-2.25);
\draw (1.25,-3.75) -- (2.25,-3.75);
\draw (2.25,-2.25) -- (3.25,-2.25);
\draw (2.25,-3.75) -- (3.25,-3.75);
\draw (3.25,-2.25) -- (4.25,-2.25);
\draw (3.25,-3.75) -- (4.25,-3.75);
\draw (4.25,-2.25) -- (5.25,-2.25);
\draw (4.25,-3.75) -- (5.25,-3.75);
\draw (-0.25,-4.25) -- (0.25,-4.25);
\draw (-0.25,-5.75) -- (0.25,-5.75);
\draw (0.250,-4.25) -- (1.25,-4.25);
\draw (0.250,-5.75) -- (1.25,-5.75);
\draw (1.25,-4.25) -- (2.25,-4.25);
\draw (1.25,-5.75) -- (2.25,-5.75);
\draw (2.25,-4.25) -- (2.75,-4.25);
\draw (2.25,-5.75) -- (2.75,-5.75);
\draw (2.75,-4.25) -- (2.75,-5.75);
\draw (3.25,-4.25) -- (3.25,-5.75);
\draw ( 3.1 ,-4.35) node[scale =0.750] {2};
\draw (3.25,-4.25) -- (4.25,-4.25);
\draw (3.25,-5.75) -- (4.25,-5.75);
\draw (4.25,-4.25) -- (5.25,-4.25);
\draw (4.25,-5.75) -- (5.25,-5.75);
\draw (-0.25,-6.25) -- (0.25,-6.25);
\draw (-0.25,-7.75) -- (0.25,-7.75);
\draw (0.250,-6.25) -- (0.750,-6.25);
\draw (0.250,-7.75) -- (0.750,-7.75);
\draw (0.750,-6.25) -- (0.750,-7.75);
\draw (1.25,-6.25) -- (1.25,-7.75);
\draw ( 1.1 ,-6.35) node[scale =0.750] {2};
\draw (1.25,-6.25) -- (2.25,-6.25);
\draw (1.25,-7.75) -- (2.25,-7.75);
\draw (2.25,-6.25) -- (3.25,-6.25);
\draw (2.25,-7.75) -- (3.25,-7.75);
\draw (3.25,-6.25) -- (3.75,-6.25);
\draw (3.25,-7.75) -- (3.75,-7.75);
\draw (3.75,-6.25) -- (3.75,-7.75);
\draw (4.25,-6.25) -- (4.25,-7.75);
\draw ( 4.1 ,-6.35) node[scale =0.750] {3};
\draw (4.25,-6.25) -- (4.75,-6.25);
\draw (4.25,-7.75) -- (4.75,-7.75);
\draw (4.75,-6.25) -- (4.75,-7.75);
\draw (0.250,-8.25) -- (0.250,-10.8);
\draw ( 0.1 ,-8.35) node[scale =0.750] {1};
\draw (0.250,-8.25) -- (1.25,-8.25);
\draw (0.250,-10.8) -- (1.25,-10.8);
\draw (1.25,-8.25) -- (1.75,-8.25);
\draw (1.25,-10.8) -- (1.75,-10.8);
\draw (1.75,-8.25) -- (1.75,-10.8);
\draw (2.25,-8.25) -- (2.25,-10.8);
\draw ( 2.1 ,-8.35) node[scale =0.750] {2};
\draw (2.25,-8.25) -- (2.75,-8.25);
\draw (2.25,-10.8) -- (2.75,-10.8);
\draw (2.75,-8.25) -- (2.75,-10.8);
\draw (3.25,-8.25) -- (3.25,-10.8);
\draw ( 3.1 ,-8.35) node[scale =0.750] {3};
\draw (3.25,-8.25) -- (4.25,-8.25);
\draw (3.25,-10.8) -- (4.25,-10.8);
\draw (4.25,-8.25) -- (4.75,-8.25);
\draw (4.25,-10.8) -- (4.75,-10.8);
\draw (4.75,-8.25) -- (4.75,-10.8);
\end{tikzpicture}}
\end{center}
We find $\ga = \pocon(D^*) = 1^{3,4,3}2^{8,3,3,1}3^{2,1,2}$. Also we compute $\hat{s}_5(\delta)$ to be $(-3)(1)(2) = -6$, $\area(\gamma) = 30$ and $\ell(\ga) = 10$. So $\psgn\ast(D\ast) = (-1)^{30 - 10} (-6) = -6$.
\end{example}
%Aug 31
\begin{prop}\label{prop:Hdr-in-eten}
For integers $d,r\geq 1$, 
\[
H_{d^r} = \sum_{D\ast} \psgn\ast(D\ast) e^\ox_{\pocon(D\ast)}
\]
\end{prop}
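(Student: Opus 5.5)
The plan is to read Proposition \ref{prop:H-in-eten} combinatorially: the coefficient of $e^\ox_\ga$ there is $(-1)^{\area(\ga)-\ell(\ga)}\sum_{\rho\geq\ga}\hat{s}_r(\rho)$. So it suffices to find, for each $\ga\in\XCom_r(dr)$, a bijection between the labeled polywrappings $D\ast$ of $d^r$ with $\pocon(D\ast)=\ga$ and the ordered types $\rho\geq\ga$ with $\ell(\rho|_k)\leq r$ for all $k$. Under this bijection the ordered type of the labeling should be $\rho$, so that $\psgn\ast(D\ast)=(-1)^{\area(\ga)-\ell(\ga)}\hat{s}_r(\rho)$. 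The types $\rho$ with some $\ell(\rho|_k)>r$ contribute nothing, since then $\hat{s}_r(\rho)=0$.

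First, fix $\ga$. There is exactly one unlabeled polywrapping $D$ with $\pocon(D)=\ga$. Indeed, $\bdg(d^r)$ splits into the blocks $(k\area(\ga|_k)/r)^r$, stacked in increasing order of $k$, and each $k$-wrapping of ordered content $\ga|_k$ is unique by its construction from the partial sums of $\ga|_k$. This polywrapping need not be $r$-bounded, which is why the remark before Proposition \ref{prop:H-dr-in-h-combo} anticipated unbounded polywrappings here. So all the freedom in $D\ast$ lies in the labeling.

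Second, fix $k$. The rectangles of height $k$, read left to right, have column-lengths $\ga|_k=(\ga_1,\dots,\ga_m)$. A valid labeling assigns weakly increasing labels, starting at $1$, with consecutive values, and using labels in $\{1,\dots,r\}$. Such a labeling is the same as cutting the sequence $(\ga_1,\dots,\ga_m)$ into at most $r$ contiguous nonempty groups. The group with label $i$ covers $k\sum_{j\in\text{group } i}\ga_j$ boxes, so the $i$th entry of $\de|_k$ is the sum of $\ga_j$ over that group. Hence $\de|_k$ is a coarsening of $\ga|_k$ of length at most $r$. Conversely, any coarsening $\al\geq\ga|_k$ with $\ell(\al)\leq r$ arises from exactly one labeling, because coarsenings of a composition correspond bijectively to choices of cut points. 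Doing this independently for each $k$ gives the bijection $D\ast\mapsto\de$ onto $\{\rho\geq\ga:\ell(\rho|_k)\leq r\ \forall k\}$.

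Third, sum over $D\ast$ with $\pocon(D\ast)=\ga$. This gives $(-1)^{\area(\ga)-\ell(\ga)}\sum_{\rho}\hat{s}_r(\rho)$, which matches Proposition \ref{prop:H-in-eten} term by term. Summing over $\ga\in\XCom_r(dr)$ proves the claim.

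The main obstacle is the uniqueness of the unlabeled polywrapping for a given $\ga$, together with the check that labels live in $\{1,\dots,r\}$ independently for each height $k$. The first point is not stated explicitly earlier, and I would verify it directly from the definition of a $k$-wrapping. The second point must match the bound $\ell(\rho|_k)\leq r$ imposed height by height.
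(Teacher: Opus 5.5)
Your proposal is correct and follows essentially the same route as the paper. The paper also starts from Proposition~\ref{prop:H-in-eten}, attaches to each $\gamma$ its polywrapping, and encodes each coarsening $\rho\geq\gamma$ as the labeling that groups consecutive rectangles of each height, so that $\psgn\ast(D\ast)=(-1)^{\area(\ga)-\ell(\ga)}\hat{s}_r(\rho)$. Your explicit treatment of the bound $\ell(\rho|_k)\leq r$ (those terms vanish since $\hat{s}_r(\rho)=0$) and of the uniqueness of the underlying unlabeled polywrapping fills in details the paper leaves implicit.
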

\begin{proof}
From Proposition \ref{prop:H-in-eten}, 
\[
H_{d^r} = \sum_{\gamma\in \XCom_r(dr)} \sum_{\rho \geq \gamma} (-1)^{\area(\ga)-\ell(\ga)} \hat{s}_r(\rho) e^\otimes_\gamma.
\]
As described in the proof of Proposition \ref{prop:H-dr-in-h-combo}, we can associate a polywrapping of block $d^r$ with each ordered type $\ga$. Now suppose $\rho \geq \gamma$. For each $i$, $\rho|_i \geq \gamma|_i$. For a fixed $i$, denote $\rho|_i = \al = (\al_1,\al_2,\ldots, \al_k)$ and $\gamma|_i = \be = (\be_1,\be_2, \ldots, \be_l)$. Suppose there exist integers $0 = r_0 < r_1 < \ldots < r_k$ such that $\al_i = \be_{r_{i-1}+1} + \beta_{r_{i-1}+2} + \ldots + \be_{r_i}$. Then from left to right, label the rectangles $1,2,\ldots, r_1$ by 1, rectangles $r_1 +1, r_1 + 2,\ldots, r_2$ by 2, and so on labeling the rectangles $r_{k-1} + 1, r_{k-1} + 2,\ldots, r_k$ by $k$. This results in the labeled polywrapping $D\ast$ for which $\psgn\ast(D\ast) = (-1)^{\area(\ga)-\ell(\ga)} \hat{s}_r(\rho)$ holds. The converse can be argued similarly, and gives us the claim.
\end{proof}
Define a \textit{labeled polywrapping block tabloid $T\ast$} of shape $\si = d_1^{r_1}d_2^{r_2}\ldots d_k^{r_k}$ and content $\tau$ as a list $(D\ast_{(1)}, D\ast_{(2)}, \ldots, D\ast_{(k)})$ such that each $D\ast_{(i)}$ is a labeled polywrapping of $d_i^{r_i}$ of ordered content $\ga^{(i)}$, and the multiset union $\bigcup_{i=1}^k \psort(\ocon(\de^{(i)}))$ equals $\tau$. Define $\psgn\ast(T\ast) = \prod_{i=1}^k \psgn\ast(D\ast_{(i)})$.
\begin{theorem}\label{thm:H-in-etensor}
For types $\sigma$ and $\tau$ of a positive integer $n$, the coefficient of $e^\ox_\tau$ in the $e^\ox$-expansion of $H_\si$ is $\sum_{T\ast} \psgn\ast(T\ast)$ where the sum is over all labeled polywrapping tabloids of shape $\si$ and content $\tau$.
\end{theorem}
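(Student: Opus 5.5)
The plan is to follow the template of the proofs of Theorem \ref{thm:P-in-htensor} and Theorem \ref{thm:H-in-htensor}, with Proposition \ref{prop:Hdr-in-eten} replacing Proposition \ref{prop:H-dr-in-h-combo}. Write $\si = d_1^{r_1}d_2^{r_2}\ldots d_k^{r_k}$, so that $H_\si = H_{d_1^{r_1}}H_{d_2^{r_2}}\cdots H_{d_k^{r_k}}$. For each factor, Proposition \ref{prop:Hdr-in-eten} gives
\[
H_{d_i^{r_i}} = \sum_{D\ast_{(i)}} \psgn\ast(D\ast_{(i)})\, e^\ox_{\pocon(D\ast_{(i)})},
\]
where the sum runs over labeled polywrappings of $d_i^{r_i}$, with labels drawn from $\{1,\ldots,r_i\}$. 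We multiply these $k$ expansions together and distribute.

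Next we use the multiplicativity of the $e^\ox$-basis. For ordered types $\de$ and $\rho$, we have $e^\ox_\de e^\ox_\rho = e^\ox_{\de\cup\rho}$, where $\de\cup\rho$ is the type given by the multiset union of blocks. This holds for the same reason as for $h^\ox$: factors multiply tensor-wise, and $e_\be e_\ga = e_{\sort(\be\cup\ga)}$ in each factor. So a choice of one labeled polywrapping $D\ast_{(i)}$ for each $i$ contributes the term
\[
\prod_{i=1}^k \psgn\ast(D\ast_{(i)})\; e^\ox_{\bigcup_i \psort(\pocon(D\ast_{(i)}))}.
\]
Fix a type $\tau$ and collect the coefficient of $e^\ox_\tau$. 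It is the sum of $\prod_i \psgn\ast(D\ast_{(i)})$ over all lists $(D\ast_{(1)},\ldots,D\ast_{(k)})$ whose sorted ordered contents have multiset union $\tau$.

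It remains to identify these lists with labeled polywrapping block tabloids of shape $\si$ and content $\tau$, and to note that $\psgn\ast(T\ast)$ is by definition the product of the $\psgn\ast(D\ast_{(i)})$. Both directions are immediate from the definitions. A tabloid is exactly a list of independently chosen labeled polywrappings of the blocks of $\si$. Conversely, every such list whose contents union to $\tau$ is a tabloid. Because each block is wrapped independently, no choice is double-counted or constrained across blocks.

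The only real obstacle is a bookkeeping point in the definition of the tabloid. Its content condition is written via $\ocon(\de^{(i)})$, and this must be read as the polywrapping content $\pocon(D\ast_{(i)}) = \ga^{(i)}$, not the labeling type $\de^{(i)}$. That reading matches the index $e^\ox_{\pocon(D\ast)}$ in Proposition \ref{prop:Hdr-in-eten}. I would state this interpretation explicitly, and also note that each $D\ast_{(i)}$ uses labels bounded by $r_i$ (the bound coming from $\hat{s}_{r_i}$ vanishing on long compositions). With those conventions fixed, the proof is the same distribution argument as in Theorem \ref{thm:H-in-htensor}.
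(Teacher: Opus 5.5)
Your proposal is correct and follows the same route as the paper. The paper's proof simply says it mirrors Theorem \ref{thm:H-in-htensor}: expand each $H_{d_i^{r_i}}$ by Proposition \ref{prop:Hdr-in-eten}, multiply using $e^\ox_\de e^\ox_\rho = e^\ox_{\de\cup\rho}$, and identify the resulting lists with labeled polywrapping block tabloids. You are also right that the content condition must be read as $\bigcup_i \psort(\pocon(D^*_{(i)})) = \tau$ rather than via the labeling type, which fixes a typo in the paper's definition.
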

\begin{proof}
The proof is similar to the proof of Theorem \ref{thm:H-in-htensor}
\end{proof}

\begin{prop}\label{prop:Edr-in-htensor}
For integers $d,r\geq 1$, \[
E_{d^r} = \sum_{\gamma\in \XCom_r(dr)} \left[ (-1)^{\ell(\ga)} \left(\sum_{\de\geq \gamma}\hat{s}_r(\de)  \right)h^\otimes_\gamma\right]\]
\end{prop}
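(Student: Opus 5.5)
The plan is to obtain the statement by applying the involution $\Omega$ to the $e^\ox$-expansion of $H_{d^r}$ in Proposition \ref{prop:H-in-eten}, exactly as Proposition \ref{prop:Edr-in-etensor} was obtained from Proposition \ref{prop:r-ordered-types-exp}. Since $\Omega(H_{d^r}) = E_{d^r}$ and $\Omega$ is linear, we get
\[
E_{d^r} = \sum_{\gamma\in \XCom_r(dr)} (-1)^{\area(\ga)-\ell(\ga)} \Big(\sum_{\rho\geq \ga}\hat{s}_r(\rho)\Big)\, \Omega(e^\otimes_\ga).
\]
Corollary \ref{cor:Omega-of-htensor} gives $\Omega(e^\otimes_\ga) = (-1)^{\area(\ga)} h^\otimes_\ga$. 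The sign therefore becomes $(-1)^{2\area(\ga)-\ell(\ga)} = (-1)^{\ell(\ga)}$, which is the claimed coefficient after renaming $\rho$ to $\de$.

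As a sanity check, and as an alternative route if one prefers to avoid Proposition \ref{prop:H-in-eten}, I would also derive the formula directly. Start from Proposition \ref{prop:Edr-in-etensor}, which reads $E_{d^r} = \sum_\rho (-1)^{\area(\rho)}\hat{s}_r(\rho) e^\otimes_\rho$. Substitute part (2) of Lemma \ref{lem:hten-to-eten}, $e^\otimes_\rho = \sum_{\ga\leq\rho}(-1)^{\area(\ga)-\ell(\ga)}h^\otimes_\ga$, and then swap the order of summation. Refinement preserves $\area(\ga|_i)=\area(\rho|_i)$, so $\area(\rho)=\area(\ga)$ and the signs combine to $(-1)^{\ell(\ga)}$. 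Moreover, $\ga$ ranges over $\XCom_r(dr)$ because $\rho$ does.

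The only points needing care are bookkeeping. The first is dropping the restriction $\ell(\rho|_i)\leq r$, which is harmless because $\hat{s}_r(\rho)=0$ otherwise, as noted in the proof of Proposition \ref{prop:H-in-eten}. The second is checking that every $\ga$ in the sum is again an $r$-ordered type, so the index set is correct.

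I expect no real obstacle here. The one step to double-check is the sign arithmetic: that $\area$ is invariant under refinement, so the factor $(-1)^{\area}$ cancels cleanly rather than leaving a residual sign.
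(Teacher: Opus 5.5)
Your first route is the paper's own proof: apply $\Omega$ to Proposition~\ref{prop:H-in-eten}, use $\Omega(e^\otimes_\ga) = (-1)^{\area(\ga)} h^\otimes_\ga$ from Corollary~\ref{cor:Omega-of-htensor}, and simplify the sign to $(-1)^{\ell(\ga)}$. Your alternative derivation, from Proposition~\ref{prop:Edr-in-etensor} via Lemma~\ref{lem:hten-to-eten}(2) and a swap of sums, is also correct. It is simply the $\Omega$-image of the paper's argument for Proposition~\ref{prop:H-in-eten}.
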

\begin{proof}
The statement follows immediately by applying $\Omega$ on Proposition \ref{prop:H-in-eten} and using Corollary \ref{cor:Omega-of-htensor}.
\end{proof}
\begin{theorem}
For types $\sigma$ and $\tau$ of a positive integer $n$, the coefficient of $h^\ox_\tau$ in the $h^\ox$-expansion of $E_\si$ is $(-1)^{\area(\tau)}\sum_{T\ast} \psgn\ast(T\ast)$ where the sum is over all labeled polywrapping block tabloids of shape $\si$ and content $\tau$.
\end{theorem}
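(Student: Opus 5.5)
The plan is to deduce the statement from Theorem \ref{thm:H-in-etensor} by applying the involution $\Omega$, in the same way Theorem \ref{thm:E-in-etensor} was obtained from Theorem \ref{thm:H-in-htensor}. Write $H_\si = \sum_\tau c_{\tau,\si} e^\ox_\tau$, where by Theorem \ref{thm:H-in-etensor} $c_{\tau,\si} = \sum_{T\ast}\psgn\ast(T\ast)$, summed over labeled polywrapping block tabloids of shape $\si$ and content $\tau$. Since $\Omega$ is an algebra homomorphism with $\Omega(H_\de)=E_\de$, we get $\Omega(H_\si)=E_\si$. Applying $\Omega$ to the expansion then gives $E_\si = \sum_\tau c_{\tau,\si}\,\Omega(e^\ox_\tau)$.

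Next, invoke Corollary \ref{cor:Omega-of-htensor}, specialized to the type $\tau$ (a type is in particular an ordered type): $\Omega(e^\ox_\tau) = (-1)^{\area(\tau)} h^\ox_\tau$. Substituting yields
\[
E_\si = \sum_{\tau} (-1)^{\area(\tau)} c_{\tau,\si}\, h^\ox_\tau .
\]
The family $\{h^\ox_\tau\}_{\tau\in\Typ(n)}$ is a basis of $\psym(n)$, so we can read off the coefficient of $h^\ox_\tau$ as $(-1)^{\area(\tau)}\sum_{T\ast}\psgn\ast(T\ast)$. This is exactly the claim.

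As a consistency check, the same argument applies at the single-block level. Proposition \ref{prop:Edr-in-htensor} arises from Proposition \ref{prop:H-in-eten} in this way: $(-1)^{\area(\ga)-\ell(\ga)}(-1)^{\area(\ga)} = (-1)^{\ell(\ga)}$. Multiplying those block expansions over the blocks of $\si$, as in the proof of Theorem \ref{thm:H-in-htensor}, recovers the same tabloid sum.

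The only delicate point is making sure the sign factor depends only on $\tau$, so that it can be pulled out of the sum over tabloids. This holds because $\area$ is additive under the multiset union of blocks and is invariant under $\psort$, so each tabloid of content $\tau$ contributes the common factor $(-1)^{\area(\tau)}$. Beyond that bookkeeping, I expect no real obstacle.
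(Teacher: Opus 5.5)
Your proof is correct and is essentially the paper's argument. The paper says the proof mirrors Theorems \ref{thm:H-in-htensor} and \ref{thm:H-in-etensor}: it multiplies the single-block expansions of Proposition \ref{prop:Edr-in-htensor} over the blocks of $\si$, and those expansions were themselves obtained by applying $\Omega$ to Proposition \ref{prop:H-in-eten}. You instead apply $\Omega$ once to the finished Theorem \ref{thm:H-in-etensor}, exactly as in the proof of Theorem \ref{thm:E-in-etensor}, so the factor $(-1)^{\area(\tau)}$ comes straight from Corollary \ref{cor:Omega-of-htensor}; the additivity-of-area bookkeeping you flag is only needed in the block-by-block version.
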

\begin{proof}
The proof is similar to that of Theorems \ref{thm:H-in-htensor} and \ref{thm:H-in-etensor}.
\end{proof}

%15 Sept 2026
\subsection{The transition matrices $\mcM_n(E^+, e^\otimes)$ and $\mcM_n(E^+,h^\ox)$}\label{ssec:Ue-Uh}
 We first find the $e$-expansion of $e_n[p_r]$ and then use it to find the $e^\ox$-expansion of $E^+_{d^r}$, and then find $\mcM_n(E^+,e^\ox)$ for $n\geq 0$. Then we use the $h^\ox$-expansion of $e^\ox_\tau$ to find the $h^\ox$-expansion of $E^+_\si$.

Recall that $\XCom_r(n)$ is the set of all $r$-ordered types $\de$ of $n$, which we recall are ordered types $\de$ for which $\area(\de|_i)$ is a multiple of $r$ for all $i\geq 1$. Also, recall $\area(\de) = \sum_{i\geq 1} \area(\de|_i)$.
\begin{prop}
For integers $d, r\geq 1$, 
\[
E^+_{d^r} = \sum_{\substack{\rho\in \XCom_r(dr)\\ \ell(\rho|_i)\leq r, \forall i}} (-1)^{\frac{(r-1)\area(\rho)}{r}} \hat{s}_r(\rho) e^\otimes_\rho.
\]
\end{prop}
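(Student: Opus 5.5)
Use part (3) of Proposition \ref{prop:dr-expansions}: $E^+_{d^r}$ is a sum over $\la\in\Par(d)$ of the pure tensors $e_{m_1(\la)}[p_r]\otimes e_{m_2(\la)}[p_r]\otimes\cdots$. It suffices to find the composition-indexed $e$-expansion of each factor $e_m[p_r]$. Then we expand the tensor product and reindex by $r$-ordered types, exactly as in the proof of Proposition \ref{prop:r-ordered-types-exp}.

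The key step is the plethystic identity for $e_m[p_r]$. By the proposition quoted from Macdonald on $\omega(f[g])$, if $r$ is odd then $\omega(h_m[p_r]) = \omega(h_m)[\omega(p_r)] = e_m[p_r]$, since $\omega(p_r)=p_r$. If $r$ is even, then $\omega(h_m[p_r]) = h_m[\omega(p_r)] = h_m[-p_r]$. By Proposition \ref{prop:negation-rule}, $h_m[-p_r] = (-1)^m e_m[p_r]$. In both cases,
\[
e_m[p_r] = (-1)^{(r-1)m}\,\omega(h_m[p_r]).
\]
Applying $\omega$ to Corollary \ref{cor:h-expansions} for $h_m[p_r]$ then gives
\[
e_m[p_r] = (-1)^{(r-1)m}\sum_{\be\in\Com(mr)} \hat{s}_r(\be)\, e_\be,
\]
where $m=\area(\be)/r$. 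The sign is therefore $(-1)^{(r-1)\area(\be)/r}$. This parity check is the only place I expect care to be needed: the exponent must be written as $(r-1)m$, not $m$, so that it is correct for both parities of $r$.

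Substitute into the tensor product:
\[
E^+_{d^r} = \sum_{\la\in\Par(d)} \sum_{\be^{(1)}\in\Com(m_1(\la)r)}\sum_{\be^{(2)}\in\Com(m_2(\la)r)}\cdots \prod_i (-1)^{(r-1)\area(\be^{(i)})/r}\hat{s}_r(\be^{(i)})\; e_{\be^{(1)}}\otimes e_{\be^{(2)}}\otimes\cdots.
\]
As in the proof of Proposition \ref{prop:r-ordered-types-exp}, the tuples $(\la,\be^{(1)},\be^{(2)},\ldots)$ are in bijection with $r$-ordered types $\rho$ of $dr$ via $\rho|_i=\be^{(i)}$. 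The reason is that $\area(\rho|_i)=m_i(\la)r$ and $\sum_i i\,m_i(\la)=d$. The product of signs becomes $(-1)^{(r-1)\sum_i\area(\rho|_i)/r} = (-1)^{(r-1)\area(\rho)/r}$. The product of the $\hat{s}_r$ factors is $\hat{s}_r(\rho)$, and the tensor is $e^\otimes_\rho$. Finally, $\hat{s}_r(\be)=0$ whenever $\ell(\be)>r$, so we may restrict to $\ell(\rho|_i)\le r$, which yields the claimed formula.
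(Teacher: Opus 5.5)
Your proposal is correct and follows the paper's proof step for step. You expand each factor $e_{m_i(\la)}[p_r]$ from part (3) of Proposition \ref{prop:dr-expansions} by applying $\omega$ to Corollary \ref{cor:h-expansions}, then reindex by $r$-ordered types as in Proposition \ref{prop:r-ordered-types-exp}. The only difference is that you derive $e_m[p_r]=(-1)^{(r-1)m}\,\omega(h_m[p_r])$ yourself, by splitting on the parity of $r$ and using the $\omega$-of-plethysm rule together with Proposition \ref{prop:negation-rule}, whereas the paper cites this identity directly from Macdonald (Ex.~I.8.1(c)). Your derivation of it is valid.
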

\begin{proof}
From Proposition \ref{prop:dr-expansions}, $E^+_{d^r} = \sum\limits_{\la\in \Par(d)} e_{m_1(\la)}[p_r] \otimes e_{m_2(\la)}[p_r] \otimes \cdots$. To find the $e^\ox$-expansion of $E^+_{d^r}$, we need to know the $e$-expansion of $e_n[p_r]$ for all integers $n\geq 0$ and $r\geq 1$. From \cite[Ex. I.8 1(c)]{macd}, $e_n[p_r] = (-1)^{n(r-1)} \omega(h_n[p_r])$. Combining this with Corollary \ref{cor:h-expansions}, we find
\[
e_n[p_r] = \sum_{\be\in \Com(nr)} (-1)^{n(r-1)} \hat{s}_r(\be)e_\be.\]
where we may view the sum as being over all compositions $\be$ of $nr$ with length at most $r$, as for the rest, $\hat{s}_r(\be) = 0$. This yields
\begin{align*}
 E^+_{d^r} &= \sum\limits_{\la\in \Par(d)} \left(\sum_{\be^{(1)}\in \Com(m_1(\la)r)} (-1)^{m_1(\la)(r-1)}\hat{s}_r(\be^{(1)}) e_{\be^{(1)}} \right) \otimes\\ &\left(\sum_{\be^{(2)}\in \Com(m_2(\la)r))} (-1)^{m_2(\la)(r-1)}\hat{s}_r(\be^{(2)}) e_{\be^{(2)}} \right)\otimes\cdots\\ &\otimes\cdots \left(\sum_{\be^{(k)}\in \Com(m_k(\la)r)} (-1)^{m_k(\la)(r-1)}\hat{s}_r(\be^{(k)}) e_{\be^{(k)}} \right)\otimes \cdots
\end{align*}
Note that $m_i(\la) = \area(\be|_i)/r$ in the notation of the above sum. So,
\begin{align*}
E^+_{d^r} = & \sum\limits_{\la\in \Par(d)} \sum_{\be^{(1)}\in \Com(m_1(\la)r)} \sum_{\be^{(2)}\in \Com(m_2(\la)r)}\cdots\\ &\ldots \sum_{\be^{(k)}\in \Com(m_k(\la)r)}\ldots \left(\prod_{i\geq 1} (-1)^{\area(\be^{(i)})\frac{r-1}{r}}\hat{s}_r(\be^{(i)})\right) e_{\be^{(1)}}\otimes e_{\be^{(2)}}\otimes\cdots.
\end{align*}
As in the proof of Proposition \ref{prop:r-ordered-types-exp}, the above expression simplifies to a sum over $r$-ordered types $\rho$.  This yields
\[
E^+_{d^r} = \sum_{\substack{\rho\in \XCom_r(dr)\\ \ell(\rho|_i)\leq r, \forall i}} (-1)^{\frac{(r-1)\area(\rho)}{r}} \hat{s}_r(\rho) e^\otimes_\rho.
\]
\end{proof}

For a polywrapping $D$ of a block $d^r$ with ordered content $\pocon(D) = \rho$, define $\psgn_+(D) = (-1)^{\frac{(r-1)\area(\rho)}{r}}\hat{s}_r(\rho)$. 
\begin{prop}\label{prop:Edr-in-e-combinatorial}
For integers $d,r \geq 1$, 
\[
E^+_{d^r} = \sum_{D} \psgn_+(D) e^\otimes_{\pocon(D)}
\]
where the sum is over all $r$-bounded polywrappings $D$ of block $d^r$
\end{prop}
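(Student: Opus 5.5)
This will follow the proof of Proposition \ref{prop:H-dr-in-h-combo} almost verbatim, with the only change being the sign attached to each term. The starting point is the $e^\otimes$-expansion of $E^+_{d^r}$ from the preceding proposition. It writes $E^+_{d^r}$ as a sum over $r$-ordered types $\rho\in\XCom_r(dr)$ with $\ell(\rho|_i)\leq r$ for all $i$, each term being $(-1)^{(r-1)\area(\rho)/r}\hat{s}_r(\rho)\,e^\otimes_\rho$. The plan is to set up a bijection between this index set and the set of $r$-bounded polywrappings $D$ of $d^r$, sending $D$ to $\pocon(D)$. Under it, $\psgn_+(D)$ is by definition exactly the coefficient $(-1)^{(r-1)\area(\rho)/r}\hat{s}_r(\rho)$ whenever $\pocon(D)=\rho$.

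\emph{Forward direction.} Take such a $\rho$ and write $\area(\rho|_k)=a_kr$. Since $|\rho|=dr$, we have $\sum_k k a_k=d$. So $\bdg(d^r)$ can be cut, top to bottom in increasing $k$, into the blocks $(k\area(\rho|_k)/r)^r=(ka_k)^r$. For each $k$ we take the unique $k$-wrapping of $(ka_k)^r$ with ordered content $\rho|_k$; it exists because $\area(\rho|_k)=a_kr$ equals the number of column-sheets available. This produces a polywrapping $D$ with $\pocon(D)=\rho$. It has $\ell(\rho|_k)\leq r$ rectangles of height $k$, so $D$ is $r$-bounded.

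\emph{Reverse direction.} Start from an $r$-bounded polywrapping $D$. Read the rectangles of each height $k$ from left to right (and top to bottom) to recover a composition $\be^{(k)}$. Its size is divisible by $r$ because those rectangles tile a full-width array of width $r$. Its length is at most $r$ by boundedness. Hence $\pocon(D)=1^{\be^{(1)}}2^{\be^{(2)}}\cdots$ is an $r$-ordered type satisfying the length condition.

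\emph{Inverse and sign check.} The two constructions are inverse to each other. The decomposition into height-$k$ sub-blocks is forced by the content, and a $k$-wrapping is determined by its ordered content. Since $\psgn_+(D)$ depends only on $\pocon(D)$, reindexing the sum gives the claim.

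The only point needing care, rather than real difficulty, is the uniqueness of $D$ given $\rho$. This follows from the ordering convention in the definition of a polywrapping, namely that rectangles of smaller height lie above rectangles of larger height, together with the deterministic left-to-right filling in the definition of a $k$-wrapping.
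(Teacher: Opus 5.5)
Your proposal is correct and follows the paper's approach: the paper proves this by saying it is the same as the proof of Proposition \ref{prop:H-dr-in-h-combo}. That argument re-indexes the $e^\otimes$-expansion of $E^+_{d^r}$, a sum over $\rho\in\XCom_r(dr)$ with $\ell(\rho|_i)\leq r$, by the $r$-bounded polywrappings $D$ with $\pocon(D)=\rho$, and $\psgn_+(D)$ equals the coefficient $(-1)^{(r-1)\area(\rho)/r}\hat{s}_r(\rho)$ by definition — exactly the bijection and sign check you describe.
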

\begin{proof}
The proof is similar to that of Proposition \ref{prop:H-dr-in-h-combo}.
\end{proof}
\begin{example}
Continuing the example from Example \ref{ex:polywrapping-block}, we find $\rho = \pocon(D) = 1^{3,4,3}2^{8,3,3,1}3^{2,1,2}$ with $r = 5$. So $\frac{(r-1)\area(\rho)}{r} = \frac{4\cdot 30}{5} = 24$. So, $\psgn_+(D) = (-1)^{24}(-4) = -4$.
\end{example}
Let $\sigma = d_1^{r_1}d_2^{r_2}\ldots d_k^{r_k}$ be types of a positive integer $n$.
For a polywrapping block tabloid $D = (D^{(1)}, D^{(2)},\ldots, D^{(k)})$, define $\psgn_+(D) = \prod\limits_{i=1}^k \psgn_+(D^{(i)})$.
\begin{theorem}
For types $\si$ and $\tau$ of $n$, the coefficient of $e^\otimes_\tau$ in the $e^\ox$-expansion of $E^+_\si$ is $\sum_{T} \psgn_+(T)$, where the sum is over all polywrapping block tabloids $T$ of shape $\si$ and content $\tau$.
\end{theorem}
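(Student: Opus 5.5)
The plan is to mirror the proof of Theorem \ref{thm:H-in-htensor}, using Proposition \ref{prop:Edr-in-e-combinatorial} in place of Proposition \ref{prop:H-dr-in-h-combo}.

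\emph{Step 1: product expansion.} Write $\si = d_1^{r_1}d_2^{r_2}\cdots d_k^{r_k}$. By definition, $E^+_\si = E^+_{d_1^{r_1}}E^+_{d_2^{r_2}}\cdots E^+_{d_k^{r_k}}$. Substitute, for each factor, the expansion from Proposition \ref{prop:Edr-in-e-combinatorial},
\[
E^+_{d_i^{r_i}} = \sum_{D^{(i)}} \psgn_+(D^{(i)})\, e^\ox_{\pocon(D^{(i)})},
\]
where $D^{(i)}$ ranges over all $r_i$-bounded polywrappings of $d_i^{r_i}$. Here the sign for the $i$th factor must be computed with $r=r_i$.

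\emph{Step 2: combine terms.} Multiply out the product. Use the multiplicativity $e^\ox_\de e^\ox_\rho = e^\ox_{\de\cup\rho}$, where the union is the multiset union of blocks. This holds by the same argument given for $h^\ox$ in the proof of Theorem \ref{thm:P-in-htensor}, since multiplication is commutative inside each tensor factor. Each choice of a list $(D^{(1)},\ldots,D^{(k)})$ then contributes $\prod_i \psgn_+(D^{(i)})\, e^\ox_\tau$, where $\tau = \bigcup_i \psort(\pocon(D^{(i)}))$.

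\emph{Step 3: match with tabloids.} Such lists are exactly the (bounded) polywrapping block tabloids of shape $\si$ and content $\tau$. Conversely, every such tabloid arises from an independent choice of one polywrapping per block. The contribution of a list is $\psgn_+(T)$ by definition. Collecting the terms with a fixed $\tau$ therefore gives the claimed coefficient $\sum_T \psgn_+(T)$.

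The only point needing care is the interpretation of ``polywrapping block tabloid'' in the statement. It should mean the bounded tabloids, with each $D^{(i)}$ being $r_i$-bounded. Including unbounded polywrappings does no harm in any case: their $\hat{s}_{r_i}$ factor vanishes whenever some height carries more than $r_i$ rectangles, so they contribute zero. I expect no real obstacle beyond this bookkeeping and making sure the $r_i$-dependence of the sign is respected block by block.
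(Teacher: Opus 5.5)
Your proposal is correct and follows the paper's own argument. The paper proves this by the same product expansion used for Theorem \ref{thm:H-in-htensor} (by way of Theorem \ref{thm:H-in-etensor}): substitute Proposition \ref{prop:Edr-in-e-combinatorial} for each block, then collect terms using the multiplicativity of $e^\otimes$. Your two remarks are accurate refinements of bookkeeping the paper leaves implicit: each block's sign must be computed with its own $r_i$, and unbounded polywrappings contribute zero because $\hat{s}_{r_i}$ vanishes on compositions of length greater than $r_i$.
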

\begin{proof}
The proof is similar to that of Theorem \ref{thm:H-in-etensor}.
\end{proof}
\begin{example}
Continuing Example \ref{ex:pw-block-tabloid}, we compute \[\psgn_+(T) = (-1)^{\frac{3(8+4)}{4}}(1\cdot -1)(-1)^{\frac{3(4+8)}{4}}(-2\cdot 1)(-1)^{\frac{2(9)}{3}}(3) = 6.\]
\end{example}
\begin{prop}\label{prop:E+dr-in-h}
For integers $d,r\geq 1$,
\[
E^+_{d^r} = \sum_{\gamma\in \XCom_r(dr)} \left[ (-1)^{\ell(\ga)} \left( \sum_{\de\geq \ga} (-1)^{\frac{\area(\de)}{r}} \hat{s}_r(\de)\right)\right] h^\otimes_\ga.
\]
\end{prop}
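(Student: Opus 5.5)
The plan is to mirror the proof of Proposition \ref{prop:H-in-eten}, starting this time from the $e^\ox$-expansion of $E^+_{d^r}$ proved just above:
\[
E^+_{d^r} = \sum_{\substack{\de\in \XCom_r(dr)\\ \ell(\de|_i)\leq r, \forall i}} (-1)^{\frac{(r-1)\area(\de)}{r}} \hat{s}_r(\de) e^\otimes_\de.
\]
Into each term I will substitute part (2) of Lemma \ref{lem:hten-to-eten}, namely $e^\ox_\de = \sum_{\ga\leq \de} (-1)^{\area(\ga)-\ell(\ga)} h^\ox_\ga$. This produces a double sum over pairs $\ga\leq\de$ with summand
\[
(-1)^{\frac{(r-1)\area(\de)}{r}}(-1)^{\area(\ga)-\ell(\ga)}\hat{s}_r(\de)\, h^\ox_\ga .
\]

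Next I simplify the sign. As observed in the proof of Lemma \ref{lem:hten-to-eten}, refinement preserves area degree by degree, so $\area(\ga)=\area(\de)$. Since $\de$ is an $r$-ordered type, $\area(\de)/r$ is an integer. The total exponent is therefore
\[
\tfrac{(r-1)\area(\de)}{r}+\area(\de)-\ell(\ga) = 2\area(\de)-\tfrac{\area(\de)}{r}-\ell(\ga),
\]
which has the same parity as $\tfrac{\area(\de)}{r}+\ell(\ga)$. So each summand equals $(-1)^{\ell(\ga)}(-1)^{\area(\de)/r}\hat{s}_r(\de)h^\ox_\ga$.

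Then I swap the order of summation so that the outer sum runs over $\ga$. Because $\ga\leq\de$ forces $\area(\ga|_i)=\area(\de|_i)$, every $\ga$ that appears is again an element of $\XCom_r(dr)$. Conversely, every $\ga\in\XCom_r(dr)$ can be paired with any of its coarsenings $\de$, and each such $\de$ is automatically in $\XCom_r(dr)$. The constraint $\ell(\ga|_i)\leq r$ does not pass to $\ga$, since refinement only increases length; so the outer sum really is over all of $\XCom_r(dr)$. In the inner sum over $\de\geq\ga$, I drop the condition $\ell(\de|_i)\leq r$ exactly as in Proposition \ref{prop:H-in-eten}, because $\hat{s}_r(\de)=0$ whenever some $\ell(\de|_k)>r$. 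This yields the stated formula.

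The only step needing care is the parity computation. In particular, it requires using $\area(\ga)=\area(\de)$ before combining the exponents, and checking that $\area(\de)/r$ is an integer so that the reduction of $-\area(\de)/r$ to $+\area(\de)/r$ modulo $2$ is legitimate. The interchange of sums and the removal of the length constraint are routine repetitions of the argument in Proposition \ref{prop:H-in-eten}.
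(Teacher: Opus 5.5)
Your proposal is correct and follows essentially the same route as the paper: start from the $e^\otimes$-expansion of $E^+_{d^r}$ and substitute part (2) of Lemma~\ref{lem:hten-to-eten}. Then reduce the sign using that refinement preserves area and that the area is divisible by $r$, and interchange the sums as in Proposition~\ref{prop:H-in-eten} (you justify the parity step and the dropping of the length constraint slightly more explicitly than the paper's brief sketch).
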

\begin{proof}
The result follows by imitating the proof of Proposition \ref{prop:H-in-eten}, using Lemma \ref{lem:hten-to-eten}, and noticing that 
\[
(-1)^{\area(\ga) - \ell(\ga)} (-1)^{\frac{(r-1)\area(\ga)}{r}} = (-1)^{\ell(\ga)} (-1)^{2\area(\ga) - \frac{\area(\ga)}{r}} = (-1)^{\ell(\ga)}(-1)^{\frac{\area(\ga)}{r}}.
\]
Furthermore, $\area(\ga) = \area(\de)$ as $\delta\geq \ga$.
\end{proof}
For a labeled polywrapping $D\ast$ of a block $d^r$ of ordered content $\gamma$ such that ordered type of the labeling is $\de$, define $\psgn_+^*(D\ast) = (-1)^{\ell(\ga)} \left( \sum\limits_{\de\geq \ga} (-1)^{\frac{\area(\de)}{r}} \hat{s}_r(\de)\right)$. We write $\pocon(D\ast) = \ga$.
\begin{prop}
For integers $d\geq 0$ and $r\geq 1$,
\[
E^+_{d^r} = \sum_{D\ast} \psgn_+^*(D\ast) h^\otimes_{\ocon(D\ast)}
\]
where the sum is over all labeled polywrappings $D\ast$ of block $d^r$.
\end{prop}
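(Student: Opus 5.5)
The plan is to translate Proposition \ref{prop:E+dr-in-h} term by term into labeled polywrappings, exactly as Proposition \ref{prop:Hdr-in-eten} was derived from Proposition \ref{prop:H-in-eten}. Expanded, Proposition \ref{prop:E+dr-in-h} reads
\[
E^+_{d^r} = \sum_{\gamma\in \XCom_r(dr)} \sum_{\de\geq \ga} (-1)^{\ell(\ga)} (-1)^{\frac{\area(\de)}{r}} \hat{s}_r(\de)\, h^\otimes_\ga .
\]
So it suffices to build a bijection between pairs $(\ga,\de)$ with $\ga\in\XCom_r(dr)$ and $\de\geq\ga$, and labeled polywrappings $D\ast$ of $d^r$. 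Under this bijection we want $\pocon(D\ast)=\ga$, the ordered type of the labeling to equal $\de$, and the summand to equal $\psgn_+^*(D\ast)$. Here $\psgn_+^*$ is read as the single term $(-1)^{\ell(\ga)}(-1)^{\area(\de)/r}\hat{s}_r(\de)$ determined by the labeling; the displayed $\sum_{\de\geq\ga}$ in the definition is best understood as the sum over labelings.

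First, given $\ga$, I would attach to it the unique (unlabeled) polywrapping $D$ of $d^r$ with ordered content $\ga$. The block $\bdg(d^r)$ splits into the subblocks $(k\area(\ga|_k)/r)^r$ stacked by increasing $k$, and each subblock receives the $k$-wrapping of content $\ga|_k$. This uses only that $\ga$ is an $r$-ordered type, so that $\sum_k k\area(\ga|_k)/r=d$. No $r$-boundedness is needed here, which matches the remark that unbounded polywrappings are used in this section.

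Next, for each $k$, the coarsenings $\al=\de|_k\geq \be=\ga|_k$ are in bijection with the ways of grouping consecutive parts of $\be$. This is the same as labeling the rectangles of height $k$, read left to right, weakly increasing by consecutive labels starting at $1$. The $j$th entry of $\de|_k$ is then the number of boxes under label $j$ divided by $k$, which is exactly the ordered type of the labeling. I would handle the upper bound on labels as follows. A labeling may use more than $r$ labels, since $\ell(\ga|_k)$ is unbounded. In that case $\ell(\de|_k)>r$ forces $\hat{s}_r(\de)=0$, so such labelings contribute nothing and including or excluding them is harmless. I expect this bookkeeping to be the only real obstacle: the paper restricts labels to $\{1,\ldots,r\}$, and one must check that the bijection and the vanishing of $\hat{s}_r$ for long compositions reconcile the two indexings.

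Finally, I would compare signs. We have $\area(\de)=\area(\ga)$ because refinement preserves each $\area(\ga|_k)$, and the sign attached to $D\ast$ is by construction $(-1)^{\ell(\ga)}(-1)^{\area(\de)/r}\hat{s}_r(\de)$, which matches the summand. Summing over all $D\ast$ and recording $h^\otimes_{\pocon(D\ast)}$, where $\ocon(D\ast)$ in the statement denotes $\pocon(D\ast)=\ga$, recovers Proposition \ref{prop:E+dr-in-h}. The case $d=0$ is trivial: both sides equal $1$, coming from the empty labeled polywrapping.
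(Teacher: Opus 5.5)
Your proposal is correct and follows the paper's route. The paper proves this by mirroring Proposition \ref{prop:Hdr-in-eten} on the expansion of Proposition \ref{prop:E+dr-in-h}: it attaches to each $\gamma\in\XCom_r(dr)$ its polywrapping and encodes each coarsening $\delta\geq\gamma$ as a consecutive labeling of the rectangles. Your reading of $\psgn_+^*(D^*)$ as the single term $(-1)^{\ell(\gamma)}(-1)^{\area(\delta)/r}\hat{s}_r(\delta)$ determined by the labeling is what makes that bijection match the stated sum, as is your remark that labelings using more than $r$ labels contribute zero.
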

\begin{proof}
The proof is similar to that of Proposition \ref{prop:Hdr-in-eten}.
\end{proof}
For a labeled polywrapping block tabloid $T\ast =(D\ast_{(1)}, D\ast_{(2)}, \ldots, D\ast_{(k)})$, define $\psgn\ast_+(T\ast) = \prod_{i=1}^k \psgn_+^*(D\ast_{(i)})$.
\begin{theorem}
Let $\si$ and $\tau$ be types of $n$. The coefficient of $h^\otimes_\tau$ in the $h^\ox$-expansion of $E^+_\si$ is $\sum_{T\ast} \psgn^*_+(T\ast)$, where the sum is over all labeled polywrapping block tabloids of shape $\si$ and content $\tau$.
\end{theorem}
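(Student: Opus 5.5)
The argument multiplies out block expansions, in the same way as the proofs of Theorems \ref{thm:H-in-htensor} and \ref{thm:H-in-etensor}. Write $\si = d_1^{r_1}d_2^{r_2}\cdots d_k^{r_k}$, so that $E^+_\si = \prod_{i=1}^k E^+_{d_i^{r_i}}$. This factorization holds because $E^+_\delta$ is defined multiplicatively over blocks, and $E^+_\si$ does not depend on the order of the blocks since $\psym$ is commutative.

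For each factor I invoke the preceding proposition, which expands $E^+_{d^r}$ as $\sum_{D\ast}\psgn_+^*(D\ast)\,h^\ox_{\pocon(D\ast)}$, the sum running over labeled polywrappings of $d^r$. That proposition in turn packages Proposition \ref{prop:E+dr-in-h}, via the bijection from the proof of Proposition \ref{prop:Hdr-in-eten}. The bijection sends a pair $(\ga,\de)$ with $\de\geq\ga$ to the polywrapping of ordered content $\ga$ whose consecutive rectangles of each height are labeled according to how $\de$ coarsens $\ga$. The weight attached to $D\ast$ should be read as the single summand $(-1)^{\ell(\ga)}(-1)^{\area(\de)/r}\hat{s}_r(\de)$, where $\de$ is the ordered type of the labeling. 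I will state this reading explicitly, since otherwise the sum over $\de\geq\ga$ would be counted twice.

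Next I expand the product of the $k$ sums. A term is indexed by a list $(D\ast_{(1)},\ldots,D\ast_{(k)})$ with $D\ast_{(i)}$ a labeled polywrapping of $d_i^{r_i}$. I use $h^\ox_{\ga}h^\ox_{\ga'} = h^\ox_{\ga\cup\ga'}$, where $\ga\cup\ga'$ is the multiset union of blocks, exactly as in the proof of Theorem \ref{thm:P-in-htensor}. With this, each term equals
\[
\Bigl(\prod_{i=1}^k\psgn_+^*(D\ast_{(i)})\Bigr)\,h^\ox_{\tau},
\qquad
\tau=\bigcup_{i=1}^k\psort\bigl(\pocon(D\ast_{(i)})\bigr).
\]
By definition such a list is a labeled polywrapping block tabloid $T\ast$ of shape $\si$ and content $\tau$, and its coefficient is $\psgn_+^*(T\ast)$.

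Conversely, every labeled polywrapping block tabloid of shape $\si$ and content $\tau$ arises from exactly one such choice of its components. So collecting the terms with a fixed $\tau$ gives the coefficient $\sum_{T\ast}\psgn^*_+(T\ast)$. The only delicate point is bookkeeping: the content of $T\ast$ must be built from the ordered contents $\ga^{(i)}$, not from the labeling types $\de^{(i)}$. The paper's definition of labeled polywrapping block tabloids writes $\ocon(\de^{(i)})$, and I will interpret that as $\pocon(D\ast_{(i)})=\ga^{(i)}$, matching the index of $h^\ox$ in the block expansion. I will also note that blocks of $\si$ which are equal are still treated as distinct positions in the list, which is consistent with the product expansion. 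No real obstacle is expected beyond these consistency checks.
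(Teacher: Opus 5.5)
Your proposal is correct and follows the same route as the paper, whose proof simply defers to the block-by-block product expansion used for Theorems \ref{thm:H-in-htensor} and \ref{thm:H-in-etensor}. Your two interpretive fixes are the right ones. The weight $\psgn^*_+(D\ast)$ must be the single summand $(-1)^{\ell(\ga)}(-1)^{\area(\de)/r}\hat{s}_r(\de)$ for the labeling type $\de$, since otherwise the per-block proposition overcounts. The content of $T\ast$ must be assembled from the ordered contents $\ga^{(i)}=\pocon(D\ast_{(i)})$.
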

\begin{proof}
The proof is similar to that of Theorem \ref{thm:H-in-etensor}.
\end{proof}
\section{Miscellanea and Open Questions}\label{sec:misc-open}

\subsection{$r$-decomposable partitions as dual special rim-hook tableaux of rectangular content}
A \textit{dual special rim-hook tableau} is a rim-hook tableau in which each label appears at least once in the top row. As in the case of SRHT, there is at most one dual SRHT of a given shape $\mu$ and content $\be$.
From \cite{turek, wildon} we know that each $r$-decomposable partition $\mu\in \Par(nr)$ can be uniquely associated with a dual special rim-hook tableau $T$ of shape $\mu$ and content $(r,r,\ldots, r)$. To see this correspondence, we start with a dual SRHT of shape $\mu$ and content $(r,r,\ldots, r)$. Notice that any $r$-ribbon with its cell in the first row cannot have a cell in a row lower than row $r$ as that would make the length of the ribbon greater than $r$. This shows that the length of $\mu$ must be at most $r$. As each ribbon has size $r$, it follows that $\mu$ is an $r$-decomposable partition. This shows that a dual SRHT of content $(r,r,\ldots, r)$ must have its shape an $r$-decomposable partition. To see the converse that an $r$-decomposable partition has a dual SRHT filling, note that when we remove an $r$-ribbon with a cell in the top row, the corresponding operation on the abacus is to slide the rightmost bead in the row with the highest index. The abacus $\ab_r(\mu)$ has exactly one bead per runner. If at each step, we choose to slide the rightmost bead in the row with the highest index up one row, then we can construct a unique dual SRHT with shape $\mu$ and content $(r,r,\ldots, r)$ by labeling each removed $r$-ribbons in descending order of labels. 
\begin{example}
The partition $\mu = (10,5,1)$ is 4-decomposable as we can successively remove 4-ribbons from the top row
\[\y{6,5,1,}*[*(lightgray)]{6+4,5+0,1+0,}\: \to \: \y{4,3,1,}*[*(lightgray)]{4+2,3+2,1+0,} \: \to \: \y{2,1,1,}*[*(lightgray)]{2+2,1+2,1+0,} \: \to \: \y{0,0,0,}*[*(lightgray)]{0+2,0+1,0+1,}\]
\ytableausetup{nobaseline}
The corresponding dual SRHT is 
\[
\yt{1122334444,12233,1}
\]
\end{example}

\subsection{An alternate method to compute $\sgn_r(\la)$ from the abacus}
Lemma \ref{lem:rectangular-content} tells us that to compute $\sgn_r(\la)$, we may slide the beads in any order we want. For our purposes, we go through the runners from left to right and slide the bead in each runner all the way up while recording how many beads we jump over. For a list of non-negative integers $y$, we define $\inv(y)$ same as that for a permutation, and we emphasize that equal entries do not contribute to inversions. The list $y = (0,1,2,0,1)$ has 3 inversions: $(1,3)$, $(2,3)$ and $(2,4)$.
\begin{prop}\label{prop:inversionsign}
Let $\la$ be an $r$-decomposable partition and $y$ be a list of $r$ positive integers such that for $i$ satisfying $0\leq i \leq r-1$, the bead in the $i$th runner of $\ab_r(\la)$ is in the $y_i$th row from top (0-indexed). Then $\sgn_r(\la) = (-1)^{\inv(y)}$.
\end{prop}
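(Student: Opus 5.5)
We compute $\sgn_r(\la)$ by the specific sliding order described just before the statement, and count the jumps directly. By Lemma \ref{lem:rectangular-content}, $\sgn_r(\la)$ equals the sign of any rim-hook tableau of shape $\la$ and content $(r,r,\ldots,r)$. Every such tableau corresponds to a sequence of single-row upward slides on $\ab_r(\la)$ ending at $\ab_r(\vn)$. So it suffices to exhibit one order of slides and show that the total number of beads jumped over has the parity of $\inv(y)$.

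We process the runners $i=0,1,\ldots,r-1$ in order. In runner $i$ we slide the bead from row $y_i$ up to row $0$, one row at a time. Each upward slide is a legal $r$-ribbon removal, because the position directly above is a gap: each runner carries exactly one bead. Sliding from position $qr+i$ to $(q-1)r+i$ jumps over the positions $(q-1)r+i+1,\ldots,qr+i-1$. These are the positions in row $q-1$ with column $j>i$, together with the positions in row $q$ with column $j<i$. At this stage, a runner $j<i$ has its bead in row $0$, and a runner $j>i$ still has its bead in row $y_j$. The slide therefore jumps over the bead of runner $j>i$ exactly when $y_j=q-1$, and over the bead of runner $j<i$ exactly when $q=0$. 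The latter never happens, since $q\ge 1$.

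Summing over $q=y_i,y_i-1,\ldots,1$, the bead in runner $i$ jumps over the bead of runner $j>i$ exactly when $0\le y_j\le y_i-1$, that is, when $y_j<y_i$. So the total number of jumps is
\[
\sum_{i}\#\{j>i : y_j<y_i\}=\inv(y).
\]
Equal entries contribute nothing, which matches the convention for $\inv$. Since each jump contributes a factor $-1$ to the sign of the ribbon removed, the resulting tableau $T$ has $\sgn(T)=(-1)^{\inv(y)}$. Lemma \ref{lem:rectangular-content} then gives $\sgn_r(\la)=(-1)^{\inv(y)}$.

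The step needing the most care is the bookkeeping of which intermediate positions hold beads at each moment. This depends on the chosen order: earlier runners are already at the top, later runners are still in place. We must also check that the jump count over the wrapped-around positions of row $q$ with columns less than $i$ is zero. Apart from this, the argument is a direct count. We also note that $y_i$ should be allowed to be $0$, so "positive" in the statement should read "non-negative".
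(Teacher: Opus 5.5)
Your proof is correct and is essentially the paper's argument: you slide the beads runner by runner in the order $i=0,1,\ldots,r-1$, note that earlier runners already sit in row $0$ and are never jumped, count that the bead in runner $i$ jumps exactly the beads of runners $j>i$ with $y_j<y_i$, and then invoke Lemma \ref{lem:rectangular-content}. Your explicit bookkeeping of the positions jumped in each single-row slide makes precise what the paper states briefly, and you are right that $y$ should consist of non-negative rather than positive integers, as the paper's own example $y=(0,2,0,3,1)$ shows.
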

\begin{proof}
The beads in $\ab_r(\la)$ are in positions $b_i = i + ry_i$ for $i = 0,1,\ldots, r-1$. We slide each bead in the $i$th runner in $\ab_r(\la)$ to position $i$ to obtain $\ab_r(\vn)$ in the order $i = 0,1,\ldots, r-1$. When sliding the bead in runner $i$, each bead in runner $j$ is already in position $j$ for $j = 0,1,\ldots, i-1$. So, a bead in runner $i$ when going to position $i$ can only jump over beads in runners $j > i$ which lie in a row above the bead, that is, $y_j < y_i$. Thus the total number of jumps computed over all bead slidings is equal to the number of inversions in $y$. Each jump over a bead contributes a factor of $-1$ to $\sgn_r(\la)$. The total sign $\sgn_r(\la)$ is computed as the product of signs obtained by all such inversions, and is thus equal to $(-1)^{\inv(y)}$.
\end{proof}

\begin{example}
In Example \ref{ex:r-decomp-abacus}, we have the abacus
\begin{center}\begin{tikzpicture}[shift up/.style = {transform canvas={yshift=.5mm}},shift down/.style = {transform canvas={yshift=-.5mm}},]\matrix (m) [matrix of nodes, column sep = .75cm, row sep = 0.2cm, nodes = {anchor = center}]
{0 & 1 & 2 & 3 & 4\\ 5 & 6 & 7 & 8 & 9\\ 10 & 11 & 12 & 13 & 14\\ 15 & 16 & 17 & 18 & 19\\ };
\draw (m-1-1) circle (0.3cm);
\draw (m-3-2) circle (0.3cm);
\draw (m-1-3) circle (0.3cm);
\draw (m-4-4) circle (0.3cm);
\draw (m-2-5) circle (0.3cm);
\end{tikzpicture}\end{center}
corresponding to following 5-ribbon removals
\boks{0.18}
\[
\yg{14,8,7,1}{9+5} \to \yg{9,8,7,1}{7+2,6+2,6+1} \to \yg{7,6,6,1}{5+2,5+1,4+2}\to
\yg{5,5,4,1}{0,0,4,1} \to \yg{5,5}{4+1,1+4} \to \yg{}{4,1}\to \vn.
\]
We compute $\sgn_5(\la) = (-1)^{0+2+2+1+1+1} = -1$ using the heights of the ribbons removed. The list of rows occupied in $\ab_5(\la)$ is ${y} = (0,2,0,3,1)$ which has 3 inversions: $(1,2)$, $(1,4)$ and $(3,4)$. Thus, $(-1)^{\inv(\textbf{y})} = -1$. Note that the coordinates in the inversion pairs are 0-indexed positions of entries of $y$.
\end{example}
\subsection{Sum of the entries of the input of $\hat{s}_r$ must be a multiple of $r$}
\begin{prop}\label{prop:must-be-multiple-of-r}
Let $r$ be a positive integer. If $w$ is not an $r$-weak composition of a multiple of $r$, then $\hat{s}_r(w) = 0$. Similarly, if $\mu$ is not a partition of a multiple of $r$, then  $\hat{s}_r(\mu) = 0$.
\end{prop}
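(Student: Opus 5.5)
The plan is to work directly with the definition $\hat{s}_r(w) = (-1)^{\inv((w+\de_r)\%r)}$ when $(w+\de_r)\%r$ is a permutation, and $0$ otherwise. The key observation is a congruence. Suppose $\rho = (w+\de_r)\%r$ is an $r$-permutation. Then the multiset $\{(i+w_i) \bmod r : 0\le i\le r-1\}$ is exactly $\{0,1,\ldots,r-1\}$. Summing, we get
\[
\sum_{i=0}^{r-1} (i + w_i) \equiv \sum_{i=0}^{r-1} i \pmod r .
\]
Since $\sum_i i = \sum_i (\de_r)_i$ appears on both sides, cancelling gives $|w| = \sum_i w_i \equiv 0 \pmod r$. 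Taking the contrapositive: if $|w|$ is not a multiple of $r$, then $(w+\de_r)\%r$ cannot be a permutation, so $\hat{s}_r(w)=0$ by definition. This settles the first claim.

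For the second claim, I will unwind the definitions of $\hat{s}_r$ on compositions and partitions.

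\begin{itemize}
\item Let $\mu$ be a partition of $m$ with $r\nmid m$.
\item By definition, $\hat{s}_r(\mu) = \sum_{\be} \hat{s}_r(\be)$, where $\be$ ranges over compositions sorting to $\mu$.
\item Each $\hat{s}_r(\be)$ is either $0$ (if $\ell(\be)>r$) or a sum of $\hat{s}_r(w)$ over $r$-weak compositions $w$ obtained by inserting zeros into $\be$.
\item Every such $w$ satisfies $|w| = |\be| = |\mu| = m$. Zeros do not change the sum, and sorting does not change the size.
\item By the first part, each $\hat{s}_r(w)$ vanishes. Hence every $\hat{s}_r(\be)=0$, and so $\hat{s}_r(\mu)=0$.
\end{itemize}

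The same argument shows $\hat{s}_r(\al)=0$ for any composition $\al$ with $r\nmid\area(\al)$, which is the form used in the remark after Proposition \ref{prop:r-ordered-types-exp}.

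There is no real obstacle. The only point needing care is that the residues $(i+w_i)\bmod r$ being a permutation of $\{0,\ldots,r-1\}$ gives an equality of sums modulo $r$, not an equality of integers. The congruence is all that is required, so I will make sure to state it at that level.
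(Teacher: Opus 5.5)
Your proof is correct and matches the paper's argument. Summing the residues of the permutation $(w+\delta_r)\%r$ gives $|w|+\binom{r}{2}\equiv\binom{r}{2}\pmod r$, and $\hat{s}_r(\mu)$ is a sum of values $\hat{s}_r(w)$ over $r$-weak compositions $w$ that all have size $|\mu|$, so each term vanishes by the first part.
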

\begin{proof}
Suppose the sum of entries of $w$ is $m$. The sum of entries of $\delta_r$ is $\binom{r}{2}$, and so the sum of entries of $w + \de_r$ is $m + \binom{r}{2}$. The sum of entries of $(w+\delta_r)\%r$ must be $m + \binom{r}{2}$ modulo $r$. If $(w+\delta_r)\%r$ is a permutation, or equivalently $\hat{s}_r(w) \neq 0$, then all entries of $(w+\delta_r)\%r$ are distinct and thus the sum of entries of $(w+\delta_r)\%r$ is $\binom{r}{2}$. Thus, we conclude $m$ is 0 modulo $r$, and the sum of entries of $w$ is a multiple of $r$. So by contrapositive, if the sum of entries of $w$ is not a multiple of $r$, then $(w +\de_r)\%r$ cannot be a permutation which means $\hat{s}_r(w) = 0$.

Recall that $\hat{s}_r(\mu)$ is the sum $\sum_w \hat{s}_r(w)$ over all $r$-weak compositions $w$ whose multiset of non-negative integer entries is the same as the multiset of parts of $\mu$. So when size of $\mu$ is not a multiple of $r$, then no $r$-weak composition $w$ in the index of the sum has sum of its entries a multiple of $r$. This shows that $\hat{s}_r(\mu)$ is a sum of zeros and is thus zero.
\end{proof}

\subsection{Cyclic invariance of $\hat{s}_r$}\label{ssec:cyclic-invariance}
Let $\mathbb{Z}_{\geq 0}$ be the monoid of all non-negative integers under addition. Let $\ZZ_r$ be the group $\{0,1,\ldots, r-1\}$ under addition modulo $r$ with the order $0 < 1 < \ldots < r-1$.  Define the map $F_r: \mathbb{Z}^r_{\geq 0} \to \mathbb{Z}^r_{r}$ by $F_r(w) = w + \delta_r \mod r$. For any list $w\in \mathbb{Z}_{r}^r$, define $\sgn(w) = (-1)^{\inv(w)}$ if all entries in $w$ are distinct, and zero otherwise. Thus, we can write $\hat{s}_r(w) = \sgn((w+\delta_r)\% r) = \sgn(F_r(w))$. For $w = (w_0, w_1, \ldots, w_{r-1})$, define $\rot(w) = (w_{r-1}, w_0, \ldots, w_{r-2})$ as the clockwise cyclic shift of $w$. As teased in Remark \ref{rem:multiple-of-r}, we want to show that $\hat{s}_r(\rot(w)) = \hat{s}_r(w)$. We first see how $F_r(\rot(w))$ relates to $F_r(w)$.

For two lists $v$ and $w$ with $r$ entries, we say $v\equiv_r w$, if $v_i = w_i \mod r$ for all $i = 0,1,\ldots, r-1$. Define $\vec{1}_r = (1,1, \ldots, 1)$ as a list containing $r$ 1s.
\begin{lemma}\label{lem:rot-mod-r}
    For $w\in\mathbb{Z}^r_{\geq 0} $, $F_r(\rot(w)) \equiv_r \rot(F_r(w)) + \vec{1}_r$.
\end{lemma}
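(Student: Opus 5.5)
The plan is a direct entrywise computation, since both sides are lists of length $r$ and $\equiv_r$ is checked coordinate by coordinate. Write $w=(w_0,\ldots,w_{r-1})$. By definition $\rot(w)_0 = w_{r-1}$ and $\rot(w)_i = w_{i-1}$ for $1\le i\le r-1$. Since $F_r(v)_i \equiv v_i + i \pmod r$ for any list $v$, we get
\[
F_r(\rot(w))_0 \equiv w_{r-1}, \qquad F_r(\rot(w))_i \equiv w_{i-1} + i \quad (1\le i\le r-1).
\]

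Next I compute the right-hand side. We have $F_r(w)_j \equiv w_j + j$, so $\rot(F_r(w))_0 = F_r(w)_{r-1} \equiv w_{r-1} + r-1$. For $1\le i\le r-1$, $\rot(F_r(w))_i = F_r(w)_{i-1} \equiv w_{i-1}+i-1$. Adding $\vec{1}_r$ gives entry $0$ congruent to $w_{r-1}+r \equiv w_{r-1}$, and entry $i$ congruent to $w_{i-1}+i$. These agree with the left-hand side entry by entry modulo $r$, which proves the lemma.

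The only point requiring care, and the one place one could slip, is index $0$. There the wrap-around contributes $r-1$, and adding $1$ turns it into $r\equiv 0$. One must also note that the reduction modulo $r$ inside $F_r$ commutes with $\rot$ and with adding $\vec{1}_r$ up to $\equiv_r$. Both facts are immediate because all operations are entrywise or permutations of entries. No earlier results from the paper are needed.
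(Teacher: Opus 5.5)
Your proposal is correct and follows the paper's proof essentially verbatim: both compare the two sides coordinate by coordinate. In both, the only special step is index $0$, where $w_{r-1}+(r-1)+1\equiv w_{r-1}$; every other index gives $w_{i-1}+i$ on both sides.
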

\begin{proof}
    We find the $i$th terms of both $F_r(\rot(w))$ and $\rot(F_r(w)) + \vec{1}_r$, and show that they are equivalent modulo $r$. The zeroth term in $\rot(w)$ is $w_{r-1}$ and thus the zeroth term in $\rot(w) + \de_r$ is also $w_{r-1}$. So the zeroth term in $F_r(\rot(w))$ is $w_{r-1} \mod r$ while the $i$th term in $F_r(\rot(w))$ for $i = 1,2,\ldots, r-1$ is $w_{i-1} + i \mod r$.

    The $(r-1)$st term in $F_r(w)$ is $w_{r-1} + (r-1) \mod r$ which becomes the zeroth term of $\rot(F_r(w))$. If we add 1 to this term, we obtain $w_{r-1} + r - 1 + 1\mod r$ which is $w_{r-1}\mod r$. Thus the zeroth terms of $F_r(\rot(w))$ and $\rot(F_r(w)) + \vec{1}_r$ match modulo $r$.
    
  For $i\geq 1$, the $i$th term in $F_r(w)$ is $w_{i} + i \mod r$ and so the $i$th term in $\rot(F_r(w))$ is $w_{i-1} + (i-1) \mod r$. Adding one to this term gives $w_{i-1} + i\mod r$ which is the $i$th term of $F_r(\rot(w))$ as previously mentioned.
\end{proof}
In order to find $\hat{s}_r(\rot(w))$, it is sufficient for us to find how the sign changes when a list is cyclically shifted and when $\vec{1}_r$ is added to it. Recall that when $v\in \ZZ_r^r$, then $\sgn(v) = 0$ if and only if $v$ is not an $r$-permutation. We use the term permutation instead of $r$-permutation in the upcoming proof as $r$ is understood.
\begin{lemma}\label{lem:sgn-rot+1}
    Let $v\in\mathbb{Z}_r^r$.
    \begin{enumerate}
    \item If $v$ is not a permutation, or equivalently $\sgn(v) = 0$, then $\sgn(\rot(v)) = 0$ and $\sgn(v + \vec{1}) = 0$
        \item If $v$ is a permutation of $\{0,1,\ldots, r-1\}$, or equivalently $\sgn(v) \neq 0$, then
        \begin{enumerate}
            \item $\sgn(\rot(v)) = (-1)^{2k-r + 1}\sgn(v) $, where $k = v_{r-1}$, i.e., the last entry of $v$.
            \item $\sgn(v + \vec{1}_r) =(-1)^{2l - r + 1} \sgn(v)$, where $l$ is the (zero indexed) position of $r-1$ in $v$.
        \end{enumerate}
        So, $\sgn(\rot(v)) = \sgn(v + \vec{1}_r) = (-1)^{r-1}\sgn(v)$.
    \end{enumerate}
\end{lemma}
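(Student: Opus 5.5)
Part (1) is immediate, and I will dispose of it first. If $v$ is not a permutation, it has two equal entries $v_i=v_j$ with $i\ne j$. Rotation only permutes positions, so $\rot(v)$ still has two equal entries. Adding $\vec{1}_r$ modulo $r$ is a bijection on $\ZZ_r$, so $v+\vec{1}_r$ also has two equal entries. Hence both signs are $0$. The same two observations show that if $v$ is a permutation, then so are $\rot(v)$ and $v+\vec{1}_r$, so in part (2) all signs are $\pm1$ and it suffices to count inversions.

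For (2)(a), write $v=(v_0,\dots,v_{r-1})$ with $k=v_{r-1}$, so that $\rot(v)=(k,v_0,\dots,v_{r-2})$. Pairs not involving $k$ keep their relative order, so they contribute the same inversions in $v$ and in $\rot(v)$. In $v$, the entry $k$ sits last and forms inversions with the entries to its left that exceed it; there are $r-1-k$ of these, since the values larger than $k$ are exactly $k+1,\dots,r-1$. In $\rot(v)$, $k$ sits first and forms inversions with the $k$ smaller values $0,\dots,k-1$. So $\inv(\rot(v))-\inv(v)=k-(r-1-k)=2k-r+1$, which gives $\sgn(\rot(v))=(-1)^{2k-r+1}\sgn(v)$.

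For (2)(b), let $l$ be the position of $r-1$ in $v$. Adding $\vec{1}_r$ modulo $r$ increases every entry except $r-1$ by one, which preserves their mutual order, while $r-1$ becomes $0$. So only the pairs involving position $l$ can change status. Before the shift, $r-1$ is the maximum, so it forms an inversion with each of the $r-1-l$ entries to its right and with none to its left. After the shift, $0$ is the minimum, so it forms an inversion with each of the $l$ entries to its left and with none to its right. The change is $l-(r-1-l)=2l-r+1$, as claimed.

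Since $2k$ and $2l$ are even, both exponents have the parity of $r-1$, which gives the final equality $\sgn(\rot(v))=\sgn(v+\vec{1}_r)=(-1)^{r-1}\sgn(v)$. There is no real obstacle here. The only care needed is to count inversions correctly: the larger values are exactly $\{k+1,\dots,r-1\}$ because $v$ is a permutation, and the inversion count is unaffected by reducing entries modulo $r$ everywhere except at the wrapped entry.
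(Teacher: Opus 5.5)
Your proof is correct and follows the paper's argument essentially step for step. Part (1) uses a repeated entry that survives both rotation and the shift by $\vec{1}_r$. Part (2) notes that only pairs involving the moved entry ($k$ under $\rot$, $r-1$ under the shift) change status, with the same counts $r-1-k$ versus $k$ and $r-1-l$ versus $l$; your explicit remark that both operations preserve the permutation property is a small clarification the paper leaves implicit.
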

\begin{proof}
    \begin{enumerate}
        \item Suppose $v$ is not a permutation. There exist $i,j$ satisfying $0\leq i < j \leq r-1$ such that $v_i = v_j$ . For $\rot(v)$, we have $\rot(v)_{i+1} = \rot(v)_{j+1}$ where the indices are understood modulo $r$. Thus, $\rot(v)$ is also not a permutation and $\sgn(\rot(v)) = 0$. On the other hand, $(v+\vec{1}_r)_i = (v+\vec{1}_r)_j$ as $v_i + 1 = v_j + 1$. Thus, $v + \vec{1}_r$ is not a permutation and $\sgn(v + \vec{1}_r) = 0$.

        \item Now assume $v$ is a permutation and so $\sgn(v) \neq 0$. In this case $\sgn(v) = (-1)^{\inv(v)}$, so we need to see how the two operations affect the inversions.
        \begin{enumerate}
            \item If $v_{r-1} = k$, then $\rot(v)_0 = k$. For $i$ and $j$ satisfying $0\leq i < j \leq r-2$, $(i,j)$ is an inversion in $v$ if and only if $(i+1,j+1)$ is an inversion $\rot(v)$. So, the number of inversions in $\rot(v)$ differs from the number of inversions in $v$ only due to the last entry $k$ cycling to the zeroth position. For an entry $k$, there are $r-1-k$ entries greater than it and $k$ entries smaller than it. Thus, $v$ has $r-1-k$ inversions of the form $(i,r-1)$ where $i = 0,1,\ldots, r-2$, while $\rot(v)$ has $k$ inversions of the form $(0,j)$ for $j = 1,2,\ldots, r-1$. This yields the following relation 
            \[
            \frac{\sgn(\rot(v))}{\sgn(v)} = \frac{(-1)^{k}}{(-1)^{r-1-k}}
            \]
            \item Now we look at how the inversions change when we go from $v$ to $v + \vec{1}_r$. For $0\leq i < j \leq r-1$, $(i,j)$ is an inversion in $v$ if and only if $(i,j)$ is an inversion in $v + \vec{1}_r$ except when either $v_i$ or $v_j$ is equal to $r-1$. Suppose for $l$ between $0$ and $r-1$, we have $v_l = r-1$. Then $(v+\vec{1}_r)_l = 0$. There are $r- 1 - l$ entries to the right of position $l$, thus there are $r-1-l$ inversions of the form $(l,j)$ for $j = l+1, l+2, \ldots, r-1$ in $v$. On the other hand, there are $l$ inversions of the form $(i,l)$ in $v + \vec{1}_r$ for $i = 0,1,\ldots, l-1$. Thus,
\[
\frac{\sgn(v + \vec{1}_r)}{\sgn(v)} = \frac{(-1)^{r-l-1}}{(-1)^{l}}.
\] 
        \end{enumerate}
    \end{enumerate}
\end{proof}
Combining the above lemmas gives us our result.
\begin{prop}\label{prop:cyclic-invariance}
    Let $w\in \mathbb{Z}_{\geq 0}^r$, then $\hat{s}_r(\rot(w)) = \hat{s}_r(w)$.
\end{prop}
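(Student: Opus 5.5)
The plan is to combine Lemma \ref{lem:rot-mod-r} with Lemma \ref{lem:sgn-rot+1}, working entirely at the level of the list $F_r(w)\in\ZZ_r^r$. Write $v = F_r(w)$, so that $\hat{s}_r(w) = \sgn(v)$. By Lemma \ref{lem:rot-mod-r}, $F_r(\rot(w)) \equiv_r \rot(v) + \vec{1}_r$. Since both sides are read in $\ZZ_r$, this congruence is an equality of lists in $\ZZ_r^r$ once addition of $\vec{1}_r$ is taken modulo $r$. That convention is the one already used in Lemma \ref{lem:sgn-rot+1}, where $r-1$ becomes $0$. Hence
\[
\hat{s}_r(\rot(w)) = \sgn(F_r(\rot(w))) = \sgn(\rot(v)+\vec{1}_r).
\]

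I then split into two cases according to whether $v$ is a permutation. If $v$ is not a permutation, part (1) of Lemma \ref{lem:sgn-rot+1} gives $\sgn(\rot(v))=0$, so $\rot(v)$ is not a permutation. Applying part (1) again to $\rot(v)$ gives $\sgn(\rot(v)+\vec{1}_r)=0$. Thus $\hat{s}_r(\rot(w)) = 0 = \hat{s}_r(w)$.

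If $v$ is a permutation, then $\rot(v)$ is also a permutation. Applying part (2) twice gives
\[
\sgn(\rot(v)+\vec{1}_r) = (-1)^{r-1}\sgn(\rot(v)) = (-1)^{2(r-1)}\sgn(v) = \sgn(v).
\]
This is exactly $\hat{s}_r(\rot(w)) = \hat{s}_r(w)$.

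The only step needing care is the final line of Lemma \ref{lem:sgn-rot+1}, where the exponents $2k-r+1$ and $2l-r+1$ both reduce to $r-1$ modulo $2$. This holds because $2k$ and $2l$ are even, so the specific values of $k$ and $l$ never matter. I should also check that the relation in Lemma \ref{lem:rot-mod-r} is used exactly modulo $r$, since entries of $F_r$ are reduced remainders. With that checked, no further obstacle remains.
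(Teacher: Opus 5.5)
Your proposal is correct and is essentially the paper's own proof. Both rewrite $\hat{s}_r(\rot(w))$ as $\sgn(\rot(F_r(w))+\vec{1}_r)$ via Lemma~\ref{lem:rot-mod-r}, then apply part (1) of Lemma~\ref{lem:sgn-rot+1} twice when $F_r(w)$ is not a permutation and part (2) twice otherwise, so that the two factors of $(-1)^{r-1}$ cancel; your observation that $2k-r+1$ and $2l-r+1$ are both congruent to $r-1$ modulo $2$ is exactly what makes the particular values of $k$ and $l$ irrelevant.
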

\begin{proof}
For all $w\in \ZZ_{\geq 0}^r$, $\hat{s}_r(w) = \sgn(F_r(w))$. It follows that $\hat{s}_r(\rot(w)) = \sgn(F_r(\rot(w)))$, which by Lemma \ref{lem:rot-mod-r} is equal to $\sgn(\rot(F_r(w)) + \vec{1}_r)$ where the entries of the input are considered modulo $r$.

 If $\hat{s}_r(w) = \sgn(F_r(w)) =0$, then $\sgn(\rot(F_r(w))) = 0$ by (1) of Lemma \ref{lem:sgn-rot+1} and by the same lemma we have $\sgn(\rot(F_r(w))+\vec{1}_r) = 0$, which shows $\hat{s}_r(\rot(w)) = \hat{s}_r(w) = 0$.

    Now suppose $\hat{s}_r(w)\neq 0$. By Lemma \ref{lem:sgn-rot+1}(2), \[
    \sgn(\rot(F_r(w)) + \vec{1}_r) = (-1)^{r-1}\sgn(\rot(F_r(w))
    \]
    and again by Lemma \ref{lem:sgn-rot+1}(2)
    \[
    \sgn(\rot(F_r(w)) = (-1)^{r-1}\sgn(F_r(w)).
    \]
    This shows that $\hat{s}_r(\rot(w)) = (-1)^{r-1}(-1)^{r-1}\hat{s}_r(w) = \hat{s}_r(w)$.
\end{proof}
\begin{corollary}
Let $n$ and $r$ be positive integers such that $r$ is prime. For any partition $\mu$ of $nr$ not equal to $(n,n,\ldots, n)$, the coefficient of $h_\mu$ in the $h$-expansion of $h_n[p_r]$ is divisible by $r$.
\end{corollary}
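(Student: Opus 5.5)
The plan is a group-action argument. Let the cyclic group $\ZZ_r=\langle\rot\rangle$ act on $r$-weak compositions, and use Proposition \ref{prop:cyclic-invariance} to see that $\hat{s}_r$ is constant on each orbit. Then count orbit sizes using the primality of $r$.

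First, rewrite the coefficient of $h_\mu$. By Corollary \ref{cor:h-expansions}, the coefficient of $h_\mu$ is
\[
\hat{s}_r(\mu)=\sum_{w}\hat{s}_r(w),
\]
where $w$ ranges over all $r$-weak compositions whose multiset of positive entries equals the multiset of parts of $\mu$. If $\ell(\mu)>r$ this set is empty and the coefficient is $0$, which is divisible by $r$. Otherwise the index set $W_\mu$ is exactly the set of distinct rearrangements of the $r$-tuple obtained by padding $\mu$ with $r-\ell(\mu)$ zeros. This set $W_\mu$ is closed under $\rot$, so it splits into $\rot$-orbits. By Proposition \ref{prop:cyclic-invariance}, $\hat{s}_r$ is constant on each orbit. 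Hence
\[
\hat{s}_r(\mu)=\sum_{O}|O|\,\hat{s}_r(w_O),
\]
where $O$ runs over the orbits and $w_O$ is any representative of $O$.

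Second, since $r$ is prime, every orbit has size $1$ or $r$. An orbit of size $1$ consists of a $w$ fixed by $\rot$, which means all entries of $w$ are equal, say $w=(c,c,\ldots,c)$. Its nonzero entries sort to $\mu$, so either $c=0$ or $\mu=(c,\ldots,c)$ with $r$ parts. The case $c=0$ is impossible because $|\mu|=nr>0$. The second case forces $rc=nr$, so $c=n$ and $\mu=(n,n,\ldots,n)$, which the hypothesis excludes. Therefore every orbit has size $r$, and $\hat{s}_r(\mu)=r\sum_O\hat{s}_r(w_O)$ is divisible by $r$.

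The only real content is the cyclic invariance, which Proposition \ref{prop:cyclic-invariance} already supplies. The remaining care needed is to confirm that the index set in the definition of $\hat{s}_r(\mu)$ is genuinely the set of distinct $r$-tuple rearrangements: passing through compositions $\be$ that sort to $\mu$ and then through $r$-weak compositions giving $\be$ counts each such $w$ exactly once. This should be checked explicitly, but it is routine.
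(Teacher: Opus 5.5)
Your proof is correct and follows the paper's argument. You write the coefficient as $\hat{s}_r(\mu)=\sum_w \hat{s}_r(w)$, use Proposition~\ref{prop:cyclic-invariance} to make $\hat{s}_r$ constant on $\rot$-orbits, and use the primality of $r$ to force orbits of size $r$. Your explicit check that the only size-one orbit is $\{(n,n,\ldots,n)\}$ is more careful than the paper. The paper asserts that every orbit in $\WCom_r(nr)$ has size $r$, which is false for $(n,n,\ldots,n)$, so it uses the hypothesis $\mu\neq(n,\ldots,n)$ only implicitly.
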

\begin{proof}
The coefficient of $h_\mu$ in the $h$-expansion of $h_n[p_r]$ is \[
\hat{s}_r(\mu)= \sum_w \hat{s}_r(w),\quad (*)
\] where the sum is over all $r$-weak compositions $w$ of $nr$ whose multiset of positive parts is the multiset of parts of $\mu$. Define $\text{Rot}(w) = \{\rot^n(w):n\geq 0\}$ as the orbit of $w$ under $\rot$. By Proposition \ref{prop:cyclic-invariance}, all $v\in \text{Rot}(w)$ satisfy $\hat{s}_r(v) = \hat{s}_r(w)$. 
As $r$ is prime, $|\text{Rot}(w)|$ is equal to $r$ for all $w\in \WCom_r(nr)$. If $\rot^\dagger$ is the transversal set of $\rot$, then $\hat{s}_r(\mu) = r\sum_{w\in \rot^\dagger} \hat{s}_r(w)$. This establishes our claim. 
\end{proof}
\begin{remark}
It may seem that the above claim might have exceptions for other rectangular partitions of the form $\mu = (a,a,\ldots, a)$ where $a$ divides $n$. But while computing $\hat{s}_r(\mu)$, we pad the partition with extra zeros which makes the orbit of $\mu$ under cyclic shift non-trivial and of size $r$.
\end{remark}

\subsection{Open Questions}
Viewing $\hat{s}_r$ as a statistic over the set of $r$-weak compositions of $n$, $\WCom_r(n)$, raises some interesting questions.
\begin{openproblem}
When the composition $\alpha$ refines $\be$, what is the relationship between $\hat{s}_r(\al)$ and $\hat{s}_r(\be)$? What is the relationship between $\hat{s}_r(\al)$ and $\sum_{\beta \geq \alpha} \hat{s}_r(\be)$?
\end{openproblem}
The answer to the latter question would simplify the expressions for $\psgn\ast(D\ast)$ for labeled polywrapping block tabloids.

\begin{openproblem}
What are the algebraic combinatorial dynamics of the statistic $\hat{s}_r$ on $\WCom_r(nr)$?
\end{openproblem}
For a partition $\mu$ of $nr$, define $R_r(\mu)$ to be the set of $w\in \WCom_r(nr)$ such that the multiset of positive parts of $w$ is the same as the multiset of parts of $\mu$. One may compute the quantity $\frac{\hat{s}_r(\mu)}{R_\mu}$ for partitions to study whether $\hat{s}_r(\mu)$ is homomesic. A statistic is \textit{homomesic} if its average value is the same over each orbit, and so in our case, we care whether $\frac{\hat{s}_r(\mu)}{R_\mu}$ is independent of the partition we choose. The statistic $\hat{s}_r$ is not homomesic over $\WCom_r(nr)$ as $\frac{\hat{s}_3((6,3,3))}{R_3((6,3,3))} = 1$ while $\frac{\hat{s}_3((6,5,1))}{R_3((6,3,3))} = \frac{-1}{2}$. We may now ask what the set of values taken by $\frac{\hat{s}_r(\mu)}{R_r(\mu)}$ is. The following table summarizes some of the data.

\begin{table}[H]
\centering
\begin{tabular}{|c|l|}
\hline
$r$ & Values attained by $\frac{\hat{s}_r(\mu)}{R_r(\mu)}$ for $\mu\in \Par$\\
\hline
2 & $\{-1 , 1 , 0\}$\\
3 & $\{-\frac{1}{2} , 1 , 0\}$\\
4 & $\{0 , -\frac{1}{3} , \frac{1}{3} , -1 , 1\}$\\[0.06cm]
5 & $\{0 , \frac{1}{6} , -\frac{1}{4} , -\frac{1}{24} , 1\}$\\[0.05cm]
6 & $\{-\frac{1}{10} , 0 , -\frac{1}{5} , \frac{1}{5} , \frac{1}{10} , -1 , 1\}$\\[0.06cm]
7 & $\{0 , -\frac{1}{6} , \frac{1}{15} , \frac{1}{36} , -\frac{1}{90} , \frac{1}{120} , 1 , -\frac{1}{20}\}$\\[0.06cm]
\hline
\end{tabular}
\end{table}

\begin{openproblem}
Is there a shorter proof of the claim in Section \ref{ssec:cyclic-invariance}?
\end{openproblem}
We prove the invariance of the statistic $\hat{s}_r: \WCom_r(nr)\to \ZZ$ under cyclic shift by analyzing the change of sign under two operations: addition of $\vec{1}_r$ and rotation of the list. The former may be viewed as a cycling of values of the list, while the latter cycles positions in the list. The effects of both these operations cancel out to give us the cyclic invariance of $\hat{s}_r$. We believe there is an alternate proof of this claim that requires less work. As the statistic arises from abaci, one could also try to understand $\hat{s}_r$ through that perspective.

\section{Example transition matrices for $n = 4$}\label{sec:matrices}
We index the rows and columns of the transition matrices with types of $n = 4$. We remove the commas in the partitions in multiplicities for brevity. We read the expansion coefficient of $G_\tau$ in the $G$-expansion of $F_\si$ as the entry in row $\tau$ and column $\si$ of $\mcM_4(F,G)$. For example,
\[
P_{3^1 1^1} = 3 h^\ox_{1^{31}} - 3h^\ox_{1^{211}} + h^\ox_{1^{1111}} + 3h^\ox_{3^1 1^1}.
\]
\begin{small}
\[
\mcM_4(P,h^\ox) = \bbmatrix{
~ & 1^4 & 1^{31} & 1^{22} & 1^{211} & 1^{1111} & 2^11^2 & 2^11^{11} & 3^1 1^1 & 2^2 & 2^{11} & 4^1\cr
1^4 & 4 & 0 & 0 & 0 & 0 & 0 & 0 & 0 & 4 & 0 & 4 \cr
1^{31} & -4 & 3 & 0 & 0 & 0 & 0 & 0 & 3 & -4 & 0 & -4 \cr
1^{22} & -2 & 0 & 4 & 0 & 0 & 4 & 0 & 0 & -2 & 4 & -2 \cr
1^{211} & 4 & -3 & -4 & 2 & 0 & -4 & 2 & -3 & 4 & -4 & 4 \cr
1^{1111} & -1 & 1 & 1 & -1 & 1 & 1 & -1 & 1 & -1 & 1 & -1 \cr
2^11^2 & 0 & 0 & 0 & 0 & 0 & 4 & 0 & 0 & 0 & 8 & 0 \cr
2^11^{11} & 0 & 0 & 0 & 0 & 0 & -2 & 2 & 0 & 0 & -4 & 0 \cr
3^11^1 & 0 & 0 & 0 & 0 & 0 & 0 & 0 & 3 & 0 & 0 & 0 \cr
2^2 & 0 & 0 & 0 & 0 & 0 & 0 & 0 & 0 & 4 & 0 & 4 \cr
2^{11} & 0 & 0 & 0 & 0 & 0 & 0 & 0 & 0 & -2 & 4 & -2 \cr
4^1 & 0 & 0 & 0 & 0 & 0 & 0 & 0 & 0 & 0 & 0 & 4 \cr
}
\]
\end{small}
\begin{small}
\[
\mcM_4(P,e^\ox) = \bbmatrix{
~ & 1^4 & 1^{31} & 1^{22} & 1^{211} & 1^{1111} & 2^11^2 & 2^11^{11} & 3^1 1^1 & 2^2 & 2^{11} & 4^1\cr
1^4 & -4 & 0 & 0 & 0 & 0 & 0 & 0 & 0 & -4 & 0 & -4 \cr
1^{31} & 4 & 3 & 0 & 0 & 0 & 0 & 0 & 3 & 4 & 0 & 4 \cr
1^{22} & 2 & 0 & 4 & 0 & 0 & 4 & 0 & 0 & 2 & 4 & 2 \cr
1^{211} & -4 & -3 & -4 & -2 & 0 & -4 & -2 & -3 & -4 & -4 & -4 \cr
1^{1111} & 1 & 1 & 1 & 1 & 1 & 1 & 1 & 1 & 1 & 1 & 1 \cr
2^11^2 & 0 & 0 & 0 & 0 & 0 & -4 & 0 & 0 & 0 & -8 & 0 \cr
2^11^{11} & 0 & 0 & 0 & 0 & 0 & 2 & 2 & 0 & 0 & 4 & 0 \cr
3^11^1 & 0 & 0 & 0 & 0 & 0 & 0 & 0 & 3 & 0 & 0 & 0 \cr
2^2 & 0 & 0 & 0 & 0 & 0 & 0 & 0 & 0 & -4 & 0 & -4 \cr
2^{11} & 0 & 0 & 0 & 0 & 0 & 0 & 0 & 0 & 2 & 4 & 2 \cr
4^1 & 0 & 0 & 0 & 0 & 0 & 0 & 0 & 0 & 0 & 0 & 4 \cr
}
\]
\end{small}
\begin{small}
\[
\mcM_4(H,h^\ox) = \bbmatrix{
~ & 1^4 & 1^{31} & 1^{22} & 1^{211} & 1^{1111} & 2^11^2 & 2^11^{11} & 3^1 1^1 & 2^2 & 2^{11} & 4^1\cr
1^4 & 4 & 0 & 0 & 0 & 0 & 0 & 0 & 0 & 2 & 0 & 1 \cr
1^{31} & -4 & 3 & 0 & 0 & 0 & 0 & 0 & 1 & -2 & 0 & 0 \cr
1^{22} & -2 & 0 & 4 & 0 & 0 & 2 & 0 & 0 & 1 & 1 & 0 \cr
1^{211} & 4 & -3 & -4 & 2 & 0 & -1 & 1 & 0 & 0 & 0 & 0 \cr
1^{1111} & -1 & 1 & 1 & -1 & 1 & 0 & 0 & 0 & 0 & 0 & 0 \cr
2^11^2 & 0 & 0 & 0 & 0 & 0 & 2 & 0 & 0 & 0 & 2 & 1 \cr
2^11^{11} & 0 & 0 & 0 & 0 & 0 & -1 & 1 & 1 & 0 & 0 & 0 \cr
3^11^1 & 0 & 0 & 0 & 0 & 0 & 0 & 0 & 1 & 0 & 0 & 1 \cr
2^2 & 0 & 0 & 0 & 0 & 0 & 0 & 0 & 0 & 2 & 0 & 1 \cr
2^{11} & 0 & 0 & 0 & 0 & 0 & 0 & 0 & 0 & -1 & 1 & 0 \cr
4^1 & 0 & 0 & 0 & 0 & 0 & 0 & 0 & 0 & 0 & 0 & 1 \cr
}
\]
\end{small}
\begin{small}
\[
\mcM_4(H,e^\ox) = \bbmatrix{
~ & 1^4 & 1^{31} & 1^{22} & 1^{211} & 1^{1111} & 2^11^2 & 2^11^{11} & 3^1 1^1 & 2^2 & 2^{11} & 4^1\cr
1^4 & -4 & 0 & 0 & 0 & 0 & 0 & 0 & 0 & -2 & 0 & -1 \cr
1^{31} & 4 & 3 & 0 & 0 & 0 & 0 & 0 & 1 & 2 & 0 & 2 \cr
1^{22} & 2 & 0 & 4 & 0 & 0 & 2 & 0 & 0 & 3 & 1 & 1 \cr
1^{211} & -4 & -3 & -4 & -2 & 0 & -3 & -1 & -2 & -4 & -2 & -3 \cr
1^{1111} & 1 & 1 & 1 & 1 & 1 & 1 & 1 & 1 & 1 & 1 & 1 \cr
2^11^2 & 0 & 0 & 0 & 0 & 0 & -2 & 0 & 0 & 0 & -2 & -1 \cr
2^11^{11} & 0 & 0 & 0 & 0 & 0 & 1 & 1 & 1 & 0 & 2 & 1 \cr
3^11^1 & 0 & 0 & 0 & 0 & 0 & 0 & 0 & 1 & 0 & 0 & 1 \cr
2^2 & 0 & 0 & 0 & 0 & 0 & 0 & 0 & 0 & -2 & 0 & -1 \cr
2^{11} & 0 & 0 & 0 & 0 & 0 & 0 & 0 & 0 & 1 & 1 & 1 \cr
4^1 & 0 & 0 & 0 & 0 & 0 & 0 & 0 & 0 & 0 & 0 & 1 \cr
}
\]
\end{small}
\begin{small}
\[
\mcM_4(E,h^\ox) = \bbmatrix{
~ & 1^4 & 1^{31} & 1^{22} & 1^{211} & 1^{1111} & 2^11^2 & 2^11^{11} & 3^1 1^1 & 2^2 & 2^{11} & 4^1\cr
1^4 & -4 & 0 & 0 & 0 & 0 & 0 & 0 & 0 & -2 & 0 & -1 \cr
1^{31} & 4 & 3 & 0 & 0 & 0 & 0 & 0 & 1 & 2 & 0 & 2 \cr
1^{22} & 2 & 0 & 4 & 0 & 0 & 2 & 0 & 0 & 3 & 1 & 1 \cr
1^{211} & -4 & -3 & -4 & -2 & 0 & -3 & -1 & -2 & -4 & -2 & -3 \cr
1^{1111} & 1 & 1 & 1 & 1 & 1 & 1 & 1 & 1 & 1 & 1 & 1 \cr
2^11^2 & 0 & 0 & 0 & 0 & 0 & 2 & 0 & 0 & 0 & 2 & 1 \cr
2^11^{11} & 0 & 0 & 0 & 0 & 0 & -1 & -1 & -1 & 0 & -2 & -1 \cr
3^11^1 & 0 & 0 & 0 & 0 & 0 & 0 & 0 & 1 & 0 & 0 & 1 \cr
2^2 & 0 & 0 & 0 & 0 & 0 & 0 & 0 & 0 & -2 & 0 & -1 \cr
2^{11} & 0 & 0 & 0 & 0 & 0 & 0 & 0 & 0 & 1 & 1 & 1 \cr
4^1 & 0 & 0 & 0 & 0 & 0 & 0 & 0 & 0 & 0 & 0 & -1 \cr
}
\]
\end{small}
\begin{small}
\[
\mcM_4(E,e^\ox) = \bbmatrix{
~ & 1^4 & 1^{31} & 1^{22} & 1^{211} & 1^{1111} & 2^11^2 & 2^11^{11} & 3^1 1^1 & 2^2 & 2^{11} & 4^1\cr
1^4 & 4 & 0 & 0 & 0 & 0 & 0 & 0 & 0 & 2 & 0 & 1 \cr
1^{31} & -4 & 3 & 0 & 0 & 0 & 0 & 0 & 1 & -2 & 0 & 0 \cr
1^{22} & -2 & 0 & 4 & 0 & 0 & 2 & 0 & 0 & 1 & 1 & 0 \cr
1^{211} & 4 & -3 & -4 & 2 & 0 & -1 & 1 & 0 & 0 & 0 & 0 \cr
1^{1111} & -1 & 1 & 1 & -1 & 1 & 0 & 0 & 0 & 0 & 0 & 0 \cr
2^11^2 & 0 & 0 & 0 & 0 & 0 & -2 & 0 & 0 & 0 & -2 & -1 \cr
2^11^{11} & 0 & 0 & 0 & 0 & 0 & 1 & -1 & -1 & 0 & 0 & 0 \cr
3^11^1 & 0 & 0 & 0 & 0 & 0 & 0 & 0 & 1 & 0 & 0 & 1 \cr
2^2 & 0 & 0 & 0 & 0 & 0 & 0 & 0 & 0 & 2 & 0 & 1 \cr
2^{11} & 0 & 0 & 0 & 0 & 0 & 0 & 0 & 0 & -1 & 1 & 0 \cr
4^1 & 0 & 0 & 0 & 0 & 0 & 0 & 0 & 0 & 0 & 0 & -1 \cr
}
\]
\end{small}
\begin{small}
\[
\mcM_4(E^+,h^\ox) = \bbmatrix{
~ & 1^4 & 1^{31} & 1^{22} & 1^{211} & 1^{1111} & 2^11^2 & 2^11^{11} & 3^1 1^1 & 2^2 & 2^{11} & 4^1\cr
1^4 & 4 & 0 & 0 & 0 & 0 & 0 & 0 & 0 & -2 & 0 & -1 \cr
1^{31} & -4 & 3 & 0 & 0 & 0 & 0 & 0 & 1 & 2 & 0 & 2 \cr
1^{22} & -2 & 0 & 4 & 0 & 0 & -2 & 0 & 0 & 3 & 1 & 1 \cr
1^{211} & 4 & -3 & -4 & 2 & 0 & 3 & -1 & -2 & -4 & -2 & -3 \cr
1^{1111} & -1 & 1 & 1 & -1 & 1 & -1 & 1 & 1 & 1 & 1 & 1 \cr
2^11^2 & 0 & 0 & 0 & 0 & 0 & 2 & 0 & 0 & 0 & -2 & -1 \cr
2^11^{11} & 0 & 0 & 0 & 0 & 0 & -1 & 1 & 1 & 0 & 2 & 1 \cr
3^11^1 & 0 & 0 & 0 & 0 & 0 & 0 & 0 & 1 & 0 & 0 & 1 \cr
2^2 & 0 & 0 & 0 & 0 & 0 & 0 & 0 & 0 & 2 & 0 & -1 \cr
2^{11} & 0 & 0 & 0 & 0 & 0 & 0 & 0 & 0 & -1 & 1 & 1 \cr
4^1 & 0 & 0 & 0 & 0 & 0 & 0 & 0 & 0 & 0 & 0 & 1 \cr
}
\]
\end{small}
\vspace{1cm}
\begin{small}
\[
\mcM_4(E^+,e^\ox) = \bbmatrix{
~ & 1^4 & 1^{31} & 1^{22} & 1^{211} & 1^{1111} & 2^11^2 & 2^11^{11} & 3^1 1^1 & 2^2 & 2^{11} & 4^1\cr
1^4 & -4 & 0 & 0 & 0 & 0 & 0 & 0 & 0 & 2 & 0 & 1 \cr
1^{31} & 4 & 3 & 0 & 0 & 0 & 0 & 0 & 1 & -2 & 0 & 0 \cr
1^{22} & 2 & 0 & 4 & 0 & 0 & -2 & 0 & 0 & 1 & 1 & 0 \cr
1^{211} & -4 & -3 & -4 & -2 & 0 & 1 & 1 & 0 & 0 & 0 & 0 \cr
1^{1111} & 1 & 1 & 1 & 1 & 1 & 0 & 0 & 0 & 0 & 0 & 0 \cr
2^11^2 & 0 & 0 & 0 & 0 & 0 & -2 & 0 & 0 & 0 & 2 & 1 \cr
2^11^{11} & 0 & 0 & 0 & 0 & 0 & 1 & 1 & 1 & 0 & 0 & 0 \cr
3^11^1 & 0 & 0 & 0 & 0 & 0 & 0 & 0 & 1 & 0 & 0 & 1 \cr
2^2 & 0 & 0 & 0 & 0 & 0 & 0 & 0 & 0 & -2 & 0 & 1 \cr
2^{11} & 0 & 0 & 0 & 0 & 0 & 0 & 0 & 0 & 1 & 1 & 0 \cr
4^1 & 0 & 0 & 0 & 0 & 0 & 0 & 0 & 0 & 0 & 0 & 1 \cr
}
\]
\end{small}

\end{document}